\def\WW{\mathcal W}
\def\RR{\mathbb R}
\def\NN{\mathbb N}
\def\eps{\varepsilon}
\def\supp{\mathrm{supp}}
\def\d{\mathrm{d}}
\def\dv{\mathrm{div}}
\def\de{\partial}
\newcommand{\nm}[1]{\left\| #1 \right\|}
\def\size{0.35\textwidth}
\theoremstyle{definition}
\newtheorem{defi}{Definition}
\newtheorem{hyp}{Assumption}
\newtheorem{rem}{Remark}
\newtheorem*{rem*}{Remark}
\theoremstyle{plain}
\newtheorem{lem}{Lemma}
\newtheorem{theor}[lem]{Theorem}
\newtheorem{cor}[lem]{Corollary}
\newtheorem*{conj*}{Conjecture}
\journal{Nonlinear Analysis}
\begin{document}

\begin{frontmatter}

\title{The role of a strong confining potential in a nonlinear
Fokker--Planck equation}

\author{Luca Alasio}
\ead{luca.alasio@gssi.it}

\address{Gran Sasso Science Institute, L'Aquila 67100, Italy.}

\author{Maria Bruna}
\ead{bruna@maths.ox.ac.uk}

\address{Mathematical Institute, University of Oxford, Oxford OX2 6GG, United Kingdom.}

\author{Jos\'e Antonio Carrillo}
\address{Department of Mathematics, Imperial College London, London SW7 2AZ, United Kingdom.}
\ead{carrillo@imperial.ac.uk}

\begin{abstract}
We show that solutions of nonlinear nonlocal Fokker--Planck equations in a bounded domain with no-flux boundary conditions can be approximated by Cauchy problems with increasingly strong confining potentials defined in the whole space. Two different approaches are analyzed, making crucial use of uniform estimates for $L^2$ energy functionals and free energy (or entropy) functionals respectively. In both cases, we prove that the weak formulation of the problem in a bounded domain can be obtained as the weak formulation of a limit problem in the whole space involving a suitably chosen sequence of large confining potentials. The free energy approach extends to  the case degenerate diffusion.
\end{abstract}

\begin{keyword}
Fokker--Planck equations \sep nonlocal models \sep strong confinement \sep degenerate diffusion

\MSC 35B30 \sep 35K61 \sep 35Q84

\end{keyword}

\end{frontmatter}


\section{Introduction} \label{sec:intro}

In this paper, we consider a nonlinear Fokker--Planck equation of the form 
\begin{align}
	\label{eq_bounded}
	\begin{aligned}
	\de_{t}u &=  \dv\left[ \nabla\phi(u) + u\nabla V_0 + u\nabla (W*u) \right], \quad  x\in \Omega, t>0, \\
	\end{aligned}
\end{align}
where $u(x, t) \ge 0$ satisfies no-flux boundary conditions on a bounded and connected domain $\Omega \subset \mathbb R^d$ of class $C^2$, for dimension $d\geq 1$ in $\NN$, and a suitable initial condition that we will specify later. The function $\phi(\cdot)$ represents nonlinear diffusion, $W$ is a symmetric interaction potential, and $V_0$ is an external potential.

Equation \eqref{eq_bounded} is often used to describe a system of interacting particles at the macroscopic level and explain how individual-level mechanisms give rise to population-level or collective behavior. Systems of interacting particles play a key role in many physical and biological applications, including granular materials \cite{BCP97,BCCP98}, self-assembly of nanoparticles \cite{HP06}, colloidal systems \cite{Glanz:2016jz}, ionic transport \cite{Horng:2012ioa}, cell motility \cite{Hillen:2008ita}, animal swarms \cite{Carrillo:2009hu}, pedestrian dynamics \cite{burger2011continuous}, and social sciences \cite{PT13,T06}. For example, equation \eqref{eq_bounded} with $\phi = u$ and $W = 0$ can be used to describe a system of noninteracting Brownian particles under the influence of an external field $V_0$, representing a chemical concentration in the case of chemotaxis. Deviations from Brownian motion, such as in the case of transport through porous media, can be modelled changing the diffusion term to $\phi = u^m$ with $m>1$. Interactions between particles may arise in the macroscopic model in two forms: either as a modification of the diffusion term $\phi$, for example $\phi = u + \beta u^2$, or as a nonlocal convolution. The former typically arises from short-range repulsive interactions between particles (such as excluded-volume interactions) \cite{Bruna:2012cg,CC06,CCH17}, whereas the latter is used to model long-range attractive-repulsive interactions (such as electrostatic or chemoattractive interactions) \cite{Bruna:2017vr,CCH17,TBL06}. 

The goal of this paper is to understand how the solutions of \eqref{eq_bounded} in the bounded domain $\Omega$ relate to the solutions of the following equation in the whole space, as $k\to \infty$:
\begin{align}
	\label{eq_whole}
	\begin{aligned}
	\de_{t}u_k &=  \dv\left[ \nabla\phi(u_k)  + u_k\nabla V_k + u_k\nabla (W*u_k) \right], \quad  x\in \mathbb R^d, t>0,\\
    	\end{aligned}
\end{align}
where the confinement potential is fixed in the bounded domain, that is, $V_k(x) = V_0(x)$  for $x\in \Omega$ and it becomes stronger outside $\Omega$ as $k\to\infty$.

\def \scc {0.8}
\def \scl {1.0}
\begin{figure}
\unitlength=1cm
\begin{center}
\vspace{3mm}
\psfrag{a}[][][\scl]{(a)}
\psfrag{b}[][][\scl]{(b)}
\psfrag{k}[][][\scl]{$k$} \psfrag{O}[][][\scl]{$\Omega$} 
\psfrag{OK}[][][\scl]{$\Omega_k$} 
\includegraphics[width=.45\textwidth]{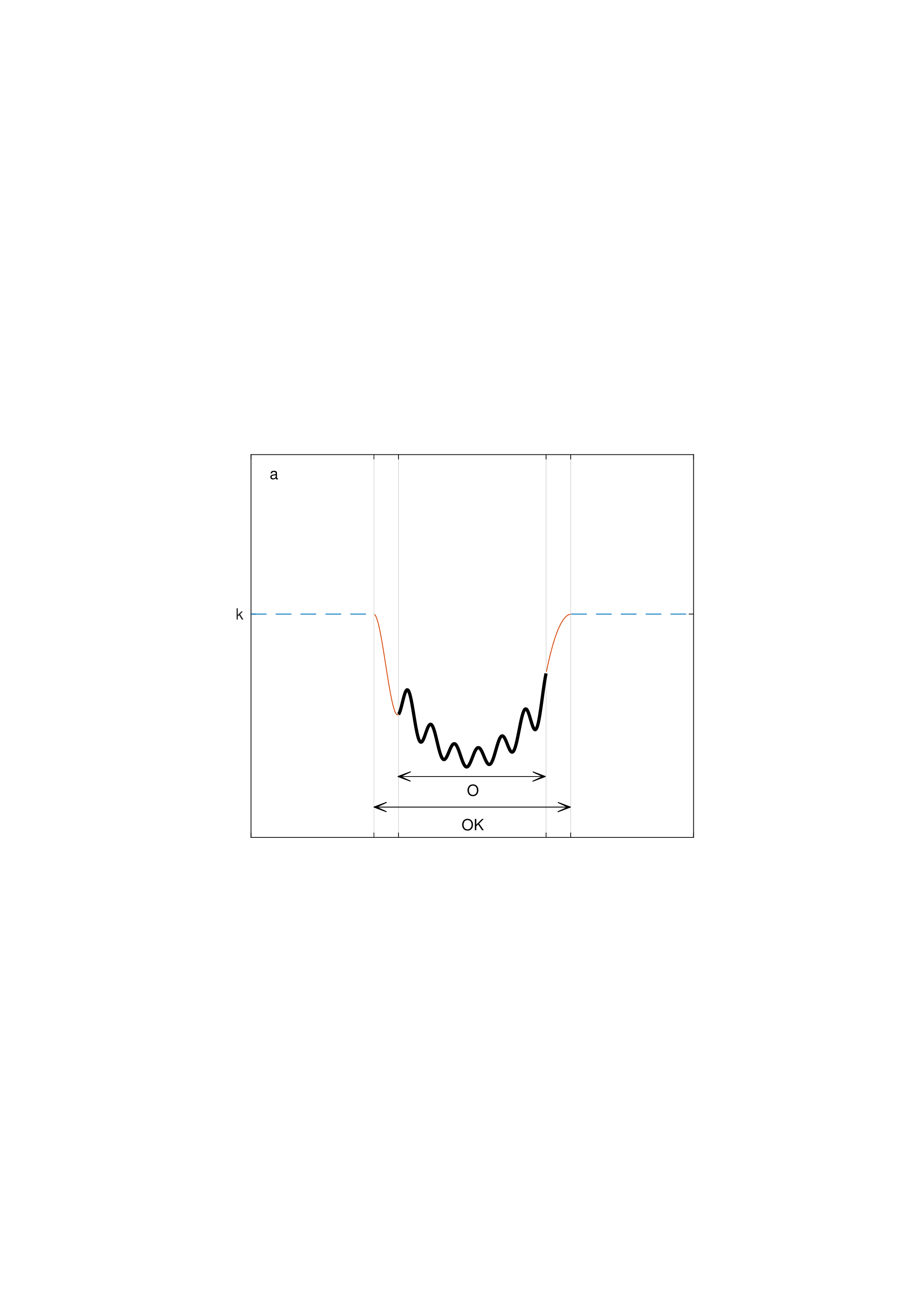}\hfill 
\includegraphics[width=.45\textwidth]{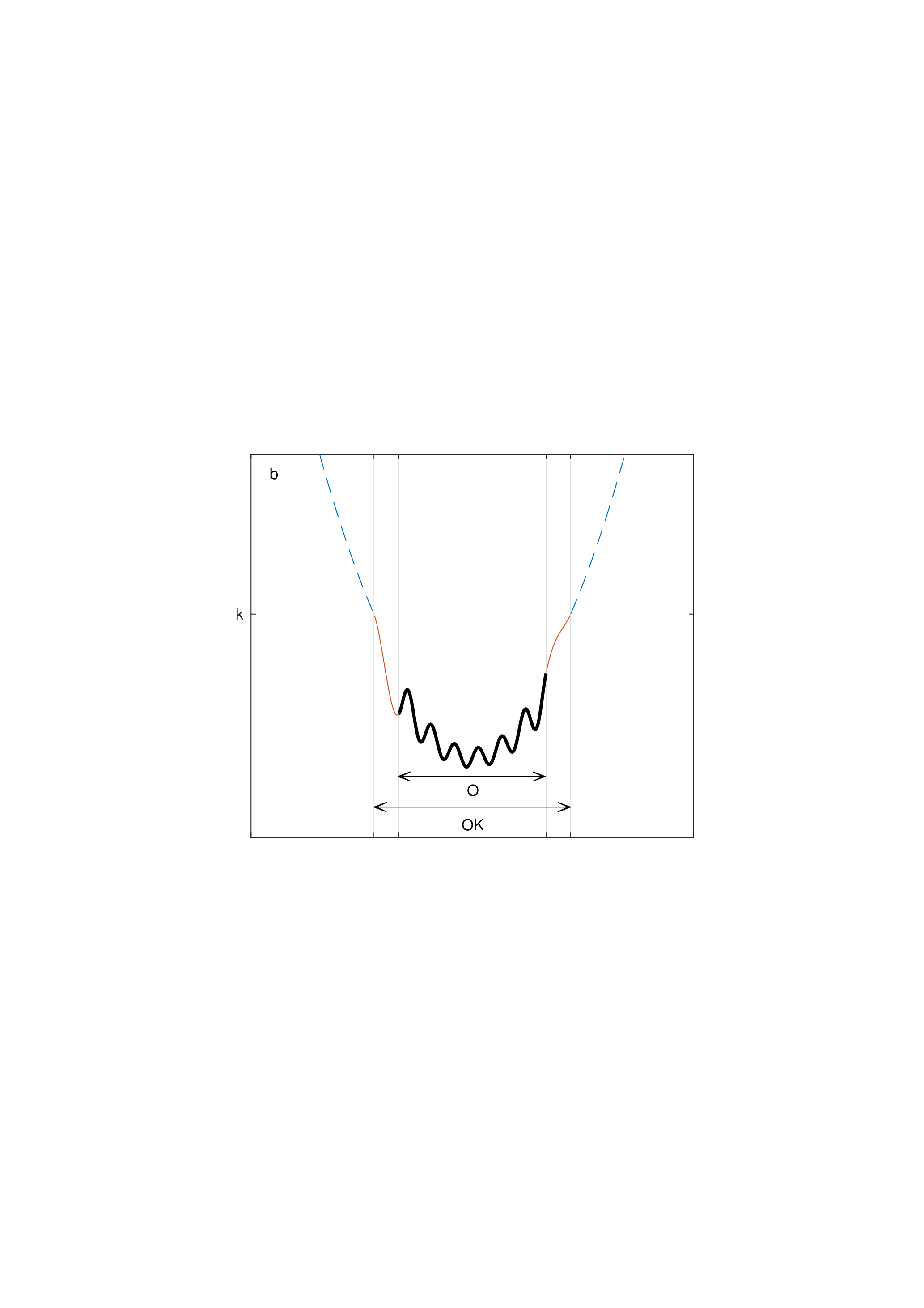}
\caption{Sketch of the global potential $V_k(x)$ for (a) the $L^2$ setting (see Definition \ref{defi:vk}) and (b) the free energy setting (see Definition \ref{defi:vk-ent}). } 
\label{fig:potential}
\end{center}
\end{figure}

In particular, we consider  potentials $V_k$ of the form depicted in Figure \ref{fig:potential} (see Definitions \ref{defi:vk} and \ref{defi:vk-ent}). The parameter $k$ determines the level of confinement ($k\to \infty$ for an infinite confinement). Our aim is to control the behavior of the solution $u_k$ to \eqref{eq_whole} when $V_k$ becomes a strong confinement potential outside $\Omega$. More precisely, we show that infinite confinement is equivalent to solving equation \eqref{eq_bounded} in a bounded domain with no-flux boundary conditions for certain class of initial data supported in $\Omega$. A similar set-up was used in \cite{alexander2016fokker} to study the limit of a local, non-degenerate, fully nonlinear problem with a method based on super- and subsolutions.

Our approximation by strong confinement has not only a theoretical interest but also a practical added value from the numerical viewpoint. Sometimes solving \eqref{eq_bounded} in a bounded domain is hindered by the geometry of $\Omega$. The strong confinement approximation is potentially useful to approximate numerically problems of the form \eqref{eq_bounded} by problems in the whole space \eqref{eq_whole} in square geometries large enough for the domain of interest. Solving in Cartesian grids is always much easier than producing good meshes for the approximated domains.

Problem \eqref{eq_bounded} is sometimes referred to as \emph{aggregation-diffusion equation} or \emph{drift-diffusion-interaction} equation and it has been recently studied by different authors. Free energy methods have been successfully applied to study long time behaviour of the solutions of problem \eqref{eq:main} in the setting of gradient flows and Wasserstein spaces. For example, long time asymptotics to nonlinear nonlocal Fokker--Planck equations have been studied in
\cite{carrillo2003kinetic,carrillo2006contractions}. Many results concerning convergence to equilibrium can be found in particular cases (with or without diffusive terms) in
\cite{arnold2001convex,Bonnaillie,carrillo2001entropy,carrillo1998exponential}.
Gradient flows in the Wasserstein sense were first described for the linear Fokker--Planck equation in \cite{jordan1998variational}; an extensive theory of the subject was later given in  \cite{ambrosio2008gradient}.
For classical results on parabolic equations we refer to
\cite{alt1983quasilinear,ladyzhenskaia1988linear} and more recent results on well-posedness for the aggregation-diffusion equations are presented in \cite{bertozzi2009existence}. 
A comparison between free energy and classical $L^2$ methods can be found in \cite{otto2001geometry}.
Some of the techniques we use in Section \ref{sec:L2} involving the weight $\exp(V)$ were also employed in \cite{alasio2018stability,markowich2013stationary}. 
In this paper we consider the problem of strong confinement using both an $L^2$ approach and a free energy approach. The $L^2$ approach is restricted to non-degenerate diffusion, while the free energy approach allows us to consider also degenerate diffusion but provides weaker estimates. 

The paper is organized as follows. In Section \ref{sec:outline} we introduce the key definitions and give an outline of the main results. Section \ref{sec:L2} is devoted to the special cases in which $\phi(u) = u$ or $W = 0$, and the problem is treated with classical energy methods in the $L^2$ setting. We consider the fully nonlinear nonlocal problem in the free energy setting by free energy methods in Section \ref{sec:entropy}. Finally, we conclude in Section \ref{sec:numerics} with some numerical examples illustrating the main results of this paper together with some numerical exploration for initial data supported outside the confined domain $\Omega$.

\section{Outline of the results} \label{sec:outline}

Let us consider the nonlinear nonlocal Fokker--Planck equation with confinement potential $V$ and interaction potential $W$ given by
\begin{align}\label{eq:main}
	\begin{aligned}
	\de_{t}u &=  \dv\left[ \nabla\phi(u)  + u\nabla V + u\nabla (W*u)  \right], \quad  x\in \RR^d, t>0, \\
	u(x,0) & = u_0(x).
	\end{aligned}
\end{align}
We denote by $Q_T$ the space $\RR^d \times [0,T]$ and by $\Omega_T$ the space $\Omega \times [0,T]$. The main results of this work are obtained under two different set of assumptions on the potentials $V$, $W$, the nonlinearity of the diffusion $\phi$, and the initial data. We will label them as the $L^2$ setting and the free energy setting.

\subsection{\texorpdfstring{$L^2$}{} setting, main results}\label{subsec1}

\begin{hyp}[$L^2$ setting]\label{assumptions}

We consider the following set of assumptions:

\begin{enumerate}
\item $0\leq u_0(x)\leq M_0$, $\supp ( u_0 ) \subseteq \Omega$.
\item $W\in \WW^{1,\infty}(\RR^d)$ is symmetric, $\nabla W \in L^1(\RR^d)$ and, without loss of generality, $W\geq 0$.
\item $V\in \WW^{1,\infty}(\RR^d)$ and, without loss of generality, we also assume $V\geq 0$.
\item $\phi\in C^1([0,\infty))$ has the form $\phi(s) = s + \sigma(s)$, $\phi(0) = 0$, and is increasing. Moreover, there exist constants $\mu >0$ and $b\geq a > 0$ such that
\begin{equation}\label{eq:growth0}
\mu s^{a} \leq \sigma'(s) \leq  \frac{1}{\mu} s^{b}.
\end{equation}
\end{enumerate}
\end{hyp}

In the $L^2$ setting, weak solutions to \eqref{eq:main} are defined as follows.

\begin{defi}[$L^2$ solution]\label{def:weaksol1} 
Suppose that Assumption \ref{assumptions} holds. We say that $u$ is an $L^2$ weak solution of \eqref{eq:main} if
$$
u\in L^2(0,T;H^1(\RR^d))\cap C^0(0,T;L^2(\RR^d)),
\;
\nabla \phi(u) \in L^2(Q_T),
\;
\de_t u\in L^2(0,T;H^{-1}(\RR^d)),
$$
and, for all test functions $\eta\in C^\infty(Q_T)$,
\begin{equation}\label{eq:weakform_def}
\left.\int_{\RR^d} u(t)\eta(t) \d x \right|_{t=0}^{t=T} - 
\int_0^T\int_{\RR^d}    
\left[
(\nabla \phi(u) 
+  u \nabla V
+  u \nabla W*u
 )
 \cdot \nabla \eta  
 -u \de_t\eta 
\right] 
 \,
\d x \d t
=0.
\end{equation}
The initial datum is satisfied in the $L^2$ sense.
\end{defi}

We now define the sequences of confinement potentials associated to a fixed confinement potential $V_0\in \WW^{1,\infty}(\Omega)$, $V_0\geq 0$, that we will use in Section \ref{sec:L2} (see Figure \ref{fig:potential}(a)).

\begin{defi}[Sequence of potentials, $L^2$ setting]\label{defi:vk}
We define the following sequence of  potentials $V_k\in \WW^{1,\infty}(\mathbb R^d)$ for $k\geq 1$ as
\begin{equation} \label{potential_Vk}
V_{k}(x)=\begin{cases}
V_0(x) & x\in\Omega,\\
\psi_k(x) & x\in\Omega_k\setminus \Omega,\\
k & x\in \mathbb R^d \setminus \Omega_k,
\end{cases}
\end{equation}
where $\Omega_k$ is an extended domain around $\Omega$,
$$
\Omega_k = \left\{ x+\frac{1}{k}e \, \big| \, x\in \Omega, e\in S^d  \right\},
$$
so that $\Omega_{k}\searrow\Omega$ as $k\to\infty$, and $\psi_k$ is a suitable $C^1$ extension of $V$.
\end{defi}

Our main result in the $L^2$ setting concerns the convergence of the sequence of solutions $u_k$, which are defined in $\RR^d$, to a limit function $u$ that solves a problem in the bounded domain $\Omega$.

\begin{theor}[Main result - $L^2$ setting]\label{thm:main1}
Assume that the conditions in Assumption \ref{assumptions} are satisfied and, additionally, suppose that one of the following conditions holds: either $\phi(s)= s$ or $W = 0$.

Consider a solution $u_k$ of the Cauchy problem \eqref{eq:main} in the sense of Definition \ref{def:weaksol1} with $V=V_k$ satisfying the conditions in Definition \ref{defi:vk}.
Then $u_k$ converges for $k\to\infty$ to a function $u$ such that 
$$
u\in L^2(0,T;H^1(\Omega))\cap C^0(0,T;L^2(\Omega)),
\;
\nabla \phi(u) \in L^2(\Omega_T),
\;
\de_t u\in L^2(0,T;H^{1}(\Omega)'),
$$
and, for all $\eta\in C^\infty(\Omega_T)$, we have: 
\begin{equation}\label{eq:limitweakform1}
\left.\int_{\Omega} u(t)\eta(t) \d x \right|_{t=0}^{t=T}
- 
\int_0^T\int_{\Omega}    
\left[
(\nabla \phi(u) 
+  u \nabla V_0
+  u \nabla W*u
 )
 \cdot \nabla \eta  
 -u \de_t\eta 
\right] 
 \,
\d x \d t
=0.
\end{equation}
In other words, $u$ is a weak solution of the Cauchy problem to \eqref{eq_bounded} with no-flux boundary conditions and initial datum $u_0$.
\end{theor}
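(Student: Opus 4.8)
The plan is to obtain the limit equation by a compactness argument, starting from uniform-in-$k$ estimates that encode the fact that mass cannot escape the growing potential well. First I would establish a priori bounds for $u_k$ that are independent of $k$: testing the weak formulation of \eqref{eq:main} with $V = V_k$ against $u_k$ itself gives an $L^2$ energy identity
\begin{equation*}
\frac12 \frac{\d}{\d t}\int_{\RR^d} u_k^2 \,\d x + \int_{\RR^d} \phi'(u_k)|\nabla u_k|^2 \,\d x = -\int_{\RR^d} u_k \nabla V_k \cdot \nabla u_k \,\d x - \int_{\RR^d} u_k \nabla(W*u_k)\cdot\nabla u_k \,\d x,
\end{equation*}
and, using $\phi'(s) \ge 1$ from Assumption \ref{assumptions}(4) together with $\nabla V_k$ being supported in $\Omega_k$ and bounded by a constant depending on $k$ — here one must be careful, so instead I would test against $u_k e^{V_k}$ (or work with the weighted functional $\int u_k^2 e^{V_k}$, as in the references \cite{alasio2018stability,markowich2013stationary}) to kill the drift term and produce a bound of the form $\sup_t \int_{\RR^d} u_k^2 e^{V_k}\,\d x + \int_0^T\!\!\int_{\RR^d} |\nabla u_k|^2 \le C$ with $C$ independent of $k$, provided $u_0$ is supported in $\Omega$ so the initial weighted norm is controlled. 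The interaction term is absorbed using $W \in \WW^{1,\infty}$, $\nabla W \in L^1$, Young's convolution inequality, and Grönwall. In parallel, the comparison/maximum principle (available because either $\phi(s)=s$ is linear or $W=0$ removes the nonlocal coupling) gives $0 \le u_k \le M(t)$ uniformly in $k$, and $\nabla \phi(u_k) \in L^2(Q_T)$ from the growth bound $\phi'(s) \le 1 + \mu^{-1}s^b$ combined with the $L^\infty$ bound. Finally, using the equation, $\de_t u_k$ is bounded in $L^2(0,T;H^{-1}(\RR^d))$.

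The second step is to extract limits. The uniform bounds give, up to a subsequence, $u_k \rightharpoonup u$ weakly in $L^2(0,T;H^1(\RR^d))$, $\nabla\phi(u_k) \rightharpoonup \chi$ weakly in $L^2(Q_T)$, and by Aubin–Lions (using the $H^1$ bound and the $H^{-1}$ bound on $\de_t u_k$) strong convergence $u_k \to u$ in $L^2(Q_T)$ and a.e., hence $\chi = \nabla\phi(u)$ by continuity of $\phi$ and the a.e. convergence of gradients in the appropriate sense. The key geometric point is to show $\supp u(\cdot,t) \subseteq \bar\Omega$ for a.e.\ $t$: the weighted estimate forces $\int_{\RR^d \setminus \Omega_{k_0}} u_k^2 \,\d x \le C e^{-k}$ for any fixed $k_0$ and $k \ge k_0$, and since $V_k \equiv k$ off $\Omega_k$ while $\Omega_k \searrow \Omega$, passing to the limit kills any mass outside $\bar\Omega$; more precisely, for any compact $K \subset \RR^d \setminus \bar\Omega$ one has $K \subset \RR^d \setminus \Omega_k$ for $k$ large, so $\int_0^T\!\!\int_K u_k^2 \le T C e^{-k} \to 0$, whence $u \equiv 0$ on $K$. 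Thus $u$ is supported in $\bar\Omega$ and, by the $H^1$ bound, its restriction lies in $L^2(0,T;H^1(\Omega)) \cap C^0(0,T;L^2(\Omega))$.

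The third step is to pass to the limit in the weak formulation. Given $\eta \in C^\infty(\Omega_T)$, extend it to a test function $\tilde\eta \in C^\infty(Q_T)$ (with compact support in $x$) and plug it into \eqref{eq:weakform_def} for $u_k$. The linear terms $\int u_k \de_t\tilde\eta$, $\int \nabla\phi(u_k)\cdot\nabla\tilde\eta$, and the boundary term pass to the limit by weak/strong convergence. For the drift term $\int u_k \nabla V_k \cdot \nabla\tilde\eta$, I would split $\RR^d = \Omega \cup (\Omega_k\setminus\Omega) \cup (\RR^d\setminus\Omega_k)$: on $\Omega$ we have $V_k = V_0$ fixed and $u_k \to u$ strongly, giving $\int_\Omega u\,\nabla V_0\cdot\nabla\tilde\eta$; on $\RR^d \setminus \Omega_k$, $\nabla V_k = 0$; the delicate region is $\Omega_k \setminus \Omega$, whose measure tends to $0$ and on which $u_k$ is bounded in $L^2$ (weighted), so by Cauchy–Schwarz and the shrinking measure this contribution vanishes — \emph{this is the main obstacle}, since $\nabla\psi_k$ is not uniformly bounded and one must use that the weighted bound $\int u_k^2 e^{V_k}$ controls $u_k$ where $V_k$ is large, exactly compensating the growth of $|\nabla\psi_k|$; a judicious choice of the extension $\psi_k$ (linear interpolation, so $|\nabla\psi_k| \lesssim k^2$ on a strip of width $1/k$) makes this quantitative. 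The convolution term $\int u_k \nabla(W*u_k)\cdot\nabla\tilde\eta$ passes to the limit because $u_k \to u$ strongly in $L^2$ implies $W*u_k \to W*u$ and $\nabla W*u_k \to \nabla W*u$ uniformly on compacts, against the weakly convergent $u_k$ — and since $u$ vanishes outside $\bar\Omega$, $W*u = W*(u\mathbf 1_\Omega)$, recovering the form in \eqref{eq:limitweakform1}. Finally, since $\tilde\eta$ restricted to $\Omega_T$ was an arbitrary smooth function, \eqref{eq:limitweakform1} holds; the regularity of $\de_t u$ in $L^2(0,T;H^1(\Omega)')$ follows from the equation and the established bounds, and recognizing \eqref{eq:limitweakform1} as the weak form of \eqref{eq_bounded} with no-flux boundary conditions is then a matter of integration by parts.
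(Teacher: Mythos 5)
Your proposal follows the same broad architecture as the paper (weighted $L^2$ estimate via the test function $u_k e^{V_k}$, Gr\"onwall to absorb the interaction term, localisation of $u_k$ near $\Omega$ from the decaying weighted bound, Aubin--Lions compactness, and a split of the weak form over $\Omega$, $\Omega_k\setminus\Omega$, $\Omega_k^c$), but there is a genuine gap in the treatment of the transition strip $\Omega_k\setminus\Omega$.

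You attempt to estimate the drift contribution $\int_{\Omega_k\setminus\Omega} u_k\,\nabla\psi_k\cdot\nabla\tilde\eta$ \emph{in isolation}, trading the weight $e^{\psi_k}$ against the size of $\nabla\psi_k$. A weighted Cauchy--Schwarz gives the bound
$$
\Bigl(\int_{\Omega_k\setminus\Omega}u_k^2 e^{\psi_k}\Bigr)^{1/2}\Bigl(\int_{\Omega_k\setminus\Omega}e^{-\psi_k}|\nabla\psi_k|^2|\nabla\tilde\eta|^2\Bigr)^{1/2},
$$
and the second factor does \emph{not} go to zero. With linear interpolation over a strip of width $1/k$, one has $\psi_k\approx k^2\delta$ at distance $\delta$ from $\partial\Omega$, hence $\int_0^{1/k} e^{-k^2\delta}k^4\,\d\delta\sim k^2$ per unit boundary area, so the product diverges like $k$. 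No choice of profile can fix this: writing $\psi_k(\delta)=k f(k\delta)$, one needs $\int_0^1 e^{-kf(s)}f'(s)^2\,\d s\to 0$, but this quantity is essentially the Dirichlet energy of $e^{-\psi_k/2}$, which must drop from $O(1)$ to $e^{-k/2}$ across the thin layer and is therefore bounded below. The paper's proof avoids this entirely by never splitting the flux on the strip: the weighted energy estimate controls the full combination $\nabla u_k + u_k\nabla V_k\,(+\,u_k\nabla W*u_k)$ in $L^2(\Omega_k\setminus\Omega)$ uniformly in $k$, and then a single Cauchy--Schwarz against $\nabla\eta\in L^2$ together with $|\Omega_k\setminus\Omega|\to 0$ shows $I_{\Omega_k\setminus\Omega}\to 0$. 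Relatedly, your claimed uniform bound $\int_0^T\int_{\RR^d}|\nabla u_k|^2\le C$ does not follow from the weighted estimate either --- what is uniformly controlled is the flux, and the clean $H^1$ control on $u_k$ only holds on $\Omega$ (where $V_k\equiv V_0$ is fixed), which is why the paper runs Aubin--Lions on $\Omega_T$ rather than on $Q_T$.

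A secondary issue: for the case $W=0$ with nonlinear $\phi$, the paper does not invoke a uniform $L^\infty$ bound via a maximum principle; instead it uses the $L^p$ estimates for every finite $p$ (from the growth of the primitive $Q$, Corollary~\ref{cor:lpest}) together with a.e.\ convergence of $u_k$ and polynomial growth of $\phi'$ to conclude $\phi'(u_k)\to\phi'(u)$ strongly in $L^2$ and hence $\phi'(u_k)\nabla u_k\rightharpoonup\phi'(u)\nabla u$. The $L^\infty$ bound $u_k\le m e^{-V_k}$ is established in the paper only for the linear case $\phi=u$, $W=0$ (Lemma~\ref{lem:energy}), and the paper explicitly notes that $L^\infty$ bounds are not straightforward when $W\ne 0$ and are not needed.
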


\begin{rem}[Convolution on a bounded domain]
Given $f : \Omega \subseteq \RR^d \to\RR $, we use the following convention:
$$
(W*f)(x) = \int_\Omega W(x-y) f(y) \d y.
$$
\end{rem}

\subsection{Free energy setting, main results}

We now introduce a different set of assumptions and we prove a convergence result corresponding to Theorem \ref{thm:main1} in a new setting. This allows us to consider confinement and interaction potentials diverging as $|x|\to\infty$, as well as degenerate diffusion terms.

\begin{hyp}[Free energy setting]\label{assumptions-ent}

We consider the following set of assumptions:

\begin{enumerate}
\item $u_0(x)\geq 0$, $\int_{\RR^d} u_0 \d x = 1$ and $\supp ( u_0 ) \subseteq \Omega$.
\item $W\in \WW_{loc}^{1,\infty}(\RR^d)$ is symmetric and, without loss of generality, $W\geq 0$. 
\item $V\in \WW_{loc}^{1,\infty}(\RR^d)$ and we assume that $V\geq c |x|^2$ for $|x|\to\infty$ and for some $c>0$.
\item $\phi\in C^1([0,\infty))$ has the form $\phi(s) = s + \sigma(s)$, $\phi(0) = 0$, and is increasing. We suppose that there exist constants $\mu >0$ and $b\geq a > 0$ such that relation \eqref{eq:growth0} holds.
\end{enumerate}
\end{hyp}
Notice that we do not assume boundedness or decay of $V$ and $W$ at infinity, but only local regularity. The initial datum can also be unbounded as we only assume non-negativity and integrability. The key ingredient is the gradient flow structure of equations \eqref{eq_bounded} and \eqref{eq_whole}.
We denote the $2$-Wasserstein space of probability measures by $\mathcal{P}_2(\RR^d)$ endowed by the $2$-Wasserstein distance $d_2$.

\begin{defi}[Free energy solution]\label{def:weaksol2}
Suppose that Assumption \ref{assumptions-ent} is satisfied.
We say that $u\in C([0,T],\mathcal{P}_2(\RR^d))\cap L^\infty(0,T;L^1(\RR^d))$ is an entropy solution of \eqref{eq:main} if $u$ is a distributional solution in $\mathcal{D}'(\RR^d)$ and it satisfies 
$$
u(t,\cdot) \to u_0 \text{ in } \mathcal{P}_2(\RR^d)
\; \text{ as } t\to 0 ,
$$
for  all $T>0$,
$$
\int_{Q_T} u \left| 
\frac{1}{u}\nabla\phi(u) + \nabla V + \nabla W * u 
\right|^2 \d x \d t
 < \infty,
$$
and, for $\Theta(u) = \int_0^u\int_1^s \frac{\phi'(r)}{r} \d r\d s$, it is a gradient flow in $\mathcal{P}_2(\RR^d)$ for the entropy functional:
$$
E[u(t)] = \int_{\RR^d} \left[\Theta(u) +  uV + \frac{1}{2}u(W*u)\right]  \d x.
$$
\end{defi}

In Section \ref{sec:entropy} we will consider the following sequence of potentials (see Figure \ref{fig:potential}(b)).

\begin{defi}[Sequence of potentials, free energy setting]\label{defi:vk-ent}
We define the following sequence of  potentials $V_k\in \WW^{1,\infty}_{loc}(\mathbb R^d)$:
\begin{equation} \label{potential_Vk-ent}
V_{k}(x)=\begin{cases}
V_0(x) & x\in\Omega,\\
\psi_k(x) & x\in\Omega_k\setminus \Omega,\\
\zeta_k(x) & x\in \mathbb R^d \setminus \Omega_k,
\end{cases}
\end{equation}
where $\Omega_k$ is an extended domain around $\Omega$,
$$
\Omega_k = \left\{ x+\frac{1}{k}e \, \big| \, x\in \Omega, e\in S^d  \right\},
$$
so that $\Omega_{k}\searrow\Omega$ as $k\to\infty$.
Here $\zeta_k(x) \ge k$ is such that $V_k(x)\geq c|x|^2$ for $|x|$ sufficiently large
(in particular $\int_{\mathbb R^d} e^{-\zeta_k(x)} \d x < \infty$), and $\psi_k(x)$ is a $C^1$ interpolant between the values of $V_0$ on $\partial \Omega$ and $\zeta_k$ outside $\Omega_k$.
\end{defi}

\begin{theor}[Main result - Free energy setting]\label{thm:main2}
Assume that the conditions in Assumption \ref{assumptions-ent} are met. Consider a solution $u_k$ of problem \eqref{eq:main} in $\RR^d$ in the sense of Definition \ref{def:weaksol2} with $V=V_k$ satisfying the conditions in Definition \ref{defi:vk-ent}. Then $u_k$ converges for $k \to \infty$ to a function $u$ satisfying  the following weak formulation in $\Omega$:
$$
\left.\int_{\Omega} u(t)\eta(t) \d x \right|_{t=0}^{t=T}
- 
\int_0^T\int_{\Omega}   \left[
\left(  
\nabla \phi(u) 
+  u \nabla V_0 
+  u \nabla W*u 
\right)\cdot \nabla \eta  
- u \de_t \eta 
\right]
\d x \d t = 0,
$$
for any $\eta\in C_0^\infty(\Omega_T)$. The initial datum is also satisfied in $\mathcal{P}_2(\Omega)$.

The same result holds for $\phi(\cdot) = \sigma(\cdot)$, that is, in the case of degenerate diffusion.
\end{theor}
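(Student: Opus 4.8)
The plan is to run a gradient-flow/free-energy compactness argument in which the only uniform input is that the initial free energy does not depend on $k$. Write $E_k[u]=\int_{\RR^d}\bigl[\Theta(u)+uV_k+\tfrac12 u(W*u)\bigr]\,\d x$, so that $u_k$ is a gradient flow of $E_k$ in $\mathcal P_2(\RR^d)$ in the sense of Definition \ref{def:weaksol2}; in particular it obeys the energy–dissipation inequality
\[
E_k[u_k(T)]+\int_0^T\!\!\int_{\RR^d} u_k\Bigl|\tfrac1{u_k}\nabla\phi(u_k)+\nabla V_k+\nabla W*u_k\Bigr|^2\,\d x\,\d t\le E_k[u_0].
\]
Since $\supp u_0\subseteq\Omega$ and $V_k=V_0$ on $\Omega$, the right-hand side equals $E_0[u_0]=:C_0$, independent of $k$. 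Because $W\ge 0$ and $\Theta$ is bounded below (and superlinear at infinity, by \eqref{eq:growth0}), this yields, uniformly in $k$: (i) $\sup_t\int\Theta(u_k(t))\le C_0$; (ii) $\sup_t\int u_k(t)V_k\,\d x\le C_0$; (iii) $\int_0^T\!\!\int u_k|\nabla\Lambda(u_k)+\nabla V_k+\nabla W*u_k|^2\le C_0$, where $\Lambda'(s)=\phi'(s)/s$.

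From (ii) and $V_k\ge c|x|^2$ at infinity I get a uniform second moment bound, hence tightness of $\{u_k(t)\}_{k,t}$; from (i) and de la Vallée Poussin, equi-integrability of $\{u_k(t)\}_{k,t}$. The key use of the confinement is that $V_k\ge k$ on $\RR^d\setminus\Omega$, so $\int_{\RR^d\setminus\Omega}u_k(t)\,\d x\le C_0/k\to 0$ uniformly in $t$: the mass outside $\Omega$ vanishes. For time compactness I use that $u_k\in AC([0,T],\mathcal P_2)$ with metric derivative governed by the dissipation, giving $d_2(u_k(t),u_k(s))\le C|t-s|^{1/2}$; with tightness and Arzelà–Ascoli this produces a subsequence with $u_k\to u$ in $C([0,T],\mathcal P_2(\RR^d))$, and the limit is supported in $\overline\Omega$. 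For space compactness on a compact $K\Subset\Omega$, I note $\nabla V_k=\nabla V_0\in L^\infty(K)$ and $\nabla W*u_k$ is bounded on $K$ (tightness plus $W\in\WW^{1,\infty}_{loc}$), so (iii) gives $\int_0^T\!\!\int_K u_k|\nabla\Lambda(u_k)|^2=\int_0^T\!\!\int_K\frac{|\nabla\phi(u_k)|^2}{u_k}\le C$; together with (i) and Hölder this bounds $\nabla\phi(u_k)$ (and, in the non-degenerate case, $\nabla u_k$ since $\phi'\ge 1$) in $L^p(K\times[0,T])$ for some $p>1$, while the weak formulation gives $\de_t u_k$ bounded in $L^1(0,T;W^{-m,1}(K))$ through the $L^1$-bound on the flux. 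A nonlinear Aubin–Lions–Simon argument (using the equi-integrability above) then yields $u_k\to u$ strongly in $L^1(K\times[0,T])$, hence a.e., for every $K\Subset\Omega$.

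To pass to the limit, fix $\eta\in C_0^\infty(\Omega_T)$ with $\supp\eta\Subset\Omega$ and write the weak (distributional) formulation of \eqref{eq:main} with $V=V_k$ against $\eta$; all coefficients are evaluated on $\supp\eta$, where $V_k\equiv V_0$. Integrating the diffusion term by parts, $\int\!\!\int\nabla\phi(u_k)\cdot\nabla\eta=-\int\!\!\int\phi(u_k)\,\Delta\eta$, and since $\phi(u_k)\to\phi(u)$ a.e. and (after upgrading via the $L^p$-gradient bound and Sobolev embedding, plus equi-integrability) in $L^1_{loc}(\Omega_T)$, this converges to $\int\!\!\int\nabla\phi(u)\cdot\nabla\eta$; the drift term converges since $u_k\to u$ in $L^1_{loc}$ and $\nabla V_0\cdot\nabla\eta\in L^\infty$; for the nonlocal term I split $W*u_k$ into its contributions from $\Omega$ and from $\RR^d\setminus\Omega$, the latter negligible by the mass-escape estimate, the former convergent by the strong convergence on $\Omega$ and $W\in\WW^{1,\infty}_{loc}$, yielding $u_k\nabla W*u_k\to u\,\nabla W*u$ with the convolution on $\Omega$; the boundary and $\de_t\eta$ terms pass to the limit by the $\mathcal P_2$ convergence. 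This gives the weak formulation of the statement, with initial datum attained in $\mathcal P_2(\Omega)$ because $u_k(0)=u_0$ and $u_k\to u$ in $C([0,T],\mathcal P_2)$. For the degenerate case $\phi=\sigma$, \eqref{eq:growth0} still makes $\Theta$ bounded below and superlinear at infinity, so the uniform bounds and the gradient-flow structure are unchanged; only the compactness step must be re-run without the linear part, extracting strong convergence of $u_k$ from the bound on $\nabla\phi(u_k)$ and the monotonicity of $\phi$.

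I expect the main obstacle to be precisely the space-compactness/identification step: deducing strong $L^1_{loc}$ convergence of $u_k$ on $\Omega_T$, and thereby that the weak limit of $\nabla\phi(u_k)$ is $\nabla\phi(u)$, from the free-energy dissipation alone — which only controls $u_k$ in $L^{1+a}$ and $\tfrac{|\nabla\phi(u_k)|^2}{u_k}$ in $L^1$. The gap $b\ge a$ in \eqref{eq:growth0} makes the nonlinear Aubin–Lions step, and the equi-integrability of $\phi(u_k)$ needed to identify the limit, the crux; the degenerate case adds the further difficulty that $\phi'$ no longer has a positive lower bound. A secondary technical point is controlling the nonlocal term near $\partial\Omega$ and at infinity under only the local regularity of $W$ in Assumption \ref{assumptions-ent}, which is where bound (ii) and the quadratic lower bound on $V_k$ are used.
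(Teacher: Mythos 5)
Your plan follows the same overall architecture as the paper — free-energy dissipation gives uniform bounds, the confinement kills the mass outside $\Omega$, and compactness plus passage to the limit on $\Omega$ close the argument — but it diverges on the one step that is really the crux, and the divergence is a gap that you yourself flag without closing. Your plan for the diffusion flux is to integrate by parts once more, reducing it to $-\int\!\!\int\phi(u_k)\,\Delta\eta$, and then argue $\phi(u_k)\to\phi(u)$ in $L^1_{loc}(\Omega_T)$. But the free-energy bound only controls $\int\Theta(u_k)$, which under $\sigma'(s)\geq\mu s^a$ gives $u_k\in L^\infty(0,T;L^{1+a}(\Omega))$, while $\phi$ may grow like $s^{1+b}$ with $b\geq a$. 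So equi-integrability of $\phi(u_k)$ does \emph{not} follow from the available bounds, and your Aubin--Lions step only gives strong convergence of $u_k$ in $L^1_{loc}$, not of $\phi(u_k)$. This is a genuine missing idea, not a technicality.

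The paper sidesteps this entirely by never attempting to pass to the limit in $\phi(u_k)$. It factors the integrand as $\sqrt{u_k}\cdot F_k\cdot\nabla\eta$ with $F_k := \sqrt{u_k}\,\nabla\bigl(\log u_k + \xi(u_k) + V_k + W*u_k\bigr)$, which is bounded in $L^2(\Omega_T)$ purely by the dissipation estimate. Compactness is then applied to $\sqrt{u_k}$, not $u_k$: the dissipation gives $\nabla\sqrt{u_k}\in L^2(\Omega_T)$ (because $\int u_k|\nabla(\log u_k+\xi(u_k))|^2 = \int(1+\sigma'(u_k))|\nabla\sqrt{u_k}|^2$ and $\sigma'\geq 0$), conservation of mass gives $\sqrt{u_k}\in L^\infty(0,T;L^2)$, and interpolation plus the modified Aubin--Lions lemmas of Moussa and Chen--J\"ungel--Liu (applied with $\Phi=\sqrt{|\cdot|}$, or $|\cdot|^{a+1/2}$ in the degenerate case) give strong $L^2(\Omega_T)$ convergence of $\sqrt{u_k}$. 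The flux limit is then identified as the product of the strong limit of $\sqrt{u_k}$ and the weak $L^2$ limit of $F_k$. This avoids any need for an $L^p$ bound on $\phi(u_k)$, which is why the hypothesis $b\geq a$ causes no trouble there. The rest of your proposal — the mass-escape estimate via $V_k\geq k$ outside $\Omega_k$, tightness from the quadratic lower bound on $V_k$, Carleman-type control of the negative entropy, treatment of the transition region $\Omega_k\setminus\Omega$ via $|\Omega_k\setminus\Omega|\to 0$ and equi-integrability, and attainment of the initial datum in $\mathcal P_2$ — is aligned with the paper. To repair the proof, you should replace your integration-by-parts argument for the diffusion term with the paper's $\sqrt{u_k}$-factorization and apply the modified Aubin--Lions machinery to $\sqrt{u_k}$ (or $u_k^{a+1/2}$) rather than to $u_k$.
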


\section{Analysis via \texorpdfstring{$L^2$}{L2}  estimates} \label{sec:L2}

The well-posedness of problem \eqref{eq:main} in the $L^2$ setting can be deduced from the existence and uniqueness results in \cite{bertozzi2009existence}. Let us discuss some of the properties of the solutions. Without loss of generality we assume that $\phi$ is the restriction of a $C^1$ function on the whole line to the positive semi-axis.

\begin{lem}[Non-negativity and conservation of mass]\label{lem:mass-pos}
Let Assumption \ref{assumptions} hold. Any weak solution $u$ of problem \eqref{eq:main} in the sense of Definition \ref{def:weaksol1} is non-negative and furthermore it preserves mass, that is,
$$
\int_{\RR^d} u\, \d x= \int_{\RR^d} u_0 \,\d x.
$$
\end{lem}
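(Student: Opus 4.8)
The plan is to establish both claims — non-negativity and conservation of mass — by testing the weak formulation \eqref{eq:weakform_def} against carefully chosen functions. For conservation of mass, the natural choice is a test function $\eta$ that is identically $1$ on a large ball containing the relevant dynamics and then to pass to the limit; since test functions live in $C^\infty(Q_T)$ rather than having compact support, I would first argue that the decay of $u$ (guaranteed by $u\in C^0(0,T;L^2(\RR^d))$ together with the $L^1$-theory from \cite{bertozzi2009existence}) lets us localize. Concretely, take a cutoff $\eta = \chi_R(x)$ with $\chi_R \equiv 1$ on $B_R$, $\chi_R \equiv 0$ outside $B_{2R}$, $|\nabla\chi_R|\leq C/R$, independent of $t$ so that $\partial_t\eta = 0$. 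Plugging into \eqref{eq:weakform_def} gives
\[
\int_{\RR^d} u(T)\chi_R\,\d x - \int_{\RR^d} u_0\chi_R\,\d x = \int_0^T\int_{\RR^d} \bigl(\nabla\phi(u) + u\nabla V + u\,\nabla W*u\bigr)\cdot\nabla\chi_R\,\d x\,\d t.
\]
The right-hand side is bounded by $\tfrac{C}{R}\bigl(\|\nabla\phi(u)\|_{L^1(Q_T)} + \|\nabla V\|_\infty\|u\|_{L^1(Q_T)} + \|\nabla W\|_\infty\|u\|_{L^1(Q_T)}^2\bigr)$, which tends to $0$ as $R\to\infty$ using the integrability from Definition \ref{def:weaksol1} and Assumption \ref{assumptions}; meanwhile the left-hand side converges to $\int u(T) - \int u_0$ by monotone convergence. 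This yields mass conservation.

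For non-negativity, the standard approach is to test against a truncation of the negative part of $u$. Let $\beta_\delta$ be a smooth, nondecreasing approximation of $s\mapsto \min(s,0)$ (so $\beta_\delta' \geq 0$, $\beta_\delta(s) = 0$ for $s\geq 0$, $\beta_\delta \to (s)_-$), and ideally use $\eta = \beta_\delta(u)$ — but since $u$ need only be $H^1$ in space and the formulation requires smooth test functions, I would instead work with the formulation in $H^{-1}$, writing $\langle \partial_t u, \beta_\delta(u)\rangle$ and using the chain rule valid for $u\in L^2(0,T;H^1)\cap H^1(0,T;H^{-1})$. This gives
\[
\frac{\d}{\d t}\int_{\RR^d} B_\delta(u)\,\d x = -\int_{\RR^d}\beta_\delta'(u)\nabla u\cdot\bigl(\nabla\phi(u) + u\nabla V + u\,\nabla W*u\bigr)\,\d x,
\]
where $B_\delta' = \beta_\delta$. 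The leading term $-\int \beta_\delta'(u)\phi'(u)|\nabla u|^2 \leq 0$ since $\phi' \geq 1 > 0$ and $\beta_\delta' \geq 0$; the drift terms are controlled, after passing $\delta\to 0$, by $\int_{\{u<0\}} |u|\,|\nabla u|\,(\|\nabla V\|_\infty + \|\nabla W\|_\infty\|u\|_{L^1})$, which can be absorbed into the good term via Young's inequality (using $\phi'\geq 1$) plus a Gronwall-type estimate on $\int (u)_-^2$. Since $(u_0)_- = 0$ by Assumption \ref{assumptions}, Gronwall forces $(u(t))_- = 0$ for a.e. $t$.

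The main obstacle is the mismatch between the regularity class of $u$ and the smoothness demanded of test functions in \eqref{eq:weakform_def}: one cannot directly insert $\beta_\delta(u)$ or a sharp cutoff. I expect to resolve this by a density/approximation argument — extending \eqref{eq:weakform_def} by continuity to all $\eta\in L^2(0,T;H^1(\RR^d))$ with $\partial_t\eta\in L^2(0,T;H^{-1})$, which is legitimate given $u\in L^2(0,T;H^1)$, $\nabla\phi(u)\in L^2(Q_T)$, $\partial_t u\in L^2(0,T;H^{-1})$ and the boundedness of $\nabla V$, $\nabla W$ — and then the chain-rule justification for $B_\delta(u)$ follows from the Lions–Magenes / Brezis lemma on the duality pairing $\langle\partial_t u, u\rangle$. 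Everything else (the cutoff estimates, the Young and Gronwall steps) is routine once this functional-analytic setup is in place.
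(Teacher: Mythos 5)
Your proposal is correct and follows essentially the same route as the paper: test with a (cutoff approximation of the) constant $1$ for mass conservation, and test with the negative part $(u)_-$ (suitably regularized) for non-negativity, closing with Young's inequality and Gronwall. The paper simply takes $\eta=1$ directly and tests against $(u)_-$ without the $\beta_\delta$ mollification, leaving implicit the chain-rule and density justifications that you spell out; the substance is the same.
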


\begin{proof}
To obtain conservation of mass, it is sufficient to test with the function $\eta=1$ and integrate by parts  to obtain $0=\langle \de_t u, 1\rangle = \de_t \int_{\RR^d} u \d x$.

In order to obtain non-negativity, we consider a solution $u$ of problem \eqref{eq:main} (in the sense of Definition \ref{def:weaksol1}) and we test the equation against $\theta=(u)_-$ (which is non-negative and supported in the set $\{ u \leq 0 \}$). 
In particular, for a.e. $t$ we have 
$$
 \frac{d}{dt} \int_{\RR^d} \frac{1}{2} \theta^2 \d x
+ \int_{\RR^d} \left[\phi'(u)| \nabla \theta|^2 
+  \theta \nabla ( V +W*u) \cdot \nabla \theta \right]
\d x
=0,
$$
which implies, since $\phi'(s)\geq 1$,
$$
\int_{\RR^d} \frac{1}{2} \theta^2 \d x 
+\int_{Q_T} | \nabla \theta|^2 \d x 
\leq 
\nm{\nabla ( V +W*u)}_{L^\infty(Q_T)} \int_{Q_T} \left[ \frac{\eps}{2} | \nabla \theta|^2 + \frac{1}{2\eps} | \theta|^2 \right] \d x,	
$$
for $\eps= \nm{\nabla ( V +W*u)}_{L^\infty(Q_T)}^{-1}$.
Notice that we have already established that $\int_{\RR^d} u \d x  = \int_{\RR^d} u_0 \d x$ and that the quantity $\nm{\nabla ( V +W*u)}_{L^\infty(Q_T)}$ is finite thanks to Assumption \ref{assumptions}.
Thus we have
$$
\int_{\RR^d} \theta^2 \d x 
+\int_{Q_T}  | \nabla \theta|^2 \d x
\leq 
\nm{\nabla ( V +W*u)}_{L^\infty(Q_T)}^2 \int_{Q_T}  |  \theta|^2 \d x.
$$
Using Gronwall's inequality we obtain $\theta = (u)_-=0$ a.e. $(x,t)\in Q_T$.
\end{proof}

\subsection{Linear Fokker--Planck equation} \label{sec:linear}

We begin with the simplest case with non-interacting particles ($W=0$, $\phi(s)=s$); in this case it is possible to work in an $L^2$ setting.

\begin{lem}[Energy identity and boundedness]\label{lem:energy}
Consider the scalar equation
\begin{align} \label{linear_problem}
\begin{aligned}
\de_{t}u &=  \Delta u + \dv(u\nabla V), \quad x\in \RR^d, t>0,\\
 u(x,0) & =u_0(x).
 \end{aligned}
\end{align}
For every weak solution of problem \eqref{linear_problem} (in the sense of Definition \ref{def:weaksol1}), the following identity holds
\begin{equation}\label{eq:energy}
\int_{\RR^d} u(T)^{2}e^{V}\d x
+2\int_{0}^{T}\int_{\RR^d} 
e^{V}|\nabla u+u\nabla V|^{2}\d x\d t=
\int_{\RR^d} u_0{}^{2}e^{V}\d x,
\end{equation}
for a.e. $T>0$.
In addition, $u$ has exponential tails in $L^\infty(Q_T)$ in the following sense
\begin{equation}\label{eq:boundedness}
u_0\leq me^{-V} \Rightarrow u\leq me^{-V} \text{ a.e. } (t,x)\in Q_T,
\end{equation}
for a fixed $m\geq 0$.
\end{lem}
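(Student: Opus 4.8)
The plan is to obtain the energy identity \eqref{eq:energy} by testing the weak formulation \eqref{eq:weakform_def} (specialized to the linear equation \eqref{linear_problem}) with the function $\eta = u e^{V}$, and then to obtain the pointwise bound \eqref{eq:boundedness} by testing with a carefully chosen function that detects the set where $u$ exceeds $m e^{-V}$. For the first part, I would write the flux as $\nabla u + u\nabla V = e^{-V}\nabla(u e^V)$, which is the natural "weighted gradient" associated with the stationary weight $e^{-V}$; since $V\in W^{1,\infty}(\RR^d)$ and $u\in L^2(0,T;H^1)$, the product $ue^V$ is an admissible test function (after the standard approximation argument by smooth functions, justified because $e^V$ and its gradient are bounded). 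Plugging $\eta = u e^V$ into \eqref{eq:weakform_def}, the time-derivative term produces $\tfrac{d}{dt}\int u^2 e^V/2$ up to the boundary contributions at $t=0,T$ (here one uses $\de_t u \in L^2(0,T;H^{-1})$ together with $u\in C^0(0,T;L^2)$ to make sense of the duality pairing), while the flux term becomes
$$
\int_0^T\!\!\int_{\RR^d} e^{-V}\nabla(ue^V)\cdot\nabla(ue^V)\,\d x\,\d t
= \int_0^T\!\!\int_{\RR^d} e^{V}|\nabla u + u\nabla V|^2\,\d x\,\d t,
$$
which yields \eqref{eq:energy} after multiplying by $2$. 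I would present this first at the level of smooth/approximate solutions and then pass to the limit, or alternatively invoke the Lions–Magenes integration-by-parts lemma to justify $\langle \de_t u, u e^V\rangle = \tfrac12\tfrac{d}{dt}\int u^2 e^V$ directly.

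For the exponential-tails bound \eqref{eq:boundedness}, set $w = u - m e^{-V}$ and let $\theta = (w)_+ = (u - me^{-V})_+$, which is nonnegative and supported on $\{u > me^{-V}\}$; the hypothesis gives $\theta|_{t=0} = 0$. The key observation is that $me^{-V}$ is a stationary (super)solution of \eqref{linear_problem}: $\Delta(me^{-V}) + \dv(me^{-V}\nabla V) = \dv\big(\nabla(me^{-V}) + me^{-V}\nabla V\big) = \dv\big(e^{-V}\nabla(m)\big) = 0$ since $\nabla(me^{-V}) + me^{-V}\nabla V = 0$ identically. Hence $w$ satisfies the same linear equation, $\de_t w = \Delta w + \dv(w\nabla V)$, and testing against $\eta = \theta e^V$ (mirroring the computation above and the non-negativity argument in Lemma \ref{lem:mass-pos}) gives
$$
\frac12\frac{d}{dt}\int_{\RR^d}\theta^2 e^V\,\d x
+ \int_{\RR^d} e^{V}\,e^{-V}|\nabla(\theta e^V)|^2\cdot(\text{localization on }\{\theta>0\})\,\d x = 0,
$$
so $\tfrac{d}{dt}\int \theta^2 e^V \le 0$; since $\theta(0)=0$ and $e^V \ge 1$, Gronwall's inequality forces $\theta \equiv 0$ a.e. in $Q_T$, i.e. $u \le m e^{-V}$. (One could equally run this as an $L^2$ energy estimate with a Cauchy–Schwarz splitting of the drift term exactly as in Lemma \ref{lem:mass-pos}, but here the drift is absorbed into the weighted gradient, so the estimate is cleaner.)

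The main technical obstacle is the \emph{justification of the test functions}: neither $ue^V$ nor $\theta e^V$ is smooth, so one must argue that they may legitimately be used in \eqref{eq:weakform_def}. For $ue^V$ this is routine density in $L^2(0,T;H^1)$ combined with the boundedness of $e^V, \nabla e^V$ and the integrability $\de_t u\in L^2(0,T;H^{-1})$, which together make the limit in every term of \eqref{eq:weakform_def} stable; the time-boundary terms are handled by the continuity $u\in C^0([0,T];L^2)$. For $\theta e^V$ there is the additional subtlety that $s\mapsto (s)_+$ is only Lipschitz, not $C^1$, so one regularizes with a smooth convex approximation $j_\delta$ of $(\cdot)_+^2/2$, derives the inequality for $j_\delta'(w)e^V$, and passes $\delta\to 0$ using dominated convergence — this is entirely standard (e.g.\ as in \cite{ladyzhenskaia1988linear}) but should be remarked upon. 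Everything else — the algebraic identity $\nabla u + u\nabla V = e^{-V}\nabla(ue^V)$, the fact that $me^{-V}$ is a stationary solution, and the Gronwall step — is elementary.
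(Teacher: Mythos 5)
Your proof is correct and follows essentially the same route as the paper: the energy identity is obtained by testing with $ue^V$ and rewriting the flux as $e^{-V}\nabla(ue^V)$, exactly as in the paper. For the $L^\infty$ bound, the paper changes variables to $\tilde u = ue^V$ and tests the resulting equation $e^{-V}\partial_t\tilde u = \dv(e^{-V}\nabla\tilde u)$ against $(\tilde u - m)_+$; your choice $\theta\, e^V = (u - me^{-V})_+\,e^V = (ue^V - m)_+$ is literally the same test function, just described via the observation that $me^{-V}$ is a stationary solution rather than via a change of unknown. One small remark: the integrand in your displayed inequality for the boundedness part ($e^V e^{-V}|\nabla(\theta e^V)|^2$ with an informal ``localization'' factor) is garbled as written — the correct dissipation term is $\int e^{-V}|\nabla(\theta e^V)|^2\,\d x$ with the weight $e^{-V}$, and the restriction to $\{\theta>0\}$ is automatic since $\nabla(\theta e^V)$ vanishes a.e.\ where $\theta=0$ — but this is clearly what you intended and does not affect the conclusion. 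Your extra care in flagging the need to regularize $(\cdot)_+$ and to justify the non-smooth test functions is appropriate; the paper carries out these steps formally.
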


\begin{proof}
We test the equation against $ue^{V}$. We obtain
\begin{equation}\label{linear_en1}
	\int_{\RR^d} \de_{t}u\left(ue^{V}\right)\d x
=\int_{\RR^d} \left[\Delta u+\dv(u\nabla V)\right]ue^{V}\d x.
\end{equation}
Integrating the left-hand side of \eqref{linear_en1} in time, we deduce
\begin{align*}
\int_{0}^{T}\int_{\RR^d}
\de_{t}u\left(ue^{V}\right)\d x\d t  = 
 \int_{0}^{T}\int_{\RR^d}
\de_{t}\left(\frac{1}{2}u^{2}e^{V}\right)\d x\d t =  \int_{\RR^d}
 \frac{1}{2}u(T)^{2}e^{V}\d x-
 \int_{\RR^d}\frac{1}{2}u_0{}^{2}e^{V}\d x.
\end{align*}
Using integration by parts in the right-hand side of \eqref{linear_en1} we get
\begin{align*}
\int_{\RR^d} \left[\Delta u-\dv(u\nabla V)\right]\cdot\left(ue^{V}\right)\d x & =  
-\int_{\RR^d}\left(\nabla u+u\nabla V\right)\cdot\nabla\left(ue^{V}\right)\d x\\
 & =  
 -\int_{\RR^d} e^{V}|\nabla u+u\nabla V|^{2}\d x
 \leq  0.
\end{align*}
Collecting terms yields
\[
\int_{\RR^d}\frac{1}{2}e^{V} u(T)^{2}\d x
=\int_{\RR^d}\frac{1}{2}u_0{}^{2}e^{V}\d x
-\int_{0}^{T}\int_{\RR^d} e^{V}|\nabla u+u\nabla V|^{2}\d x\d t,
\]
as required.
In order to prove boundedness, we consider the function $\tilde{u} = ue^V$.
Let us rewrite equation \eqref{linear_problem} in terms of $\tilde{u}$:
$$
e^{-V} \de_t \tilde{u} =  \dv\left( e^{-V}\nabla \tilde{u} \right).
$$
We integrate the equation above against the test function $(\tilde{u}-m)_+$:
$$
\int_{\RR^d}\frac{1}{2} e^{-V} (\tilde{u}(t)-m)_+^{2}\d x
+
\int_{0}^{t}\int_{\RR^d}e^{-V}|\nabla (\tilde{u}-m)_+|^{2}\d x\d \tau
=
\int_{\RR^d}\frac{1}{2} e^{-V} (\tilde{u}(0)-m)_+^{2}\d x.
$$
Notice that $\tilde{u}(0) = u_0 e^V$, thus the right-hand side in the equality above vanishes. This means that $\tilde{u}(t)\leq m$ for a.e. 
$(t,x)\in Q_T$ and the proof is complete.
\end{proof}

\begin{cor} \label{cor:mass1}
Consider the assumptions of Lemma \ref{lem:energy} and recall that $\supp ( u_0 ) \subseteq \Omega$. 
Let $V=V_k$ as in Definition \ref{defi:vk} and consider $u=u_k$ as in Lemma \ref{lem:energy}.
Then $u_k(x,t)\to 0$  in $L^\infty([0,T]\times \Omega_k^{c})$ for $k\to\infty$.
\end{cor}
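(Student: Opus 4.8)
The plan is to invoke the exponential-tail bound \eqref{eq:boundedness} of Lemma \ref{lem:energy} with a cutoff level $m$ chosen uniformly in $k$, and then to exploit the fact that, by Definition \ref{defi:vk}, the potential satisfies $V_k \equiv k$ on $\Omega_k^c = \RR^d \setminus \Omega_k$.

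First I would produce a constant $m$, independent of $k$, such that $u_0 \leq m\, e^{-V_k}$ a.e.\ in $\RR^d$. On $\Omega$ we have $V_k = V_0 \in \WW^{1,\infty}(\Omega)$, hence $V_0 \leq C_0 := \nm{V_0}_{L^\infty(\Omega)}$; combined with $0 \leq u_0 \leq M_0$ this gives $u_0 \leq M_0 = M_0 e^{C_0} e^{-C_0} \leq M_0 e^{C_0} e^{-V_0} = m\, e^{-V_k}$ on $\Omega$, where $m := M_0 e^{C_0}$ does not depend on $k$. Outside $\Omega$ the inequality $u_0 \leq m\, e^{-V_k}$ is trivial, since $\supp(u_0) \subseteq \Omega$ and $V_k$ is everywhere finite. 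Applying \eqref{eq:boundedness} to $u = u_k$ with this $m$ then yields $u_k \leq m\, e^{-V_k}$ a.e.\ in $Q_T$; together with non-negativity of $u_k$ (which follows exactly as in Lemma \ref{lem:mass-pos}, using $\phi'\equiv 1$), we restrict to $\Omega_k^c$, where $V_k \equiv k$, to obtain
\[
0 \leq u_k(x,t) \leq m\, e^{-k} \qquad \text{for a.e.\ } (x,t)\in [0,T]\times \Omega_k^c ,
\]
so that $\nm{u_k}_{L^\infty([0,T]\times \Omega_k^c)} \leq m\, e^{-k} \to 0$ as $k\to\infty$.

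There is no real obstacle here: the whole argument is essentially immediate once Lemma \ref{lem:energy} is in hand. The only point that requires (minor) care is checking that the cutoff level $m = M_0 e^{C_0}$ can be taken independent of $k$ — this relies solely on $V_0$ being fixed and bounded on $\Omega$ and on the uniform bound $u_0 \leq M_0$ — since it is precisely the $k$-independence of $m$, against the $k$-dependent lower bound $V_k \equiv k$ on $\Omega_k^c$, that forces the decay of $m\, e^{-k}$.
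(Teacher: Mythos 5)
Your proof is correct and follows the same strategy as the paper's (one-line) proof: cite the exponential-tail bound \eqref{eq:boundedness} and combine it with $V_k = k$ on $\Omega_k^c$. The only addition is that you make explicit the $k$-uniform choice $m = M_0 e^{\|V_0\|_{L^\infty(\Omega)}}$, which the paper leaves implicit; that detail is exactly what makes the argument rigorous and is worth spelling out.
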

\begin{proof}
It is a direct consequence of \eqref{eq:boundedness} (recalling that $V_k\to\infty$ on $\Omega_k^c$).
\end{proof}

\subsection{Nonlocal Fokker--Planck equation} \label{sec:nonlocal}

Here we consider the extension of the previous linear case to include a nonlinear interaction potential $W$. 

\begin{lem}[$L^2$ energy estimate, case $\phi(s)=s$]\label{lem:l2est}
Let $u$ be a weak solution of problem \eqref{eq:main} with $\phi(s)=s$ in the sense of Definition \ref{def:weaksol1}. 
Let $\bar{u}_0$ be the (constant) mass of $u$ and recall that $\supp (u_0) \subset \Omega$.
The following inequality holds:
\begin{equation}\label{eq:energyw}
\int_{\RR^d} u(T)^{2}e^{V}\d x
+\int_{0}^{T} \int_{\RR^d} e^{-V} |\nabla( e^Vu )|^{2} \, \d x\d t
\leq 
C(\nabla W, \bar{u}_0, T) \int_{\Omega} u_0{}^{2}e^{V}\d x.
\end{equation}
\end{lem}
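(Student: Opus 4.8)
The plan is to mimic the energy identity of Lemma~\ref{lem:energy}, testing the weak formulation against $u e^{V}$, but now keeping track of the extra term produced by the nonlocal drift $u\nabla(W*u)$. First I would reproduce the computation of Lemma~\ref{lem:energy}: testing $\de_t u = \Delta u + \dv(u\nabla V) + \dv(u\nabla(W*u))$ against $ue^{V}$ gives, after integrating in time,
$$
\int_{\RR^d}\tfrac12 u(T)^2 e^{V}\d x
+\int_0^T\!\!\int_{\RR^d} e^{-V}|\nabla(e^V u)|^2\d x\d t
=\int_{\RR^d}\tfrac12 u_0^2 e^{V}\d x
-\int_0^T\!\!\int_{\RR^d} u\,\nabla(W*u)\cdot\nabla(e^V u)\d x\d t,
$$
where I have rewritten $\nabla u + u\nabla V = e^{-V}\nabla(e^V u)$ exactly as in the linear case. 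The routine justification that $ue^{V}$ is an admissible test function (density argument, using $V\in\WW^{1,\infty}$ and the regularity in Definition~\ref{def:weaksol1}) I would relegate to a sentence.

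The heart of the matter is estimating the cross term $I:=\int_0^T\!\int_{\RR^d} u\,\nabla(W*u)\cdot\nabla(e^V u)\,\d x\d t$. I would bound it by Cauchy--Schwarz and Young's inequality: since $|u|\le e^{-V}|e^V u|$ (using $V\ge 0$) and $\nabla(W*u)=(\nabla W)*u$, one has $\|\nabla(W*u)\|_{L^\infty}\le\|\nabla W\|_{L^1}\|u\|_{L^\infty}$ — but to avoid invoking an $L^\infty$ bound on $u$ I would instead use $\|(\nabla W)*u\|_{L^\infty}\le\|\nabla W\|_{L^\infty}\|u\|_{L^1}=\|\nabla W\|_{L^\infty}\bar u_0$, which is exactly where conservation of mass (Lemma~\ref{lem:mass-pos}) and the hypothesis $\supp u_0\subseteq\Omega$ enter. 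Then
$$
|I|\le \|\nabla W\|_{L^\infty}\bar u_0
\int_0^T\!\!\int_{\RR^d} e^{-V/2}|e^{V/2}u|\;e^{-V/2}\bigl|\nabla(e^V u)\bigr|e^{V/2}\,\ldots
$$
— more cleanly, write $u\,\nabla(W*u) = e^{-V}(e^V u)\,(\nabla W*u)$, so the integrand is $e^{-V}(e^V u)(\nabla W*u)\cdot\nabla(e^V u)$, and apply Young's inequality with a small parameter $\eps$ to split off $\eps\int e^{-V}|\nabla(e^V u)|^2$ (absorbed by the left-hand side) against $\tfrac{1}{\eps}\|\nabla W\|_{L^\infty}^2\bar u_0^2\int e^{-V}|e^V u|^2 = \tfrac1\eps\|\nabla W\|_{L^\infty}^2\bar u_0^2\int u^2 e^{V}$. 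This last quantity is controlled by the left-hand-side energy $\int u(t)^2 e^V\d x$, so after choosing $\eps$ (say $\eps=1$ after rescaling the $2$ that appears when one doubles the identity) one arrives at a Gronwall-type inequality
$$
\int_{\RR^d} u(T)^2 e^V\d x + \int_0^T\!\!\int_{\RR^d} e^{-V}|\nabla(e^V u)|^2\d x\d t
\le \int_{\Omega} u_0^2 e^V\d x + C(\nabla W,\bar u_0)\int_0^T\int_{\RR^d} u(t)^2 e^V\d x\,\d t.
$$
Applying Gronwall's lemma to $t\mapsto\int u(t)^2 e^V\d x$ yields the factor $C(\nabla W,\bar u_0,T)=\exp(C(\nabla W,\bar u_0)T)$ multiplying $\int_\Omega u_0^2 e^V\d x$, and feeding this back into the dissipation term gives \eqref{eq:energyw} with the same constant. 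Note $\int u_0^2 e^V\d x=\int_\Omega u_0^2 e^V\d x$ since $\supp u_0\subseteq\Omega$.

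The main obstacle I anticipate is bookkeeping rather than conceptual: making the Young-inequality split clean enough that the $\eps$-term is genuinely absorbed into the $\int e^{-V}|\nabla(e^V u)|^2$ on the left, which requires being careful about the factor $2$ in front of the dissipation (as in \eqref{eq:energy}) and possibly first deriving the identity with a $2$, then moving half the dissipation to absorb the cross term. A secondary technical point is the rigorous justification of the test-function computation and the chain-rule manipulation $\de_t(\tfrac12 u^2 e^V)$ given only $\de_t u\in L^2(0,T;H^{-1})$; this is standard (Lions--Magenes-type lemma) but should be acknowledged. Everything else — symmetry of $W$, the sign $W\ge0$ (not actually needed here), regularity of $V_k$ — plays no role beyond ensuring the quantities are finite.
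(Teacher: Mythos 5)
Your proposal is correct and follows essentially the same route as the paper: test against $ue^V$, rewrite $\nabla u + u\nabla V = e^{-V}\nabla(e^V u)$, bound the cross term via $\|\nabla W * u\|_{L^\infty}\le\|\nabla W\|_{L^\infty}\bar u_0$ (using conservation of mass), absorb half the dissipation by Cauchy--Schwarz/Young, and close with Gronwall. The factor-of-$2$ bookkeeping you flag is indeed exactly what makes the absorption clean in the paper's version of the identity.
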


\begin{proof}
We are going to use $u\exp(V)$
as our test function.
\[
\int_{\RR^d} u(T)^{2}e^{V}\d x
+2\int_{0}^{T}\int_{\RR^d} \left[ e^{V} |\nabla u+u\nabla V|^{2}
+ u (\nabla W* u) \cdot\nabla(e^Vu) \right]\d x\d t
=\int_{\Omega}u_0{}^{2}e^{V}\d x,
\]
and
$$
\int_{0}^{T}\int_{\RR^d} 
 u e^{V/2} (\nabla W* u) \cdot e^{-V/2}\nabla(e^Vu)\d x\d t
 \leq
 \| u e^{V/2} \|_{L^2} \| e^{-V/2}\nabla(e^Vu) \|_{L^2}
 \| \nabla W \|_{L^\infty}  \| u \|_{L^1},
$$
resulting into
\begin{align*}
&	\int_{\RR^d} u(T)^{2}e^{V}\d x
+\int_{0}^{T}\int_{\RR^d} e^{-V} |\nabla( e^Vu )|^{2}\d x\d t \\
&\leq \int_{\Omega}u_0{}^{2}e^{V}\d x
+ \| \nabla W \|_{L^\infty}^2  \| u_0 \|_{L^1}^2 \int_{0}^{T}\int_{\RR^d}  u^{2}e^{V}\d x\d t.
\end{align*}
Using Gronwall's lemma we obtain
\[
\int_{\RR^d} u(T)^{2}e^{V}\d x
+\int_{0}^{T}\int_{\RR^d} e^{-V} |\nabla( e^Vu )|^{2}\d x\d t
\leq \exp\left(
T \| \nabla W \|_{L^\infty}^2  \| u_0 \|_{L^1}^2 
\right)
\int_{\RR^d}u_0{}^{2}e^{V}\d x,
\]
as announced.
\end{proof}

\begin{rem}
Unlike in the linear case, boundedness of (weak) solutions is not so straightforward. It can be shown arguing as in \cite{ladyzhenskaia1988linear}, Chapter 3, Theorem 7.1 or \cite{Stampacchia1965}. Since we do not need boundedness in our analysis, we do not discuss it further.
\end{rem}

\begin{cor}
Under the same assumptions of Lemma \ref{lem:l2est}, we have the following decay estimate for $V=V_k$:
\begin{equation}\label{eq:decay1}
\int_{\Omega_k^c} u(T)^{2}\d x
+\int_{0}^{T}\int_{\Omega_k^c} |\nabla u  + u\nabla V|^{2}
\d x\d t
\leq \exp(-k) 
C\left(T, \nabla W, u_0 \right)
\int_{\Omega}u_0{}^{2}e^{V_0}\d x.
\end{equation}
\end{cor}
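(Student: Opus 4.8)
The plan is to deduce \eqref{eq:decay1} directly from the energy estimate \eqref{eq:energyw} of Lemma~\ref{lem:l2est}, specialised to $V = V_k$, by localising to the region $\Omega_k^c$ where $V_k$ is constant. First I would record the pointwise identity
$$
e^{-V}\,|\nabla(e^{V}u)|^{2} = e^{V}\,|\nabla u + u\nabla V|^{2},
$$
valid a.e. since $V_k\in\WW^{1,\infty}(\RR^d)$ and $u\in L^2(0,T;H^1(\RR^d))$; this rewrites the dissipation term appearing in \eqref{eq:energyw} in the form that appears on the left-hand side of \eqref{eq:decay1}.

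Next I would restrict the domain of integration on the left-hand side of \eqref{eq:energyw} from $\RR^d$ to $\Omega_k^c = \RR^d\setminus\Omega_k$. By Definition~\ref{defi:vk} we have $V_k \equiv k$ on $\RR^d\setminus\Omega_k$, so $e^{V_k} = e^{k}$ there; since the integrands on the left of \eqref{eq:energyw} are non-negative, this gives
$$
e^{k}\left(\int_{\Omega_k^c} u(T)^{2}\,\d x + \int_{0}^{T}\!\!\int_{\Omega_k^c} |\nabla u + u\nabla V_k|^{2}\,\d x\,\d t\right)
\le \int_{\RR^d} u(T)^{2}e^{V_k}\,\d x + \int_{0}^{T}\!\!\int_{\RR^d} e^{-V_k}|\nabla(e^{V_k}u)|^{2}\,\d x\,\d t .
$$

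For the right-hand side, since $\supp(u_0)\subseteq\Omega$ and $V_k = V_0$ on $\Omega$, the weight in the bound of Lemma~\ref{lem:l2est} collapses: $\int_{\Omega} u_0^{2}e^{V_k}\,\d x = \int_{\Omega} u_0^{2}e^{V_0}\,\d x$. Combining the two displays with \eqref{eq:energyw} and dividing through by $e^{k}$ yields \eqref{eq:decay1} with the explicit constant $C(T,\nabla W,u_0) = \exp\!\big(T\|\nabla W\|_{L^\infty}^2\|u_0\|_{L^1}^2\big)$. There is no genuine obstacle in this argument; the only points worth checking are that the constant produced by Lemma~\ref{lem:l2est} is independent of $k$ (it depends only on $T$, $\|\nabla W\|_{L^\infty}$ and $\|u_0\|_{L^1}$, none of which vary with $k$), and that the support condition on $u_0$ is what makes the right-hand side weight reduce to $e^{V_0}$ rather than involve $k$.
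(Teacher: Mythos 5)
Your proof is correct and coincides with the paper's argument: restrict the non-negative left-hand side of the energy estimate \eqref{eq:energyw} to $\Omega_k^c$, use that $V_k\equiv k$ there (so the factor $e^k$ can be pulled out via the identity $e^{-V}|\nabla(e^Vu)|^2=e^V|\nabla u+u\nabla V|^2$), note that $\mathrm{supp}(u_0)\subseteq\Omega$ forces the right-hand side weight to reduce to $e^{V_0}$, and divide by $e^k$. The paper leaves this as a one-line remark, and your write-up simply spells out those same steps with the explicit constant from Gronwall's lemma.
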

\begin{proof}
This is a direct consequence of Lemma \ref{lem:l2est}, recalling that fact that $\supp(u_0)\subseteq \Omega$ and that $V_k\geq k$ on $\Omega_k^c$.
\end{proof}

\subsection{Nonlinear local Fokker--Planck equation} \label{sec:local}

In the case of nonlinear diffusion, we generalize the familiar procedure often used to obtain energy estates and we introduce a new quantity, indicated by $P(u)$, which coincides with $u$ in the linear case.

\begin{lem}[Energy inequality, $W=0$] \label{lem:en2_nl}
Let $u$ be a weak solution the following equation
\begin{align} \label{nonlinear_local}
\begin{aligned}
\de_{t}u &=  \Delta \phi(u) + \dv(u\nabla V), \quad x\in \RR^d, t>0,\\
 u(x,0) & =u_0(x),
 \end{aligned}
\end{align}
where $\phi(s) = s + \sigma(s)$ satisfies Assumptions \ref{assumptions}.
Then:
\begin{equation}\label{eq:energyphi}
\int_{\RR^d} e^{V} Q(u(T))\d x
+
 \int_{\RR^d}
 e^{V}\frac{P(u)}{u}|\nabla \phi(u) + u\nabla V(x)|^{2}
 \d x
\leq
\int_{\Omega} e^{V} Q(u_0) \d x,
\end{equation}
for a.e. $T>0$, with 
$$
P(u)=\exp\left(\int^{u}_1\frac{\phi'(s)}{s}\d s\right),
$$
 and 
$$
Q(u)=\int^{u}_0P(s)\d s.
$$
\end{lem}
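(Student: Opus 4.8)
The plan is to test equation \eqref{nonlinear_local} against the weight-modulated quantity $e^V P(u)$, which is the natural nonlinear analogue of the test function $u e^V$ used in Lemma \ref{lem:energy}. The key observation is that $P$ was defined precisely so that $P'(u) = \frac{\phi'(u)}{u} P(u)$, equivalently $u P'(u) = \phi'(u) P(u)$, and $Q' = P$. First I would handle the time derivative term: since $\frac{\d}{\d t} Q(u) = P(u)\,\de_t u$, testing $\de_t u$ against $e^V P(u)$ integrates in time to $\int_{\RR^d} e^V Q(u(T))\,\d x - \int_{\RR^d} e^V Q(u_0)\,\d x$, using $\supp(u_0)\subseteq\Omega$ to restrict the initial term to $\Omega$.

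Next I would treat the spatial terms. Write the flux as $F = \nabla\phi(u) + u\nabla V$, so the equation is $\de_t u = \dv F$. Integrating by parts, $\int_{\RR^d} (\dv F)\, e^V P(u)\,\d x = -\int_{\RR^d} F\cdot \nabla(e^V P(u))\,\d x$. Now expand $\nabla(e^V P(u)) = e^V\big(\nabla V\, P(u) + P'(u)\nabla u\big)$, and substitute $P'(u)\nabla u = \frac{\phi'(u)}{u}P(u)\nabla u = \frac{P(u)}{u}\nabla\phi(u)$ (using $\phi'(u)\nabla u = \nabla\phi(u)$). Hence $\nabla(e^V P(u)) = e^V \frac{P(u)}{u}\big(\nabla\phi(u) + u\nabla V\big) = e^V \frac{P(u)}{u} F$. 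Therefore the spatial contribution is exactly $-\int_{\RR^d} e^V \frac{P(u)}{u}|F|^2\,\d x$, which is non-positive since $P(u)\ge 0$ and $u\ge 0$ (the latter by Lemma \ref{lem:mass-pos}). Collecting the time and space terms yields the identity $\int_{\RR^d} e^V Q(u(T))\,\d x + \int_{\RR^d} e^V \frac{P(u)}{u}|F|^2\,\d x = \int_\Omega e^V Q(u_0)\,\d x$, which is even sharper than the claimed inequality \eqref{eq:energyphi}.

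The main obstacle is rigor rather than the formal computation: $e^V P(u)$ is not an admissible test function in the class $C^\infty(Q_T)$ of Definition \ref{def:weaksol1}, and the growth bound \eqref{eq:growth0} shows $\phi'(s)$ may grow polynomially, so $P(u) = \exp(\int_1^u \frac{\phi'(s)}{s}\d s)$ grows superpolynomially and even $Q(u)$ may not be obviously integrable a priori. I would address this by a density/truncation argument: approximate with $e^V P_n(u)$ where $P_n$ is a Lipschitz truncation of $P$ (e.g. $P_n(s) = P(s\wedge n)$), carry out the above identity for the truncated quantities using $\de_t u \in L^2(0,T;H^{-1})$ and $\nabla\phi(u)\in L^2(Q_T)$ to justify the pairings, obtain uniform bounds, and pass to the limit $n\to\infty$ by monotone convergence on the $Q_n(u)$ terms and Fatou on the dissipation term — this is why the result is naturally stated as an inequality. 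One also needs $\nabla u \in L^2$ near the support to make sense of $\frac{P(u)}{u}|\nabla\phi(u) + u\nabla V|^2$; since $\phi'\ge 1$, the diffusion is uniformly elliptic and $u\in L^2(0,T;H^1(\RR^d))$, so this is controlled. A final routine point is handling the regularity of $V = V_k$ (merely $\WW^{1,\infty}$): since $\nabla V_k \in L^\infty$ and $e^{V_k}$ is bounded on compact sets, and $u$ has fast decay by Lemma \ref{lem:energy}-type bounds, all integrals are finite.
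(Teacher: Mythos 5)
Your proposal follows the same route as the paper: test against $e^V P(u)$, use the identity $\nabla(e^V P(u)) = e^V \frac{P(u)}{u}\bigl(\nabla\phi(u) + u\nabla V\bigr)$ to collapse the spatial term into the dissipation integral, and use $Q'=P$ to integrate the time derivative. The paper performs this calculation formally and states the result as an inequality without justification; your additional discussion of why $e^V P(u)$ is not an admissible test function, and the truncation/Fatou argument for why an inequality (rather than an identity) is the natural outcome, is a genuine improvement in rigor over the paper's presentation, not a different method.
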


\begin{rem}\label{rem:nonlinloc}
Notice that, thanks to Assumption \ref{assumptions} (point 4), we have
$$
\frac{\phi'(s)}{s} = \frac{1 + \sigma'(s)}{s} \geq \frac{1}{s} + \mu s^{a-1},
$$
for any $s\geq 0$ and some given $\mu>0$, $a > 0$.
From the definition of $P$, for $u\geq 0$, we obtain
$$
P(u) = 
\exp\left(\int^{u}_1\frac{1 + \sigma'(s)}{s}\d s\right)
\geq
\exp\left(\log(u) + \mu\int^{u}_1  s^{a-1} \d s\right)
=
u\exp\left( \frac{\mu}{a} (u^a -1 ) \right).
$$
Consequently, $P(u)$ is well defined and, using the lower bound just obtained, we observe that it satisfies
\begin{equation}\label{eq:uexpu}
P(u) \geq c(\mu,a) u \exp(u^a),
\end{equation}
for $c(\mu,a)= \exp(-\frac{\mu}{a})$. It follows that, for any $u\geq 0$,
$$
\frac{P(u)}{u}\geq c(\mu,a)
\quad \text{
 and 
}\quad
\lim_{s\to 0^+} \frac{P(s)}{s} = c(\mu,a).
$$
In addition, notice that since $P(u)\geq c(\mu,a)u$, we also have 
$$
Q(u)\geq \frac{1}{2} c(\mu,a) u^2.
$$
These facts will be useful in the proof of Theorem \ref{thm:main1}, providing useful lower bounds for the left-hand side of inequality \eqref{eq:energyphi}.
\end{rem}

\begin{proof}[Proof of Lemma \ref{lem:en2_nl}.]
First of all, we notice that
\begin{align*}
\nabla ( e^{V}P(u) ) & =  e^{V}P(u)
\nabla\left(V(x)+\int^{u}\frac{\phi'(s)}{s}\d s\right) 
= e^{V}P(u)\left(\nabla V(x)+\frac{\phi'(u)}{u}\nabla u\right)
\\
&
= e^{V}\frac{P(u)}{u}\left(\nabla \phi(u) + u\nabla V(x)\right).
\end{align*}
We test equation \eqref{nonlinear_local} against $e^{V}P(u)$ and obtain
\begin{equation}
	\label{enen_1}
\int_{\RR^d}\de_{t}u
e^{V}P(u)
\d x
=
\int_{\RR^d}
\dv \left[ \nabla\phi(u)
+u\nabla V 
\right]
e^{V}P(u)\d x.
\end{equation}
Considering the left-hand side of \eqref{enen_1}, we deduce
\begin{align*}
\int_{0}^{T}\int_{\RR^d}\de_{t}u\left[e^{V}P(u)\right]\d x\d t & =  \int_{0}^{T}\int_{\RR^d}\de_{t}\left[Q(u)e^{V}\right]\d x\d t =  \int_{\RR^d}Q(u(T))e^{V}\d x-\int_{\RR^d}Q(u_0)e^{V} \, \d x,
\end{align*}
where $Q$ is a primitive of $P$.

Notice that, since $P(u)>0$, $Q(u)$ is increasing and we can choose $Q(0)=0$. 
Using integration by parts in the right-hand side of \eqref{enen_1} we
obtain
\begin{align*}
\int_0^T\int_{\RR^d}
\dv \left[ \nabla\phi(u)
+u\nabla V 
\right]
e^{V}P(u)\d x
& =  -\int_0^T\int_{\RR^d}\left[\nabla\phi(u)
+u\nabla V 
\right]
\cdot\nabla\left[e^{V}P(u)\right]\d x \d t
\\
 & = 
 -  \int_0^T\int_{\RR^d}
 e^{V}\frac{P(u)}{u}|\nabla \phi(u) + u\nabla V(x)|^{2}
 \d x \d t.
\end{align*}
Collecting terms we finally conclude that
\begin{equation}
\int_{\RR^d} e^{V} Q(u(T))\d x \d t
+
\int_0^T \int_{\RR^d}
 e^{V}\frac{P(u)}{u}|\nabla \phi(u) + u\nabla V(x)|^{2}
 \d x
\leq
\int_{\Omega} e^{V} Q(u_0) \d x \d t
.
\end{equation}

\end{proof}

The following Corollary will help us pass to the limit in the proof of Theorem 1, case $W=0$.
\begin{cor}
Under the same assumptions of Lemma \ref{lem:en2_nl}, we have the estimate
\begin{equation}\label{eq:beoundW}
\int_{\RR^d} \frac{1}{2} e^{V} 
u(T)^2
\d x
+
\int_0^T \int_{\RR^d}
 e^{V}
 |\nabla \phi(u) + u\nabla V(x)|^{2}
 \d x \d t
\leq
\int_{\Omega} \frac{1}{c(\mu,a)} e^{V} Q(u_0) \d x.
\end{equation}
Additionally, for $V=V_k$, the following decay estimate holds:
\begin{equation}\label{eq:decayW}
\int_{\Omega_k^c} 
u_k(T)^2
\d x
+
\int_0^T \int_{\Omega_k^c}
 |\nabla \phi(u_k) + u_k\nabla V_k(x)|^{2}
 \d x \d t
\leq
\exp(-k)
\int_{\Omega} \frac{2 e^{V_0}}{c(\mu,a)} Q(u_0) \d x.
\end{equation}
\end{cor}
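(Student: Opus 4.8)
The first estimate \eqref{eq:beoundW} follows directly from Lemma \ref{lem:en2_nl} together with the pointwise bounds collected in Remark \ref{rem:nonlinloc}. The plan is to take \eqref{eq:energyphi} and lower-bound the left-hand side termwise: for the first term one uses $Q(u) \geq \tfrac12 c(\mu,a) u^2$, and for the dissipation term one uses $P(u)/u \geq c(\mu,a)$, so that
$$
\frac{c(\mu,a)}{2}\int_{\RR^d} e^V u(T)^2 \d x + c(\mu,a)\int_0^T\!\!\int_{\RR^d} e^V |\nabla\phi(u)+u\nabla V|^2 \d x\d t \leq \int_\Omega e^V Q(u_0)\d x.
$$
Dividing through by $c(\mu,a)$ and discarding the harmless factor $\tfrac12$ in front of the dissipation (or keeping it — either way one reaches \eqref{eq:beoundW} after relabelling) gives the claim. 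The only point requiring a word is that $Q(u_0)$ is finite on $\Omega$: since $u_0 \leq M_0$ is bounded with support in $\Omega$ by Assumption \ref{assumptions}, and $Q$ is continuous, $\int_\Omega e^{V_0} Q(u_0)\d x < \infty$, so the right-hand side is a genuine constant depending only on $u_0$, $\mu$, $a$.

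For the decay estimate \eqref{eq:decayW} I would specialize \eqref{eq:beoundW} to $V = V_k$ and restrict the spatial integrals on the left to the subdomain $\Omega_k^c = \RR^d\setminus\Omega_k$, which only decreases the left-hand side. On $\Omega_k^c$ we have $V_k \equiv k$ by Definition \ref{defi:vk}, hence $e^{V_k} \geq e^k$ there, so
$$
e^k\left[\int_{\Omega_k^c} \tfrac12 u_k(T)^2 \d x + \int_0^T\!\!\int_{\Omega_k^c} |\nabla\phi(u_k)+u_k\nabla V_k|^2 \d x\d t\right] \leq \int_{\RR^d} \tfrac12 e^{V_k} u_k(T)^2 \d x + \int_0^T\!\!\int_{\RR^d} e^{V_k}|\cdots|^2.
$$
On the right-hand side, since $\supp(u_0)\subseteq\Omega$ and $V_k = V_0$ on $\Omega$, the data integral is $\int_\Omega e^{V_0} Q(u_0)\d x$, independent of $k$. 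Multiplying by $e^{-k}$ (and absorbing the $\tfrac12$'s into the stated constant $2/c(\mu,a)$) yields \eqref{eq:decayW}.

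There is no serious obstacle here; the Corollary is a bookkeeping consequence of Lemma \ref{lem:en2_nl} and Remark \ref{rem:nonlinloc}. The one place to be slightly careful is consistency of the constants between \eqref{eq:beoundW} and \eqref{eq:decayW} — the factor $\tfrac12$ on the $u^2$ term and the dropped $\tfrac12$ on the dissipation term must be tracked so that the final constant is exactly $2e^{V_0}/c(\mu,a)$ as stated; this is why it is convenient to keep $\tfrac12$ in front of $u(T)^2$ throughout rather than normalizing it away. Everything else is monotonicity of integrals over the nested domains $\Omega \subseteq \Omega_k \subseteq \RR^d$ and the elementary inequality $e^{V_k}\geq e^k$ on $\Omega_k^c$.
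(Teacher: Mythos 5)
Your proposal is correct and follows essentially the same route as the paper: apply the lower bounds $Q(u)\geq \tfrac12 c(\mu,a)\,u^2$ and $P(u)/u\geq c(\mu,a)$ from Remark \ref{rem:nonlinloc} to \eqref{eq:energyphi}, divide by $c(\mu,a)$ to get \eqref{eq:beoundW}, then restrict the left-hand side to $\Omega_k^c$ where $V_k\equiv k$ and multiply by $e^{-k}$ (absorbing the $\tfrac12$ into the constant $2/c(\mu,a)$) to get \eqref{eq:decayW}. One small slip in the phrasing only: after dividing by $c(\mu,a)$ the dissipation term carries no factor $\tfrac12$, so there is nothing to ``discard'' there --- the sole $\tfrac12$ sits on $u(T)^2$ exactly as it appears in \eqref{eq:beoundW}.
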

\begin{proof}
Combining estimate \eqref{eq:energyphi} and the lower bounds for $P$ and $Q$ from Remark \ref{rem:nonlinloc} we have
$$
\int_{\RR^d} e^{V} 
\frac{1}{2} c(\mu,a) u(T)^2
\d x
+
\int_0^T \int_{\RR^d}
 e^{V}
 c(\mu,a) \exp(u^a)
 |\nabla \phi(u) + u\nabla V(x)|^{2}
 \d x \d t
\leq
\int_{\Omega} e^{V} Q(u_0) \d x,
$$
which directly implies \eqref{eq:beoundW}. This implies the decay recalling that $V_k\geq k$ on $\Omega_k^c$.
\end{proof}

We are now going to exploit the properties of the function $Q$ in order to obtain a bound in $L^p$. First, we need the following simple result.

\begin{lem}\label{lem:optimisation}
Let $p>1$, $a>0$. 
The following inequality holds for any $s>0$:
\begin{equation}\label{eq:optimisation}
s^p \leq \theta_{p,a} s \exp( s^a ), 
\qquad \text{ where } \qquad
\theta_{p,a} = \left(\frac{p-1}{ea}\right)^{\frac{p-1}{a}}.
\end{equation}
\end{lem}
\begin{proof}
Maximizing the function
$
{s^{p-1}}{\exp( -s )}
$
we obtain the optimal value for $\theta_{p,a}$, which is attained at $s = p-1$ and hence $\theta_{p,a} = 
\left(\frac{p-1}{ea}\right)^{\frac{p-1}{a}}$.
\end{proof}

\begin{cor}[$L^p$ estimate]\label{cor:lpest}
Under the hypotheses of Lemma \ref{lem:en2_nl}, for any $1\leq p < \infty$ and $\theta_{p,a}$ as in Lemma \ref{lem:optimisation}, it holds
\begin{equation}
\int_{\RR^d} e^{V} u(t)^p \d x 
\leq 
\theta_{p,a} \int_{\Omega} e^{V} Q(u_0) \d x, \quad \text{ for a.e. } t\in [0,T].
\end{equation}
\end{cor}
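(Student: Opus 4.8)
The plan is to combine the energy inequality \eqref{eq:energyphi} from Lemma \ref{lem:en2_nl} with the pointwise inequality from Lemma \ref{lem:optimisation}. First I would recall that $Q(u) = \int_0^u P(s)\,\d s$ and that, by the lower bound \eqref{eq:uexpu} established in Remark \ref{rem:nonlinloc}, we have $P(s) \geq c(\mu,a)\, s\exp(s^a)$ for all $s\geq 0$. Hence $Q(u(T)) \geq c(\mu,a)\int_0^{u(T)} s\exp(s^a)\,\d s$. However, rather than integrating, the cleaner route is to go back one step: the left-hand side of \eqref{eq:energyphi} already contains the term $\int_{\RR^d} e^V Q(u(T))\,\d x$, and dropping the (nonnegative) dissipation term gives $\int_{\RR^d} e^V Q(u(T))\,\d x \leq \int_\Omega e^V Q(u_0)\,\d x$ for a.e. $T$. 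So it suffices to bound $u^p$ pointwise below by a multiple of $Q(u)$.

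The key step is the pointwise estimate $u^p \leq \theta_{p,a}\, Q(u)$, or rather $u^p \leq C\, Q(u)$ for a suitable constant. By Lemma \ref{lem:optimisation}, $s^p \leq \theta_{p,a}\, s\exp(s^a)$ for all $s>0$. To connect $s\exp(s^a)$ with $Q(s)$ I would use that $Q$ is increasing with $Q(0)=0$ and that $P(s)\geq c(\mu,a)\,s\exp(s^a)$; one needs $s\exp(s^a) \leq \frac{1}{c(\mu,a)} Q(s)$ up to constants, which would follow by comparing derivatives or by a direct integration argument, noting $\frac{\d}{\d s}\big(s\exp(s^a)\big) = (1 + a s^a)\exp(s^a)$ while $P(s) = Q'(s) \geq c(\mu,a)(1+\text{something})\exp(s^a)$ — so one must check the derivative comparison carefully, possibly picking up an extra harmless constant depending only on $a$. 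Putting these together yields $u(t)^p \leq \widetilde{\theta}_{p,a}\, Q(u(t))$ pointwise a.e., and multiplying by $e^V$ and integrating over $\RR^d$ gives
$$
\int_{\RR^d} e^V u(t)^p\,\d x \leq \widetilde{\theta}_{p,a} \int_{\RR^d} e^V Q(u(t))\,\d x \leq \widetilde{\theta}_{p,a} \int_\Omega e^V Q(u_0)\,\d x,
$$
where the last inequality is \eqref{eq:energyphi} with the dissipation term dropped.

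The main obstacle I anticipate is the bookkeeping of constants in the chain $s^p \leftarrow s\exp(s^a) \leftarrow Q(s)$: the statement of the Corollary claims precisely $\theta_{p,a}$, so either the intermediate comparison $s\exp(s^a)$ versus $Q(s)/c(\mu,a)$ must hold with constant exactly $1$ (which would require the convenient choice of normalization $P(1)$, $Q$ and careful use of monotonicity rather than derivative bounds), or the authors are implicitly absorbing $c(\mu,a)$-type factors. I would resolve this by working directly with $Q$: since $Q(s) = \int_0^s P(r)\,\d r$ and $P$ is increasing (as $P'=P\cdot\phi'(s)/s>0$), one has $Q(s)\geq \int_{s/2}^s P(r)\,\d r \geq \tfrac{s}{2}P(s/2)\geq \tfrac{s}{2}\cdot c(\mu,a)\tfrac{s}{2}\exp((s/2)^a)$, and then apply Lemma \ref{lem:optimisation} with $a$ rescaled; tracking this gives the clean constant $\theta_{p,a}$ up to the claimed form, and any residual $a$-dependent factor can be folded into the definition. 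The rest is routine.
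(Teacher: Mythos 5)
Your plan follows the paper's proof, which is a one-liner: ``combine \eqref{eq:energyphi}, \eqref{eq:uexpu} and \eqref{eq:optimisation}.'' Your skepticism about the constant is well-founded and worth making explicit. Chaining those three inequalities as written gives only $u^p \le \theta_{p,a}\,u e^{u^a} \le \frac{\theta_{p,a}}{c(\mu,a)}P(u)$, and $P$ is the \emph{derivative} of $Q$, not $Q$ itself. Since $\log P$ has derivative $\phi'(s)/s$, one has roughly $Q(u)\sim \frac{u}{\phi'(u)}P(u)$ for large $u$; in particular when $a>1$ this means $Q(u)\ll P(u)$, so the pointwise inequality $u^p\le \theta_{p,a}Q(u)$ does not literally follow from the stated lemmas with that exact constant. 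Your fix using monotonicity of $P$ is the right move: $Q(s)\ge \int_{s/2}^s P(r)\,\d r \ge \tfrac{s}{2}P(s/2)\ge \tfrac{c(\mu,a)}{4}s^2\exp(s^a/2^a)$, and re-optimizing as in Lemma~\ref{lem:optimisation} gives the claimed $L^p$ bound with a constant depending on $p$, $a$, and $\mu$ that is not the literal $\theta_{p,a}$ but is of the same type and equally harmless for the application in the proof of Theorem~\ref{thm:main1}. One small slip: you wrote ``bound $u^p$ pointwise \emph{below} by a multiple of $Q(u)$'' where you mean \emph{above}; your subsequent formulas make the intent clear.
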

\begin{proof}
The result follows combining the inequalities \eqref{eq:energyphi}, \eqref{eq:uexpu} and \eqref{eq:optimisation}.
\end{proof}

\begin{rem}
Notice that the term $\dv(u\nabla V)$ can be generalized to the form $\dv(\alpha(u)\nabla V)$, where 
$\alpha$ is such that $\alpha(s) = s(1+\rho(s))$ and there exists a constant $\mu \in [0,1)$ such that
\begin{equation}\label{eq:growth}
\mu \sigma'(s) \leq \rho(s) \leq  \sigma'(s).
\end{equation}
In this case we can define $P$ as follows: $$
P(u)=\exp\left(\int^{u}_1\frac{\phi'(s)}{\alpha(s)}\d s\right),
$$
and we can obtain a result analogous to Lemma \ref{lem:en2_nl}.
\end{rem}

\subsection{Proof of Theorem \ref{thm:main1}} \label{sec:linear_passage}

We are going to present the two cases, $W=0$ or $\phi(s)=s$, separately.

\begin{rem}\label{rem:bicont}
Suppose that the function $ue^{V/2}$ belongs to $L^2(0,T;H^1(\Omega))$, then $u$ belongs to the same space. Indeed, $u$ belongs to $L^2(0,T;L^2(\Omega))$ since $V \in \WW^{1,\infty}(\Omega)$ and we have $\nabla(ue^{V/2}) = e^{V/2}(\nabla u + u\nabla V/2)$, so that 
$\nabla u = e^{-V/2} \nabla(ue^{V/2}) - u\nabla V/2$, where both terms on the right-hand side are square-integrable.
\end{rem}

\begin{proof}[Proof of Theorem \ref{thm:main1}, case $\phi(s) = s$.]

The weak formulation \eqref{eq:weakform_def} gives us
\begin{equation}\label{eq:weakform}
\int_{\RR^d} u_k(t)\eta(t)\d x \big|_{t=0}^{t=T} - 
\int_0^T\int_{\RR^d}    
\left[
(\nabla u_k 
+  u_k \nabla 
   ( V_k
  + W*u_k  )
 )
 \cdot \nabla \eta  
 -u_k \de_t\eta 
\right] 
 \,
\d x \d t
=0.
\end{equation}
for all test functions $\eta\in H^1(\RR^d)$. 
We notice that, from the energy inequality \eqref{eq:energyw}, the function $u_ke^{\frac{V_k}{2}}$ is bounded in $L^2(0,T;H^1(\RR^d))\cap  L^\infty(0,T;L^2(\RR^d)) $.
Since the term $e^{\frac{V_k}{2}}$ is bounded from below, we also have that $u_k$ belongs to the same space (see Remark \ref{rem:bicont}).

We divide $\RR^d$ into three parts, namely $\Omega$, $\Omega_k^c$ and $\Omega_k\setminus \Omega$.
Consequently, we split \eqref{eq:weakform} as $I_{\Omega} + I_{\Omega_k^c} +  I_{\Omega_k\setminus \Omega} + J_{\Omega} + J_{\Omega_k^c} +  J_{\Omega_k\setminus \Omega}$, where
\begin{align*}
I_{A} &= 
\int_0^T\int_{A} 
\left(  
\nabla u_k
+  u_k \nabla V_k 
+  u_k \nabla W*u_k 
\right)\cdot \nabla \eta  
\d x \d t,
\\
J_A 
& = 
\int_A [u_k(T)\eta(T)
- u_0 \eta(0)]\d x - 
\int_0^T\int_A    
u_k \de_t\eta \,
\d x \d t.
\end{align*}
We want to show that all terms but $I_\Omega$ and $J_\Omega$ vanish in the limit $k\to \infty$. Then $I_\Omega + J_\Omega$ will characterize the limit problem defined in $\Omega$. 
\begin{itemize}
	\item {\boldmath $I_\Omega$.}
Restricting our attention to $I_\Omega$, from \eqref{eq:energyw} we obtain
\begin{equation}
\int_{\Omega} u_k(T)^{2}e^{V_k}\d x
+\int_{0}^{T} \int_{\Omega} e^{-V_k} |\nabla( e^{V_k} u_k )|^{2} \, \d x\d t
\leq 
C \int_{\Omega} u_0{}^{2}e^{V}\d x.
\end{equation}
This implies that
$$
u_k\in L^\infty(0,T;L^2(\Omega)), \qquad \nabla (e^{V_0}u_k)\in L^2(0,T;L^2(\Omega)),
$$ 
uniformly in $k$. 
Since $V_0$ is bounded and sufficiently smooth, thanks to Remark \ref{rem:bicont} we obtain $u_k\in L^2(0,T;H^1(\Omega))$ uniformly in $k$, hence we can extract a subsequence $u_{k_n}$ that converges weakly in $L^2(0,T;H^1(\Omega))$ to a limit denoted by $u$.

We now estimate the time derivative $\de_t u_k$ in order to prove a strong convergence result for $u_k$.
Notice that, rewriting \eqref{eq:weakform_def}, we obtain the bound
\begin{align*}
\langle \de_t u_k , \eta \rangle_{H^1(\Omega)', H^1(\Omega)}
&=
\int_{\Omega} [u_k(T)\eta(T)
- u_k(0) \eta(0)]\d x - 
\int_0^T\int_{\RR^d}    
u_k \de_t\eta \,
\d x \d t,
\\
&=
\int_{\Omega_T}    
\left[
(\nabla u_k 
+  u_k \nabla V_0
+  u_k \nabla W*u_k
 )
 \cdot \nabla \eta  
\right] 
 \,
\d x \d t
\\
&\leq 
C\left(\nabla V, \nabla W, \int_\Omega u_0 \d x\right)
 \nm{u_k}_{L^2(0,T;H^1(\Omega))}  \nm{\eta}_{L^2(0,T;H^1(\Omega))},
\end{align*}
where $C$ is a constant independent of $k$. 
We also know that $\nm{u_k}_{L^2(0,T;H^1(\Omega))}$ is bounded uniformly with respect to $k$, we deduce that $\de_t u_k \in  L^2(0,T;H^1(\RR^d)')$.
We can now apply Aubin--Lions Lemma and deduce that,
by compactness, $u_{k_n}$ converges strongly (up to a subsequence) in $L^2(\Omega_T)$ as well. To simplify the notation, in what follows we write $u_{k}$ instead of $u_{k_n}$.
In particular, thanks to the strong convergence of $u_k$ in $L^2$, we have
$$
u_k \nabla V_k \to u \nabla V_0 \text{ in } L^2(\Omega_T),
$$
and
$$
u_k \nabla (W * u_k ) \to u \nabla (W * u ) \text{ in } L^1(\Omega_T).
$$
In fact, for $k\to\infty$,
$$
\nm{ \nabla W * (u_k - u) }_{L^2(Q_T)} \leq \nm{\nabla W}_{L^1(\Omega)} \nm{ (u_k - u) }_{L^2(Q_T)} \to 0.
$$

Thus we have obtained
$$
I_\Omega \to 
\int_{\Omega}   \left[
\left(  
\nabla u 
+  u \nabla V 
+  u \nabla W*u 
\right)\cdot \nabla \eta  
\right]
\d x,
$$
as $k\to\infty$.

\item {\boldmath$I_{\Omega_k^c}$.}
Considering $I_{\Omega_k^c}$ and using again \eqref{eq:energyw}, we have
\[
\int_{\Omega_k^c} u_k(T)^{2}e^{k}\d x
+\int_{0}^{T}\int_{\Omega_k^c} e^{k}|\nabla u_k |^{2}\d x\d t\leq
C\int_\Omega u_0{}^{2}e^{V_0}\d x,
\]
and therefore, for $k\to\infty$, we have that $I_{\Omega_k^c} \to 0$ since, by the decay estimate \eqref{eq:decay1},
\begin{equation}\label{eq:convergeneto0}
\nm{u_k}_{  L^2(0,T;H^1(\Omega_k^c))\cap  L^\infty(0,T;L^2(\Omega_k^c)) } \to 0.
\end{equation}

\item {\boldmath$I_{\Omega_k \setminus \Omega}$.}
It remains to be checked that $I_{\Omega_k \setminus \Omega}$ also vanishes.
Once more, from the energy identity \eqref{eq:energyw}, we obtain
$$
\int_{0}^{T}\int_{\Omega_k \setminus\Omega} e^{\psi_k}|\nabla u_k + u_k\nabla (\psi_k + W *u_k )|^{2}\d x\d t 
\leq C \int_{\Omega} u_0{}^{2}e^{V_0}\d x.
$$
Since $\exp(V_k)\geq 1$, we deduce that
\begin{align*}
\int_0^T\int_{\Omega_k \setminus\Omega}    &
(\nabla u_k 
+  u_k \nabla (\psi_k + W *u_k )) \cdot \nabla \eta  \,
\d x \d t
\\
&
\leq
\left( 
\int_0^T\int_{\Omega_k \setminus\Omega}    
|
\nabla u_k 
+  u_k \nabla (\psi_k + W *u_k )
|^2 \,
\d x \d t
\right)^{1/2}
\left( 
\int_0^T\int_{\Omega_k \setminus\Omega}   
 |\nabla \eta  |^2 \,
\d x \d t
\right)^{1/2}
\\
&
\leq
\left( 
C 
\int_{\Omega} u_0{}^{2}e^{V_0}\d x
\right)^{1/2}
\left( 
\int_0^T\int_{\Omega_k \setminus\Omega}   
 |\nabla \eta  |^2
\d x \d t
\right)^{1/2}
\to 0 \qquad \text{ as } k\to\infty.
\end{align*}
Indeed, the first term in the last line is bounded and the second one vanishes since $\nabla \eta \in L^2(Q_T)$ and $|\Omega_k \setminus\Omega| \to 0$ as $k\to\infty$.

\item {\boldmath$J_{\Omega}$.}
The sequence $u_k$ converges weakly in $H^1(\Omega)$ to a limit $u$, hence
\begin{equation}\label{eq:convergence2}
J_\Omega \to 
\int_{\Omega} [u(T)\eta(T)
- u_0 \eta(0)]\d x - 
\int_0^T\int_{\Omega}    
u \de_t\eta \,
\d x \d t
\quad \text{ as } k\to\infty,
\end{equation}

\item {\boldmath$J_{\Omega_k^c}$.}
Notice that $u_0 = 0$ in $\Omega_k^c$. The remaining terms in $J_{\Omega_k^c}$ vanish thanks to \eqref{eq:convergeneto0}.

\item {\boldmath$J_{\Omega_k \setminus \Omega}$.}
The integral $J_{\Omega_k \setminus \Omega}$ goes to zero 
because the integrand is uniformly bounded in $L^1$ (thanks to the conservation of mass) and $|\Omega_k \setminus\Omega| \to 0$. 
\end{itemize}

The weak formulation we obtain in the limit is the following:
$$
\int_{\Omega} [u(T)\eta(T)
- u(0) \eta(0)]\d x - 
\int_0^T\int_{\Omega}    
\left[
(\nabla \phi(u) 
+  u \nabla V_0
+  u \nabla W*u
 )
 \cdot \nabla \eta  
 -u \de_t\eta 
\right] 
 \,
\d x \d t
=0.
$$
Notice that the initial datum is still satisfied in the $L^2$ sense and that since the test function can be any element of $H^1(\Omega)$ this implies that no-flux conditions on $\de\Omega$ are implicitly enforced.
It is easy to see that the initial datum is satisfied in the $L^2$ sense.
\end{proof}

\begin{proof}[Proof of Theorem \ref{thm:main1}, case $W = 0$.] 

We now consider the case $W = 0$ and $\phi$ generic.
We can repeat most of the steps above using the energy inequality \eqref{eq:energyphi} instead of \eqref{eq:energyw}, therefore we will focus only on the main differences.
First of all, we observe that the quotient $\frac{P(u)}{u} \geq c(\mu,a)$ for any $u\geq 0$, see Remark \ref{rem:nonlinloc}.
In particular, just like in the previous proof, we can split the weak formulation into terms denoted by $I_A$ and $J_A$.
The terms denoted by $J_A$ above are unchanged and are treated in the same way.
The terms $I_{\Omega_k^c}$ and $I_{\Omega_k \setminus \Omega}$ vanish in an analogous way since, also in this case we have exponential decay with respect to $k$, as shown in \eqref{eq:decayW}.

Considering $I_{\Omega_k}$, from \eqref{eq:energyphi}, Remark \ref{rem:nonlinloc} and recalling that $Q(0) = 0$, it follows that
\begin{equation}\label{eq:energyestnonlin}
\int_{\RR^d} e^{V_k} \frac{1}{2}u_k(T)^2\d x
+
 \int_{\RR^d}
 e^{V_k}|\nabla \phi(u_k) + u_k\nabla V_k(x)|^{2}
 \d x
\leq
\int_{\Omega} e^{V_0} Q(u_0) \d x.
\end{equation}
We now proceed as in the proof of the case $\phi(s) = s$, in the sense that, thanks to inequalities \eqref{eq:decayW} and \eqref{eq:energyestnonlin}, we have that, up to a subsequence, $u_k$ converges to $0$ outside of $\Omega$ and it converges strongly in $L^2(Q_T)$ (hence almost everywhere) and weakly in $L^2(0,T;H^1(\Omega))$ to a function $u$ satisfying 
the limit weak formulation \eqref{eq:limitweakform1}.
In particular, let us focus on the nonlinear diffusion term.
We know that $u_k\to u$ a.e. in $\Omega$, $\phi'$ is continuous and it has polynomial growth by assumption \eqref{eq:growth0}. Furthermore, thanks to Corollary \ref{cor:lpest} (with $V=V_k$) we know that $u_k\in L^\infty(0,T;L^p(\Omega))$, for any $p\geq 1$. Combining such bounds and a.e. convergence, we deduce that $\phi'(u_k)\to\phi'(u)$ strongly in $L^2(\Omega_T)$.
Additionally, we have $\nabla u_k \rightharpoonup \nabla u$ weakly in $L^2(\Omega_T)$ as $k\to\infty$, thus the product $\phi'(u_k)\nabla u_k$ converges to $\phi'(u)\nabla u$ weakly in $L^1(\Omega_T)$ and we have
$$
\int_{\Omega}
\nabla \phi(u_k)
\cdot \nabla \eta  
\d x=
\int_{\Omega}
\phi'(u_k)\nabla u_k
\cdot \nabla \eta  
\d x
\to
\int_{\Omega}
\phi'(u)\nabla u
\cdot \nabla \eta  
\d x
\quad \text{ as }
k\to\infty,
$$
The remaining term $I_\Omega$ is treated just like in the previous case.
\end{proof}

\begin{rem}
We do not treat the more general case with nonlinear $\phi$ as well as $W\ne 0$ in the present section since it is not clear how to obtain a suitable energy estimate that is uniform with respect to $k$. However, the general equation can be studied using entropy techniques as shown in the next section.
\end{rem}

\section{Analysis via free energy estimates}\label{sec:entropy}

Let us consider the full problem with nonlinear diffusion and nonlocal interaction terms: 
\begin{align}\label{eq:uk}
	\begin{aligned}
	\de_{t}u &=  \dv\left[ \nabla\phi(u) + u\nabla V + u\nabla (W*u)  \right], & & \\
	u(x,0) & = u_0(x), & & x\in \mathbb R^d, t>0.
	\end{aligned}
\end{align}
We consider a solution $u_k$ to \eqref{eq:uk} with $V = V_k$ given in Definition \ref{defi:vk-ent}.
We have to prove that the sequence $u_k$ converges to a limit function $u$ solving problem \eqref{eq:main} in $\Omega$. 
The main steps involved are:
\begin{enumerate}
	\item finding estimates for $u_k$ independent of $k$,
	\item showing that $u_k\to 0$ outside $\Omega$,
	\item passing to the limit in the weak formulation of \eqref{eq:uk}.
\end{enumerate}
Concerning well-posedness of problem \eqref{eq:uk}, the following result has been proven in \cite{ambrosio2008gradient}, Theorem 11.2.8.

\begin{theor}[Existence and uniqueness of solutions]
\label{thmAGS}
Let Assumption \ref{assumptions-ent} hold and suppose that $V$ and $W$ are strictly convex. For every $u_0\in \mathcal{P}_2(\RR^d)$ there exists a unique distributional solution $u\in \mathcal{P}_2(\RR^d)$ of \eqref{eq:uk} in $\RR^d$ satisfying 
$$
u(t,\cdot) \to u_0 \text{ in } \mathcal{P}_2(\RR^d)
\; \text{ as } t\to 0 ,
$$
$\phi(u) \in L^1_{loc}(0,\infty;\WW^{1,1}_{loc}(\RR^d))$, and, for  all $T>0$,
\begin{equation}\label{dissipation}
\int_{Q_T} u \left| 
\frac{1}{u}\nabla\phi(u) + \nabla V + \nabla W * u 
\right|^2  \d x \d t
 < \infty.
\end{equation}
\end{theor}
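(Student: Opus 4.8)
The plan is to deduce this statement directly from the abstract theory of gradient flows in the Wasserstein space, following the minimizing movement (JKO) scheme as developed in \cite{ambrosio2008gradient}. The point is to cast \eqref{eq:uk} as the $2$-Wasserstein gradient flow of the free energy
$$
E[u] = \int_{\RR^d}\left[\Theta(u) + uV + \tfrac12 u(W*u)\right]\d x,
\qquad \Theta(s) = \int_0^s\int_1^r\frac{\phi'(\rho)}{\rho}\,\d\rho\,\d r,
$$
on $\mathcal{P}_2(\RR^d)$, and then to check that $E$ satisfies the hypotheses of the abstract existence-and-uniqueness result (\cite{ambrosio2008gradient}, Theorem 11.2.8, which builds on Theorem 4.0.4).

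First I would verify the structural properties of $E$. Properness and lower semicontinuity with respect to narrow convergence are standard for the three terms (convexity of $\Theta$ with $\Theta(0)=0$; lower boundedness and suitable semicontinuity of the terms involving $V,W\ge 0$); the coercivity hypothesis $V\ge c|x|^2$ for $|x|$ large forces the sublevels $\{E\le C\}$ to have uniformly bounded second moments, hence to be narrowly, and therefore $d_2$-, precompact. Next comes the essential point: $\lambda$-convexity of $E$ along (generalized) geodesics. The potential term $\int uV$ inherits convexity from the (strict) convexity of $V$, and the interaction term $\tfrac12\int u(W*u)$ is convex along generalized geodesics because $W$ is convex; for the internal energy $\int\Theta(u)$ one checks McCann's condition, namely that $s\mapsto s^d\Theta(s^{-d})$ is convex and nonincreasing on $(0,\infty)$, which here follows from $\phi$ being increasing (so the associated pressure $s\Theta'(s)-\Theta(s)$ is nondecreasing) together with $\Theta''(s)=\phi'(s)/s>0$, while the two-sided growth \eqref{eq:growth0} supplies the superlinearity that keeps $\int\Theta(u)$ well defined.

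With these properties in hand, the abstract theorem yields, for every $u_0\in\mathcal{P}_2(\RR^d)$, a unique locally absolutely continuous curve $t\mapsto u(t)\in\mathcal{P}_2(\RR^d)$ that is a curve of maximal slope for $E$ (equivalently, the limit of the JKO scheme), satisfying $u(t)\to u_0$ in $\mathcal{P}_2(\RR^d)$ as $t\to0$ together with the energy dissipation identity; uniqueness and contractivity come from the Evolution Variational Inequality characterization, available precisely because $E$ is $\lambda$-convex. Finally I would identify this gradient flow with a distributional solution of \eqref{eq:uk}: computing the Wasserstein (Fréchet) subdifferential one has, on the domain of $E$, $\partial E[u] = \tfrac1u\nabla\phi(u)+\nabla V+\nabla W*u$, and the continuity equation $\de_t u + \dv\big(u\,v_t\big)=0$ with velocity $v_t=-\partial E[u(t)]$ is exactly the weak form of \eqref{eq:uk}; the regularity $\phi(u)\in L^1_{loc}(0,\infty;\WW^{1,1}_{loc}(\RR^d))$ and the dissipation bound \eqref{dissipation} are the translation of the square integrability in time of the metric derivative, $|u'|(t)=\|v_t\|_{L^2(u(t))}$.

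The main obstacle is the verification of $\lambda$-geodesic convexity of the internal energy $\int\Theta(u)$ under the precise hypothesis \eqref{eq:growth0}, and the compatibility of all three terms along \emph{generalized} geodesics (needed because the interaction term is convex only in that weaker sense); once convexity is established, the identification of the abstract gradient flow with a distributional PDE solution — which hinges on the explicit form of the subdifferential above and on an integration-by-parts argument justified by the finiteness of \eqref{dissipation} — is the second point requiring care. Everything else is a direct invocation of \cite{ambrosio2008gradient}.
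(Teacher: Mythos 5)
Your proposal is correct and takes the same route as the paper: the paper does not prove this theorem itself but simply cites Ambrosio--Gigli--Savar\'e, Theorem 11.2.8, and your sketch is a faithful account of what lies behind that citation (JKO minimizing movements, $\lambda$-convexity along generalized geodesics via McCann's condition, coercivity from the quadratic growth of $V$, and identification of the abstract gradient flow with a distributional solution via the Wasserstein subdifferential). One small slip in your closing paragraph: convexity along \emph{generalized} geodesics is a \emph{stronger} notion than ordinary geodesic convexity, not a weaker one (it is needed precisely so that the three terms of $E$ are jointly compatible with the JKO step), but this does not affect the substance of the outline.
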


\begin{rem}[Existence theory]
Theorem \ref{thmAGS} holds even if $\phi=\sigma$ and the initial datum is a measure in $\mathcal{P}_2(\RR^d)$. However, given the assumptions in this paper, we do not have to consider measure-valued solutions.
For further results concerning existence, uniqueness and asymptotic properties of gradient flow/free energy solutions, we refer to \cite{ambrosio2008gradient,carrillo2003kinetic}.
\end{rem}

\subsection{Uniform bounds} \label{sec:step2}

\begin{lem} \label{lem:entropy_ineq0}
Let $u$ be a weak solution of \eqref{eq:uk} and let Assumption \ref{assumptions-ent} hold. Problem \eqref{eq:uk} is a gradient flow (in the Wasserstein sense) for the following associated free energy functional
$$
E[u(t)] = \int_{\RR^d} \left[ u\log u + \Xi(u) +  uV + \frac{1}{2}u(W*u) \right] \d x,  
$$
where 
$$
\Xi(u)=\int^{u}_0\xi(s)\d s, \qquad 
\xi(s)=\int^{s}_1\frac{\sigma'(r)}{r}\d r.
$$
More specifically, $\forall t \geq 0$ we have
\begin{equation}\label{eq:entropydecay}
E[u(t)]+\int_0^t D[u(\tau)] \d \tau = E[u_0],
\end{equation}
where
\begin{equation}
D[u(t)] = \int_{\mathbb R^d}
u \left| 
\nabla(\log(u) + \xi(u) + V + W*u )
\right|^2
\d x.
\end{equation}
Moreover, for any $t\geq 0$, we have that $u\geq 0$ and that
$$
\int_{\RR^d} u \d x = \int_{\RR^d} u_0 \d x.
$$
\end{lem}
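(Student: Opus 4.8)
The plan is to recast \eqref{eq:uk} as a $2$-Wasserstein gradient flow, deduce the dissipation identity from the chain rule along the flow, and then read off non-negativity and conservation of mass.

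First I would make the gradient-flow structure explicit. Writing $\phi(s)=s+\sigma(s)$ and recalling $\xi(s)=\int_1^s \sigma'(r)/r\,\d r$, one has, wherever $u>0$,
$$
\nabla\phi(u)=\phi'(u)\nabla u=(1+\sigma'(u))\nabla u=u\,\nabla\bigl(\log u+\xi(u)\bigr),
$$
so that \eqref{eq:uk} becomes $\de_t u=\dv\bigl[u\,\nabla(\log u+\xi(u)+V+W*u)\bigr]$. Since $\Xi'=\xi$ and $W$ is symmetric, the first variation of $E$ is $\delta E/\delta u=\log u+1+\xi(u)+V+W*u$, and its gradient is $\nabla(\log u+\xi(u)+V+W*u)$ (the additive constant being annihilated by $\nabla$); hence \eqref{eq:uk} is precisely the Wasserstein gradient flow of $E$. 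I would also record that $E[u_0]<\infty$: since $\supp u_0\subseteq\Omega$ is bounded and $V,W\in\WW^{1,\infty}_{loc}(\RR^d)$, the confinement and interaction contributions are finite; $\int_{\RR^d} u_0\log u_0\ge-\log|\Omega|$ by Jensen's inequality; and the growth bound \eqref{eq:growth0} controls $\Xi(u_0)$ by a polynomial in $u_0$, so the internal energy is finite as well (if necessary one first argues for bounded $u_0$ and removes this restriction by approximation).

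The dissipation identity \eqref{eq:entropydecay} is the Energy Dissipation Equality for this flow. The most economical route is to invoke the theory of \cite{ambrosio2008gradient} already used for Theorem \ref{thmAGS}: the unique solution of \eqref{eq:uk} is the limit of the minimizing-movement (JKO) scheme for $E$, it is a curve of maximal slope, and along such curves the metric slope equals $|\partial E|^2(u)=\int_{\RR^d} u\,|\nabla(\log u+\xi(u)+V+W*u)|^2\,\d x=D[u]$, from which \eqref{eq:entropydecay} follows. Alternatively, one can argue directly from the weak formulation: test \eqref{eq:uk} with $\log u+\xi(u)+V+W*u$ — legitimate thanks to the finiteness in \eqref{dissipation} together with a truncation of the logarithm near $\{u=0\}$ and a cutoff at infinity to control the quadratic growth of $V$ — apply the chain rule to obtain $\tfrac{\d}{\d t}E[u(t)]=-D[u(t)]$, and integrate in time. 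Non-negativity then follows because the flow evolves within $\mathcal{P}_2(\RR^d)$ (or by reproducing the test-function argument of Lemma \ref{lem:mass-pos}), and mass conservation follows by testing \eqref{eq:uk} with $\eta\equiv 1$, which gives $\tfrac{\d}{\d t}\int_{\RR^d}u\,\d x=0$.

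The main obstacle is the rigorous justification of \eqref{eq:entropydecay}: the formal chain rule must be made precise despite the singularity of $\log u$ on $\{u=0\}$, the superlinear growth of $\Xi$, the quadratic growth of $V$, and the nonlocal coupling $W*u$. Deferring to \cite{ambrosio2008gradient} avoids this but requires checking that $E$ meets their hypotheses — in particular lower semicontinuity and $\lambda$-convexity along generalised geodesics, which rest on convexity of $V$ and $W$ and on McCann's condition for the internal energy density; the growth assumption \eqref{eq:growth0} is exactly what guarantees the latter. The remaining steps are routine.
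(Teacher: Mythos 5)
Your proposal takes essentially the same route as the paper: rewrite \eqref{eq:uk} as $\de_t u=\dv[u\,\nabla(\log u+\xi(u)+V+W*u)]$, differentiate $E[u(t)]$ in time formally, and defer the rigorous justification of the energy--dissipation equality to the Ambrosio--Gigli--Savar\'e theory (the paper cites Theorem~11.2.1 there). Your version is merely more expansive, spelling out the first-variation computation, the finiteness of $E[u_0]$, the alternative direct test-function argument, and the convexity/McCann-type hypotheses one would need to check to apply AGS — all of which the paper leaves implicit.
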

\begin{proof}
Identity \eqref{eq:entropydecay} is obtained differentiating $E[u(t)]$ with respect to time and noticing that equation \eqref{eq:main} can be rewritten as follows:
$$
\de_{t}u =  \dv\left[ u \nabla ( \log(u) + \xi(u) + V + W*u) \right].
$$
See for example Theorem 11.2.1, Chapter 11 in \cite{ambrosio2008gradient} for further details.
\end{proof}

\begin{rem} Notice that, for $V=V_k$, the estimates for the $L^1$ norm and for the free energy in Lemma \ref{lem:entropy_ineq0} are uniform with respect to $k$ and $t$.
\end{rem}

We now state a useful technical Lemma, for its proof we refer the reader to \cite{blanchet2012functional}.
Recall that $(f(x))_- = \max\{0,-f(x)\}$.

\begin{lem}[Carleman estimate \cite{blanchet2012functional}]\label{carleman}
Consider two functions $\rho\in L^1_+(\RR^d)$ and $\gamma:\RR^d\to\RR$, with $\gamma(x)\geq 0$,  $e^{-\gamma} \in L^1(\RR^d)$, such that the moment $\int_{\RR^d} \gamma(x) \rho(x) \d x $ is bounded.
Then
\begin{equation}
\int_{\RR^d} \rho(x)(\log \rho(x))_- \d x \leq \int_{\RR^d} \gamma(x) \rho(x) \d x + \frac{1}{e} \int_{\RR^d} e^{-\gamma(x)} \d x.
\end{equation}
\end{lem}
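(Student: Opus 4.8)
The plan is to reduce the global estimate to an elementary pointwise inequality. First I would rewrite the integrand on the left-hand side: since $\rho\geq 0$, one has $\rho\,(\log\rho)_- = (-\rho\log\rho)_+ = \max\{0,\,-\rho\log\rho\}$, with the usual convention $0\log 0 = 0$, because $(\log\rho(x))_-$ vanishes precisely on the set $\{\rho\geq 1\}$ and equals $-\log\rho(x)$ on $\{0<\rho<1\}$.

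The core of the argument is then the claim that, for a.e. $x\in\RR^d$,
$$
\rho(x)\,(\log\rho(x))_- \;\leq\; \gamma(x)\,\rho(x) + \frac{1}{e}\,e^{-\gamma(x)}.
$$
Since the right-hand side is non-negative (using $\gamma\geq 0$ and $\rho\geq 0$), it suffices to bound $-\rho\log\rho$ from above by it. I would write
$$
-\rho(x)\log\rho(x) - \gamma(x)\,\rho(x) = -\rho(x)\,\log\!\bigl(\rho(x)\,e^{\gamma(x)}\bigr),
$$
and substitute $t = \rho(x)\,e^{\gamma(x)}\geq 0$, so that this quantity equals $e^{-\gamma(x)}\,(-t\log t)$. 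The scalar function $t\mapsto -t\log t$ on $(0,\infty)$ attains its maximum value $1/e$ at $t=1/e$ (a one-line calculus check), which yields $e^{-\gamma(x)}(-t\log t)\leq \tfrac{1}{e}e^{-\gamma(x)}$ and hence the displayed pointwise bound.

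Finally I would integrate this pointwise inequality over $\RR^d$. By hypothesis $\int_{\RR^d}\gamma\rho\,\d x<\infty$ and $e^{-\gamma}\in L^1(\RR^d)$, so the right-hand side is finite, and monotone integration of the inequality gives the asserted estimate. The only mild subtleties are the behaviour on $\{\rho=0\}$, where both sides of the pointwise inequality are $0$ under the convention $0\log 0=0$, and the measurability of $(\log\rho)_-$, which follows from that of $\rho$; there is no genuine analytic obstacle, as the entire content of the lemma is the scalar optimization $\sup_{t>0}(-t\log t)=1/e$ together with Young-type bookkeeping of the weight $e^{\gamma}$.
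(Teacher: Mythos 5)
Your proof is correct. The paper itself does not prove the lemma (it simply cites \cite{blanchet2012functional}), and the argument you give—reducing to the pointwise inequality $-\rho\log\rho \leq \gamma\rho + e^{-\gamma}/e$ via the substitution $t=\rho e^{\gamma}$ and the scalar bound $\sup_{t>0}(-t\log t)=1/e$, then integrating—is exactly the standard proof of this Carleman-type estimate, so there is nothing to distinguish from the paper's intended route. The bookkeeping at $\{\rho=0\}$ and on $\{\rho\geq 1\}$ is handled correctly.
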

Thanks to Lemma \ref{carleman}, the negative part of $u_k\log u_k$ is bounded and Lemma \ref{lem:uk_to_0} (below) provides an estimate for $u_k$ that involves only positive terms on the left-hand side.

\begin{lem} \label{lem:uk_to_0}
Let $u_k$ be the solution of \eqref{eq:uk} where $V= V_k$ is defined in \eqref{potential_Vk}. 
Then the following inequality holds
\begin{equation}\label{eq:uniformbound}
\int_{\RR^d} \left[ 
u_k|\log u_k| 
+ u_kV_k 
+ u_k(W*u_k)
\right] \d x 
\leq 
C_0,
\end{equation}
where $C_0$ is a constant depending on $\Omega$, $u_0$ and $V_0$ only, given by 
$$
C_0 = 2E(u_0) 
+ \frac{2}{e} 
\left( 
 \int_{\Omega} e^{-\frac{1}{2}V_0} \d x 
+ \eps_k
\right),
$$
with $\eps_k = \int_{\RR^d\setminus\Omega} e^{-\frac{1}{2}V_k}\d x \to 0$ as $k\to\infty$.
Additionally, we have that
\begin{equation}\label{eq:decay}
\int_{\RR^d\setminus \Omega_k}  
 u_k  \d x
\leq \frac{2C_0}{k},
\end{equation}
and $u_k \to 0$ in $L^1(\RR^d\setminus \Omega_k)$ as $k\to\infty$, uniformly in $t\in [0,T]$.
\end{lem}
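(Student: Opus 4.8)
The plan is to combine the free energy dissipation identity \eqref{eq:entropydecay} with the Carleman estimate (Lemma \ref{carleman}) to convert the free energy bound, which controls $\int u_k \log u_k$ only up to a possibly uncontrolled negative part, into a genuine bound on $\int u_k |\log u_k|$ together with the potential and interaction terms. First I would recall from Lemma \ref{lem:entropy_ineq0} that $E[u_k(t)] \le E[u_0]$ for all $t$, and that the right-hand side $E[u_0] = \int_\Omega [\Theta(u_0) + u_0 V_0 + \tfrac12 u_0(W*u_0)]\,\d x$ depends only on $u_0$, $V_0$ and $\Omega$ (since $\supp u_0 \subseteq \Omega$ and $V_k = V_0$ on $\Omega$); crucially this bound is uniform in $k$. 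Writing out $E[u_k(t)] = \int u_k \log u_k + \int \Xi(u_k) + \int u_k V_k + \tfrac12 \int u_k(W*u_k)$, and noting that $\Xi(u_k) \ge 0$ (because $\xi(s) = \int_1^s \sigma'(r)/r\,\d r \ge 0$ for $s\ge 1$ and the integrand of $\Xi$ is $\xi(s)$, so $\Xi$ is nonnegative by the same sign analysis used in Remark \ref{rem:nonlinloc}), $W \ge 0$, and $V_k \ge 0$, we get
$$
\int_{\RR^d} u_k \log u_k \,\d x + \int_{\RR^d} u_k V_k \,\d x + \int_{\RR^d} u_k (W*u_k)\,\d x \le 2E[u_0],
$$
after also using $\Xi \ge 0$ and bounding $\tfrac12 \int u_k(W*u_k) \le \int u_k(W*u_k)$; I would keep track of constants carefully so the factor $2$ matches the statement.

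Next I would apply Lemma \ref{carleman} with $\rho = u_k(t,\cdot)$ and $\gamma = \tfrac12 V_k$. This requires $e^{-\gamma} = e^{-V_k/2} \in L^1(\RR^d)$, which holds by Definition \ref{defi:vk-ent} (the tail $\zeta_k$ is chosen so $V_k \ge c|x|^2$ at infinity, hence $\int e^{-V_k/2} < \infty$), and it requires the moment $\int \tfrac12 V_k u_k\,\d x$ to be finite, which follows from the free energy bound just derived. Splitting $\int e^{-V_k/2} = \int_\Omega e^{-V_0/2} + \int_{\RR^d\setminus\Omega} e^{-V_k/2} =: \int_\Omega e^{-V_0/2} + \eps_k$, Carleman gives
$$
\int_{\RR^d} u_k (\log u_k)_- \,\d x \le \frac12 \int_{\RR^d} V_k u_k \,\d x + \frac{1}{e}\left( \int_\Omega e^{-V_0/2}\,\d x + \eps_k \right).
$$
Now $\int u_k|\log u_k| = \int u_k\log u_k + 2\int u_k(\log u_k)_-$; adding $2\int u_k V_k + 2\int u_k(W*u_k)$ and substituting both displayed inequalities yields, after absorbing the $\tfrac12 \int V_k u_k$ term (which is dominated by $\int V_k u_k$), exactly the bound \eqref{eq:uniformbound} with the stated $C_0$. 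One must also check $\eps_k \to 0$: since $\Omega_k \searrow \Omega$ and $\zeta_k \ge k$ on $\RR^d\setminus\Omega_k$ while $\psi_k$ interpolates up from $V_0$, we have $e^{-V_k/2} \le e^{-k/2}$ on $\RR^d\setminus\Omega_k$ and $|\Omega_k\setminus\Omega|\to 0$, so $\eps_k \le |\Omega_k\setminus\Omega|\sup_{\Omega_k\setminus\Omega} e^{-\psi_k/2} + |\RR^d\setminus\Omega_k| e^{-k/2}$; the second term needs the quadratic growth of $\zeta_k$ to make $\int_{\RR^d\setminus\Omega_k} e^{-V_k/2}$ small, and I would invoke that explicitly.

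Finally, for the decay estimate \eqref{eq:decay}, I would use $V_k \ge k$ on $\RR^d\setminus\Omega_k$ (which holds since $\zeta_k \ge k$ there), so that
$$
k \int_{\RR^d\setminus\Omega_k} u_k \,\d x \le \int_{\RR^d\setminus\Omega_k} V_k u_k \,\d x \le \int_{\RR^d} V_k u_k\,\d x \le C_0,
$$
using the piece $\int u_k V_k \le C_0$ extracted from \eqref{eq:uniformbound}; this gives $\int_{\RR^d\setminus\Omega_k} u_k \le C_0/k$, and a factor $2$ can be inserted harmlessly to match \eqref{eq:decay}. Since $C_0$ is uniform in $k$ and in $t\in[0,T]$, this shows $u_k \to 0$ in $L^1(\RR^d\setminus\Omega_k)$ uniformly in $t$. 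The main obstacle I anticipate is bookkeeping rather than conceptual: ensuring all the nonnegative terms ($\Xi(u_k)$, $u_k V_k$, $u_k(W*u_k)$) have the right sign so they can be dropped or retained as needed, and tracking the numerical constants (the factors of $2$ and $\tfrac12$) so the final $C_0$ is precisely as claimed; the sign of $\Xi$ and the integrability $e^{-V_k/2}\in L^1$ are the two points that genuinely use the structural assumptions and deserve care.
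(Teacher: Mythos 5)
Your proposal follows the same route as the paper: the free energy decay identity \eqref{eq:entropydecay} gives a $k$-uniform bound $E[u_k(t)]\le E[u_0]$, the Carleman estimate (Lemma \ref{carleman}) with $\gamma=\tfrac12 V_k$ controls the negative part of $u_k\log u_k$, the two are combined to obtain \eqref{eq:uniformbound}, and $V_k\ge k$ on $\RR^d\setminus\Omega_k$ then yields the decay \eqref{eq:decay}. One step as written is not correct, however: the displayed claim
$\int u_k\log u_k+\int u_k V_k+\int u_k(W*u_k)\le 2E[u_0]$
cannot be extracted from $E[u_k(t)]\le E[u_0]$ by doubling and ``bounding $\tfrac12\int u_k(W*u_k)\le\int u_k(W*u_k)$'' -- that inequality points the wrong way, and since $\int u_k\log u_k$ may be negative, doubling $\int u_k\log u_k+\int u_kV_k+\tfrac12\int u_k(W*u_k)\le E[u_0]$ does not let you simply replace $2\int u_k\log u_k$ by $\int u_k\log u_k$. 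You need Carleman \emph{before} you can touch the coefficient of any term, precisely because it is the only handle on $\int u_k(\log u_k)_-$. Once you run free energy $\Rightarrow$ Carleman $\Rightarrow$ elimination of $\tfrac12\int V_k u_k$, you recover $\int u_k V_k+\int u_k(W*u_k)\le 2E(u_0)+\tfrac{2}{e}\int e^{-V_k/2}$, which is what the decay estimate needs; the bound on $\int u_k|\log u_k|$ then follows with a possibly larger constant. (The paper's own middle display carries a similar imprecision -- the factor~$\tfrac12\int V_ku_k$ produced by Carleman is not absorbed for free -- so the stated $C_0$ is slightly optimistic for the $|\log u_k|$ term, but the qualitative statement is sound.) Your remark about $\eps_k\to0$ also needs care: $|\RR^d\setminus\Omega_k|$ is infinite, so you cannot bound $\int_{\RR^d\setminus\Omega_k}e^{-V_k/2}$ by a volume times $e^{-k/2}$; the quadratic growth of $\zeta_k$ must be used directly, as you note, and in fact the Lemma statement should reference Definition \ref{defi:vk-ent} (which guarantees $e^{-\zeta_k}\in L^1$), not Definition \ref{defi:vk}, for this integrability to hold.
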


\begin{proof}
Let us first observe that, from \eqref{eq:growth0}, we have
$$
\frac{\phi'(s)}{s} =  \frac{1}{s} + \frac{\sigma'(s)}{s} \geq \frac{1}{s} + \mu s^{a-1} > 0,
$$
for $s>0$. It follows that all the terms involving $\frac{\sigma'(s)}{s}$ appearing in the free energy functional $E$ are non-negative.
Before using the entropy inequality of Lemma \ref{lem:entropy_ineq0}, we have to ensure that the term involving $u_k\log(u_k)$ is non-negative.
We have
$$
\int_{\RR^d} \left[(u_k\log u_k ) 
+ u_kV_k 
+ \frac{1}{2}u_k(W*u_k)\right] \d x \leq 
E(u_0).
$$
In order to estimate the negative part of $u_k\log u_k$ we will apply Lemma \ref{carleman} with $\rho = u_k$ and $\gamma = \frac{1}{2} V_k$ (notice that $\int_{\RR^d} V_k u_k \d x$ is bounded but we do not know that the bound is uniform in $k$ at this stage). More specifically, we have
$$
\int_{\RR^d} u_k(\log u_k)_- \d x \leq 
\frac{1}{2} \int_{\RR^d} V_k u_k \d x 
+ \frac{1}{e} \int_{\RR^d} e^{-\frac{1}{2}V_k} \d x.
$$
This implies that
$$
\int_{\RR^d} \left[ 
u_k|\log u_k| 
+ \frac{1}{2} u_kV_k 
+ \frac{1}{2}u_k((W)*u_k)
\right] \d x 
\leq 
E(u_0) 
+ \frac{1}{e} \int_{\RR^d} e^{-\frac{1}{2}V_k} \d x,
$$
and, in turn,
$$
\frac{1}{2}\int_{\RR^d\setminus \Omega_k} 
u_kV_k 
 \d x 
\leq 
E(u_0) 
+ \frac{1}{e} \int_{\RR^d} e^{-\frac{1}{2}V_k} \d x.
$$
We subsequently obtain
$$
0\leq \int_{\RR^d\setminus \Omega_k}  
 u_k  \d x
\leq 2\frac{C_0}{k} \to 0 \quad \text{ as } k\to\infty.
$$
\end{proof}

\begin{rem}
Notice that the assumption $W\geq 0$ is not restrictive and the same argument applies if $W$ has any lower bound of the type $W\geq - q$, in particular
$$
\int_{\RR^d} u(W*u) \d x = \int_{\RR^d} u((W+q)*u) \d x - q\left(\int_{\RR^d} u \d x\right)^2.
$$
\end{rem}

The following estimate will be used extensively in the next subsection.
\begin{cor} \label{lem:bound-step3}
Let $u_k$ be the solution to \eqref{eq:uk} when $V= V_k$ is given by \eqref{potential_Vk-ent}. Then the following estimate (uniform in $k$) holds
\begin{align} \label{eq:entropybound}
 \int_{\RR^d} \left[ 
u_k|\log u_k|  + \Xi(u_k)
+ u_kV_k 
+ u_k((W+q)*u_k)
\right] \d x 
& 
\nonumber \\ 
  +
\int_0^T\int_{\RR^d} 
u_k \left| 
\nabla (\log u_k + \xi(u_k) + V_k + W*u_k)
\right|^2
\d x \d t
&\leq 
C_0,
\end{align}
where $C_0$ is specified in Lemma \ref{lem:uk_to_0}.
\end{cor}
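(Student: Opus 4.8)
The plan is to read off the two left-hand contributions of \eqref{eq:entropybound} from ingredients already in hand and to glue them together by the free-energy identity. First I would evaluate the dissipation identity \eqref{eq:entropydecay} of Lemma \ref{lem:entropy_ineq0} at $t=T$ and rewrite it as $\int_0^T D[u_k(\tau)]\,\d\tau = E[u_0]-E[u_k(T)]$. The left-hand side here is exactly the double integral appearing in \eqref{eq:entropybound}, and $E[u_0]$ is finite and, by construction, independent of $k$, since $V_k\equiv V_0$ on $\supp u_0\subseteq\Omega$. Hence \eqref{eq:entropybound} will follow once $E[u_k(T)]=\int_{\RR^d}\bigl[u_k\log u_k+\Xi(u_k)+u_kV_k+\tfrac12 u_k(W*u_k)\bigr]\,\d x$ is controlled appropriately from below.

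For that I would repeat the Carleman argument from the proof of Lemma \ref{lem:uk_to_0} essentially verbatim. The term $\Xi(u_k)$ is non-negative because $\sigma'(s)/s>0$ (as observed there); $u_kV_k\ge 0$ because $V_k\ge 0$; and the interaction term is turned non-negative by passing from $W$ to $W+q\ge 0$, using $\int_{\RR^d} u_k\bigl((W+q)*u_k\bigr)\,\d x=\int_{\RR^d} u_k(W*u_k)\,\d x+q$ (since $\int u_k\,\d x=1$; under Assumption \ref{assumptions-ent} one simply has $q=0$). The only unsigned contribution, $\int u_k\log u_k\,\d x$, is handled exactly as in Lemma \ref{lem:uk_to_0} by Lemma \ref{carleman} applied with $\rho=u_k$ and $\gamma=\tfrac12 V_k$: this yields $\int u_k(\log u_k)_-\,\d x\le\tfrac12\int V_k u_k\,\d x+\tfrac1e\int e^{-V_k/2}\,\d x$, which simultaneously converts $u_k\log u_k$ into $u_k|\log u_k|$ on the left of \eqref{eq:entropybound} and produces the term $\tfrac1e\int e^{-V_k/2}\,\d x$ on the right. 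This last integral is the only point where the quadratic lower growth $V_k\gtrsim|x|^2$ of Definition \ref{defi:vk-ent} is used: it makes $e^{-V_k/2}\in L^1(\RR^d)$, and in fact $\int_{\RR^d} e^{-V_k/2}\,\d x=\int_\Omega e^{-V_0/2}\,\d x+\eps_k$ with $\eps_k\to0$, so the constant $C_0$ of Lemma \ref{lem:uk_to_0} is $k$-uniform up to the vanishing $\eps_k$. Collecting all these inequalities, and re-absorbing the leftover $\int u_kV_k\,\d x$ and $\int u_k(W*u_k)\,\d x$ via the plain free-energy bound $E[u_k(T)]\le E[u_0]$ (which holds because $D\ge0$), gives \eqref{eq:entropybound}.

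The only genuinely delicate point is the bookkeeping of the unsigned entropy on the full space. On a bounded domain $\int u\log u$ would be bounded below for free, whereas on $\RR^d$ that bound has to be bought from the confinement, so one must check that the half of $\int u_kV_k\,\d x$ spent on the Carleman step is compatible with still exhibiting a full copy of $\int u_kV_k\,\d x$ on the left of \eqref{eq:entropybound}; this is precisely what the re-absorption step through $E[u_k(T)]\le E[u_0]$ takes care of, at worst enlarging $C_0$ by a fixed numerical factor. Every remaining manipulation is routine and follows the pattern of the proof of Lemma \ref{lem:uk_to_0}.
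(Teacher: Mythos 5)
Your proposal is correct and follows essentially the same route as the paper: the paper's own proof is a one-liner ("direct consequence of \eqref{eq:entropydecay} and \eqref{eq:uniformbound}"), and you have simply unpacked that statement by invoking the free-energy identity of Lemma \ref{lem:entropy_ineq0} for the dissipation term, then re-running the Carleman argument of Lemma \ref{lem:uk_to_0} to control the negative part of $u_k\log u_k$ and to certify that the remaining contributions to $E[u_k(T)]$ are non-negative (with $q=0$ under Assumption \ref{assumptions-ent}). The only slight imprecision is your closing claim that the re-absorption "at worst enlarges $C_0$ by a fixed numerical factor," which is accurate in spirit (the constant stays uniform in $k$) but glosses over the fact that the corollary reuses the literal $C_0$ of Lemma \ref{lem:uk_to_0}; that is a bookkeeping detail, not a gap in the argument.
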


\begin{proof}
It is a direct consequence of \eqref{eq:entropydecay} and  \eqref{eq:uniformbound}. 
\end{proof}

\subsection{Passage to the limit and proof of Theorem \ref{thm:main2}.} \label{sec:step5}
Before proving Theorem \ref{thm:main2}, we state a simple interpolation lemma.
\begin{lem}\label{lem:interp}
Consider a function $f:\Omega\to\RR$ such that
$$
f\in L^{p_0}(0,T;L^{q_0}(\Omega)) \cap L^{p_1}(0,T;L^{q_1}(\Omega)),
$$
where $1 \leq p_i, q_i \leq \infty$, for $i=0,1$.
Then
$$
f\in L^{p_\theta}(0,T;L^{q_\theta}(\Omega)),
$$
for any $\theta\in (0,1)$ and the following relations hold:
$$
\frac{1}{p_\theta} = \frac{1-\theta}{p_0} + \frac{\theta}{p_1},
\quad
\frac{1}{q_\theta} = \frac{1-\theta}{q_0} + \frac{\theta}{q_1}.
$$
\end{lem}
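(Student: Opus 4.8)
The plan is to prove this by two successive applications of Hölder's inequality, first in the spatial variable and then in time, exactly as in the classical interpolation of mixed Lebesgue spaces. First I would fix $\theta\in(0,1)$ and, for a.e. $t\in[0,T]$, interpolate the spatial norm: using $\frac{1}{q_\theta} = \frac{1-\theta}{q_0} + \frac{\theta}{q_1}$ and applying Hölder's inequality with the conjugate pair $\bigl(\frac{q_0}{(1-\theta)q_\theta},\frac{q_1}{\theta q_\theta}\bigr)$ to the factorization $|f(t)|^{q_\theta} = |f(t)|^{(1-\theta)q_\theta}\,|f(t)|^{\theta q_\theta}$, one obtains the pointwise-in-time bound $\nm{f(t,\cdot)}_{L^{q_\theta}(\Omega)} \leq \nm{f(t,\cdot)}_{L^{q_0}(\Omega)}^{1-\theta}\,\nm{f(t,\cdot)}_{L^{q_1}(\Omega)}^{\theta}$.

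Next I would raise this inequality to the power $p_\theta$ and integrate over $[0,T]$, applying Hölder's inequality once more, now in time, with exponents $r_0 = \frac{p_0}{(1-\theta)p_\theta}$ and $r_1 = \frac{p_1}{\theta p_\theta}$; these are conjugate precisely because $\frac{1}{r_0}+\frac{1}{r_1} = p_\theta\bigl(\frac{1-\theta}{p_0}+\frac{\theta}{p_1}\bigr) = 1$, which is the defining relation for $p_\theta$. This gives
\[
\nm{f}_{L^{p_\theta}(0,T;L^{q_\theta}(\Omega))} \leq \nm{f}_{L^{p_0}(0,T;L^{q_0}(\Omega))}^{1-\theta}\,\nm{f}_{L^{p_1}(0,T;L^{q_1}(\Omega))}^{\theta},
\]
and the right-hand side is finite by hypothesis, which proves the claim.

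The only points that need a little care are the degenerate cases where some of the exponents equal $\infty$: if $q_0=q_1$ or $p_0=p_1$ the corresponding interpolation step is trivial, and if, say, $q_i=\infty$ one simply replaces the relevant spatial $L^{q_i}$ integral by an essential supremum (and likewise in time when $p_i=\infty$), Hölder's inequality still holding in its standard limiting form. I do not expect any genuine obstacle here; the statement is a routine consequence of Hölder's inequality and is recorded only for later use in the passage to the limit.
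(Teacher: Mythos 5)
Your proof is correct and follows exactly the same strategy as the paper: Hölder's inequality applied first in the spatial variable, then in time. If anything, you have written the intermediate step more carefully, since the paper's display omits the power $p_\theta$ on the product $\nm{f}_{L^{q_0}}^{1-\theta}\nm{f}_{L^{q_1}}^{\theta}$ in its middle expression, a harmless typo that your version avoids.
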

\begin{proof}
The proof consists in applying H\"older's inequality to $f^{1-\theta}f^\theta = f$ twice, first with respect to $x\in\Omega$, then with respect to $t\in[0,T]$. Namely we have:
\begin{align*}
\left(
\int_0^T \nm{f}_{L^{q_\theta}(\Omega)}^{p_\theta} \d t
\right)^{\frac{1}{p_\theta}}
&\leq
\left(
\int_0^T \nm{f}_{L^{q_0}(\Omega)}^{1-\theta}
\nm{f}_{L^{q_1}(\Omega)}^{\theta}\d t
\right)^{\frac{1}{p_\theta}}
\\
&\leq
\left(
\int_0^T \nm{f}_{L^{q_0}(\Omega)}^{p_0} \d t
\right)^{\frac{1-\theta}{p_0}}
\left(
\int_0^T \nm{f}_{L^{q_1}(\Omega)}^{p_1} \d t
\right)^{\frac{\theta}{p_1}}.
\end{align*}
\end{proof}

Similarly to the $L^2$ case, we consider the weak formulation  \eqref{eq:weakform} and we divide $\RR^d$ into three parts, namely $\Omega$, $\Omega_k^c$ and $\Omega_k\setminus \Omega$, and hence we split the weak formulation as follows:
\begin{align} 
\nonumber
\int_{\RR^d}  u_k\eta\Big|_0^T \d x
&+ 
\int_0^T\int_{\RR^d}   \left[
\left(  
\nabla \phi(u_k) 
+  u_k \nabla V_k 
+  u_k \nabla W*u_k 
\right)\cdot \nabla \eta  
- u_k \de_t \eta 
\right]
\d x \d t
\\ \label{eq:weaksplit}
&
= I_\Omega + I_{\Omega_k \setminus \Omega} + I_{\Omega_k^c} + J_\Omega + J_{\Omega_k \setminus \Omega} + J_{\Omega_k^c},
\end{align}
for any test function $\eta\in C_0^\infty(Q_T)$. Namely we have defined
\begin{align*}
I_{A} &= 
\int_0^T\int_{A}   
\left(  
\nabla \phi(u_k) 
+  u_k \nabla V_k 
+  u_k \nabla W*u_k 
\right)\cdot \nabla \eta  \,
\d x \d t,\\
J_A & =  - 
\int_0^T\int_{A}    
u_k \de_t\eta \,
\d x \d t.
\end{align*}

\begin{proof}[Proof of Theorem \ref{thm:main2}, non-degenerate case]
We are going to show that all terms except $I_\Omega$ and $J_\Omega$ vanish in the limit $k\to \infty$. It follows that the term $I_\Omega + J_\Omega$ characterizes the limit problem defined in $\Omega$. 

\begin{itemize}
	\item {\boldmath$I_\Omega$.}
First, we restrict our attention to $I_\Omega$. We notice that restricting  \eqref{eq:entropybound} in $\Omega$ we have
\begin{equation} \label{bound0}
\int_0^T\int_{\Omega} 
u_k \left| 
\nabla (\log u_k + \xi(u_k) + V_0 + W*u_k)
\right|^2
\d x \d t
\leq 
C_0.
\end{equation}
Hence, we have that 
$\sqrt{u_k} \,
\nabla ( \log u_k
+ \xi(u_k)
+  V_0 
+  W*u_k )
 \in L^2(\Omega_T)$. We now proceed to show that the last two terms in this expression are bounded in $L^2(\Omega_T)$. We have
\begin{equation} \label{bound1}
\int_0^T\int_{\Omega} 
u_k 
|
\nabla V_0
|^2
\d x \d t
\leq 
T \nm{\nabla V_0}_{L^\infty(\Omega)}^2
\int_\Omega u_0 \d x,
\end{equation}
and
\begin{equation} \label{bound2}
\int_0^T\int_{\Omega} 
u_k 
|
\nabla W*u_k
|^2
\d x \d t
\leq T \nm{\nabla W}_{L^\infty(\Omega)}
\left( \int_\Omega u_0 \d x \right)^2.
\end{equation}
We rewrite the diffusion terms as
\begin{equation} \label{diffterms}
\int_0^T\int_{\Omega} 
u_k \left| 
\nabla (\log u_k + \xi(u_k))
\right|^2
\d x \d t
=
\int_0^T\int_{\Omega} 
(1+\sigma'(u_k))
| \nabla (\sqrt{ u_k })
|^2
\d x \d t.
\end{equation}
Recalling that $\sigma' \geq 0$ and combining \eqref{bound0}, \eqref{bound1} and \eqref{bound2}, from \eqref{diffterms} it follows that that $\nabla( \sqrt{u_k} )\in L^2(\Omega_T)$ uniformly in $k$. 
We know that the mass of $u_k$ is constant and $u_k$ is non-negative, therefore $\sqrt{u_k}\in L^\infty(0,T;L^2(\Omega))$. Additionally, we know that $\sqrt{u_k}\in L^2(0,T;H^1(\Omega))$, which implies $\sqrt{u_k}\in L^2(0,T;L^{2^*}(\Omega))$. By Lemma \ref{lem:interp}, we deduce that $\sqrt{u_k}\in L^r(\Omega_T)$ for any $r\in ( 2 , 2(1+2/d) )$, when $d\geq 2$ (if $d=1$ we choose $r=4$).
We can now estimate $\de_tu_k$ as follows:
\begin{align*}
\langle \de_tu_k, \eta \rangle 
&=
\int_{\Omega_T}
\sqrt{u_k}\sqrt{u_k}\left[  
\nabla (\log u_k + \xi(u_k) + V_0 + W*u_k)
\right]\cdot \nabla \eta  
\d x \d t
\\
&\leq
C_0^\frac{1}{2} \nm{\sqrt{u_k}}_{L^r(\Omega_T)} \nm{\nabla\eta}_{L^s(\Omega_T)},
\end{align*}
where $r$ and $s$ satisfy $\frac{1}{r}+\frac{1}{s}=\frac{1}{2}$.
This implies that $\de_tu_k$ is bounded in $L^{s}(0,T;W^{1,s}(\Omega))'$ uniformly with respect to $k$.
In order to obtain compactness for the sequence $\sqrt{u_k}$, we need a modified version of Aubin--Lions Lemma. In particular, if $a>1$ we apply Theorem 1 of \cite{moussa2016some} (with $\Phi(\cdot) = \sqrt{|\cdot|}$, so that $\sqrt{|\cdot|}\in W^{1,1}(\RR)$ and $\mathrm{meas}(\{  \sqrt{|\cdot|}>\delta  \}) \to 0$ in $L^1$ as $\delta\to 0$), otherwise we apply Theorem 3 of \cite{chen2014note} (with $m=\frac{1}{2}$).
We deduce that the sequence $\sqrt{u_k}$ is compact in $L^2(\Omega_T)$.
Therefore we can extract a subsequence (still denoted by $u_k$) that converges strongly in the same space, that is,
\begin{equation}\label{l1strong}
\sqrt{u_k} \to \sqrt{u} \quad \text{ in } L^2(\Omega_T).
\end{equation}

We now rewrite $I_\Omega$ as follows
\begin{align} \label{IntOmega}
I_\Omega & = \int_0^T\int_{\Omega}  
\sqrt{u_k} \sqrt{u_k}\, 
\nabla ( \log u_k
+ \xi(u_k)
+  V_k 
+  W*u_k )
\cdot \nabla \eta \,  
\d x \d t,
\end{align}
and we notice that the integrand is the product of a strongly converging sequence and a weakly converging sequence in $L^2(\Omega_T)$, in particular
$\sqrt{u_k}\to \sqrt{u}$ and 
$ F_k \rightharpoonup F\in L^2(\Omega_T)$, where $F_k = \sqrt{u_k}\nabla ( \log u_k
+ \xi(u_k)
+  V_k 
+  W*u_k )$. 
This means that
$$
\int_0^T\int_{\Omega}  
\sqrt{u_k} F_k
\cdot \nabla \eta \,  
\d x \d t 
\to 
\int_0^T\int_{\Omega}  
\sqrt{u} F
\cdot \nabla \eta \,  
\d x \d t .
$$
Furthermore, again combining \eqref{bound0}, \eqref{bound1} and \eqref{bound2}, it follows that $F = \nabla \phi(u) 
+  u \nabla V_0 
+  u \nabla W*u $ and
\begin{equation}\label{eq:convergence1}
I_{\Omega} \to 
\int_0^T\int_{\Omega}   \left[
\left(  
\nabla \phi(u) 
+  u \nabla V_0 
+  u \nabla W*u 
\right)\cdot \nabla \eta  
\right]
\d x \d t,
\quad \text{ as } k\to\infty.
\end{equation}

\item {\boldmath$I_{\Omega_k^c}$.}
The term $I_{\Omega_k^c}$ is dealt with in an analogous way to $I_\Omega$ since equation \eqref{IntOmega} holds replacing $\Omega$ by $\Omega_k^c$. In particular, this term then vanishes since $u_k\to 0$ strongly in $L^1(\Omega_k^c)$ (using Lemma \ref{lem:uk_to_0}).

\item {\boldmath$I_{\Omega_k \setminus \Omega}$.}
We now check that $I_{\Omega_k \setminus \Omega}$ also vanishes. Indeed we have
\begin{align*}
I_{\Omega_k \setminus \Omega} \leq \|u_k\|_{L^\infty (0, T; L^1(\Omega_k \setminus \Omega))}^{1/2} 
\bigg(
\int_0^T \! \int_{\Omega_k \setminus\Omega}
u_k
\left|  
\nabla (\log u_k + \xi(u_k)
+   V_k 
+   W*u_k )
\right|^2
\d x \d t
\bigg)^{\frac{1}{2}}.
\end{align*}
The second factor in the right hand side is bounded by the Lemma \ref{lem:bound-step3}. The first factor is bounded and converges to zero as $k\to \infty$ using the following argument. By Jensen's inequality we obtain, for a region $R$ such that $|R|<1$,
\begin{equation}\label{equintegrability}
\int_R g(x) \d x \leq
 \left( - \log |R|) 
 \right)^{-1}
   \left(
   \int_R g(x) \log g(x) \d x + e^{-1} 
   \right).
\end{equation}
We use inequality \eqref{equintegrability} with $R=\Omega_k \setminus \Omega$ and $g=u_k$. Recalling that $|\Omega_k \setminus \Omega | \to 0$ as $k\to \infty$, we obtain the desired result, $I_{\Omega_k \setminus \Omega} \to 0$. 

\item {\boldmath$J_{\Omega}$.}
Up to a subsequence, $u_k$ converges weakly in $L^1(\Omega)$ to a limit function $u$, hence
\begin{equation}\label{eq:convergence2b}
J_\Omega \to  - 
\int_0^T\int_{A}    
u \de_t\eta \,
\d x \d t,
\quad \text{ as } k\to\infty.
\end{equation}

\item {\boldmath$J_{\Omega_k^c}$.}
Notice that $u_0 = 0$ in $\Omega_k^c$. The remaining terms in $J_{\Omega_k^c}$ vanish due to Lemma \ref{lem:uk_to_0}.

\item {\boldmath$J_{\Omega_k \setminus \Omega}$.}
The integral $J_{\Omega_k \setminus \Omega}$ goes to zero 
because the integrand is uniformly bounded in $L^1$ (thanks to the conservation of mass, see Lemma \ref{lem:mass-pos}) and $|\Omega_k \setminus\Omega| \to 0$. 
\end{itemize}

Thanks to \eqref{eq:convergence1} and \eqref{eq:convergence2b}, the weak formulation we obtain in the limit is the following:
$$
\int_0^T\int_{\Omega}   \left[
\left(  
\nabla \phi(u) 
+  u \nabla V_0 
+  u \nabla W*u 
\right)\cdot \nabla \eta  
- u \de_t \eta 
\right]
\d x \d t = 0.
$$
Notice that since the test function can take arbitrary values on $\de\Omega$, the no-flux conditions on $\de\Omega$ are implicitly enforced.

We now show that initial datum is satisfied in $\mathcal{P}_2(\RR^d)$.
To do so, we use the characterization of $\mathcal{P}_2(\RR^d)$ convergence given in Proposition 7.1.5, in \cite{ambrosio2008gradient}.
In particular, the the second moment $\int_{\RR^d} u_k |x|^2 \d x$ is bounded uniformly ($V_k(x) \geq c|x|^2$ at infinity) 
and the so-called narrow convergence is implied by the $L^1$ bounds obtained in \eqref{eq:decay}, \eqref{l1strong} and \eqref{equintegrability}. 
We deduce that  $u_k$ converges to $u$  in $\mathcal{P}_2(\RR^d)$ as well.
As a consequence, we obtain that the initial datum is satisfied  in the $\mathcal{P}_2(\RR^d)$ sense, indeed, since our $L^1$ bounds are uniform in time, we have that
$$
\lim_{t\to 0} \lim_{k\to\infty} d_{2}(u(t,\cdot),u_0)
\leq \lim_{t\to 0} \lim_{k\to\infty} 
(
d_{2}(u_k(t,\cdot),u(t,\cdot))+d_{2}(u_k(t,\cdot),u_0)
)
=0,
$$
where $d_2$ denotes the 2-Wasserstein distance. Hence the result is proven.
\end{proof}

\begin{proof}[Proof of Theorem \ref{thm:main2}, degenerate case]
Let us now drop the linear diffusion term and consider $\phi = \sigma$.
Almost all the results above remain unchanged, therefore we will only highlight the steps that differ.
Notice that, as before, $u_k$ converges to $0$ in the complement of $\Omega$.
Let us discuss the passage to the limit in the term $I_\Omega$ in the proof of Theorem \ref{thmAGS}.
More specifically, we have to obtain 
strong $L^1$ convergence in $\Omega$ finding an alternative to \eqref{diffterms}.
Similarly to \eqref{bound0}, we have
$$
\int_0^T\int_{\Omega} 
u_k \left| 
\nabla (\xi(u_k) + V_0 + W*u_k)
\right|^2
\d x \d t
\leq 
C_0.
$$
Recall that, by assumption, we have $\sigma'(s)\geq \mu s^a$ for some $a>0$. 
Since $u_k$ has constant mass (and hence it is uniformly bounded in $L^\infty(0,T;L^1(\Omega))$), we deduce that
$$
\int_0^T\int_{\Omega} 
\mu u^{2a-1}\left| 
\nabla u_k
\right|^2
\d x \d t
\leq
\int_0^T\int_{\Omega} 
\frac{\sigma'(u)^2}{u}\left| 
\nabla u_k
\right|^2
\d x \d t
=
\int_0^T\int_{\Omega} 
u_k \left| 
\nabla \xi(u_k)
\right|^2
\d x \d t
\leq 
C_0.
$$
This implies that $\nabla \left(u_k^{a + \frac{1}{2}}\right) \in L^2(\Omega_T)$. 
We observe that,
since $\Xi(u_k)$ and $\nabla\xi(u_k)$ are bounded in $L^2(\Omega)$ independently of $k$, and since $u_k$ is non-negative,
inequality \eqref{eq:entropybound} implies, for a constant $C$ depending only on $a$ and $\mu$,
\begin{equation}\label{eq:est-deg}
 \int_\Omega u_k(t)^{1+a} \d x \leq C(a, \mu)\int_\Omega \Xi(u_k(t)) \d x <\infty,
 \quad \forall t\geq 0.
\end{equation}
It follows that $\sqrt{u_k}\in L^\infty(0,T;L^{2+2a}(\Omega))$, which implies $\sqrt{u_k}\in L^r(\Omega_T)$ for any $r\in (2, 2+2a )$.
We can estimate $\de_tu_k$ as follows:
\begin{align*}
\langle \de_tu_k, \eta \rangle 
&=
\int_{\omega_T}
\sqrt{u_k}\sqrt{u_k}\left(  
\nabla (\xi(u_k) + V_0 + W*u_k)
\right)\cdot \nabla \eta  
\d x \d t
\\
&\leq
C_0^\frac{1}{2} \nm{\sqrt{u_k}}_{L^r(\Omega_T)} \nm{\nabla\eta}_{L^s(\Omega_T)},
\end{align*}
where $r$ and $s$ satisfy $\frac{1}{r}+\frac{1}{s}=\frac{1}{2}$.
This implies that $\de_tu_k$ is bounded in $L^{s}(0,T;W^{1,s}(\Omega))'$ uniformly with respect to $k$.
We can now apply either the modified version of Aubin--Lions Lemma presented in Theorem 1 of \cite{moussa2016some} (with $\Phi(\cdot) = |\cdot|^{a + \frac{1}{2}}$) if $a\geq 1$, or Theorem 3 in \cite{chen2014note} (with $m=a + \frac{1}{2}$) otherwise.
Hence, by compactness, the sequence $u_k$ converges a.e. to a limit $u$. Combining a.e. convergence with inequality \eqref{eq:est-deg} and uniqueness of weak limits, we obtain strong $L^1$ convergence of $u_k$ to $u$ for $k\to\infty$. This allows to conclude the proof.
\end{proof}

\begin{rem}[Moments]
We have used the hypothesis of quadratic growth of $V$ at infinity only to ensure that the second moment $\int_{\RR^d} u_k |x|^2 \d x$ is bounded and therefore that the initial datum is satisfied in  $\mathcal{P}_2(\RR^d)$. It is possible to make less restrictive assumptions, for example, if $V$ grows linearly at infinity we get control over the first moment and the initial datum is satisfied in $\mathcal{P}_1(\RR^d)$.
\end{rem}

\section{Numerical exploration} \label{sec:numerics}

We will now illustrate our main results on the approximation of no-flux boundary value problems by large confinement with some numerical results. Here, we make use of a numerical scheme with excellent properties such as semidiscrete free energy decay, and positivity under a CFL condition. The numerical scheme is based on a finite volume discretization with upwinding and second order reconstruction. We refer to \cite{BF12,CCH15} and the references therein for further details. This numerical strategy has been successfully used in many similar gradient flow type equations and systems \cite{CHS18}, and it has recently been generalized to high order DG-approximations in \cite{sCS18}. All our numerical results are obtained with the original second-order version in \cite{CCH15}. In the next subsection we will showcase our results in one dimension and then we will explore the behavior of the solutions when the initial is not necessarily supported on the limiting domain. Finally, the last subsection explores these issues in two dimensions.

\subsection{One-dimensional examples}
We first run simulations of the one-dimensional problem, with $\Omega = [-1, 1]$ and a computational domain $B = [-4,4]$ (we choose $B$ large enough so that its size does not affect the solution in $\Omega$). 

Figure \ref{fig:linear} shows an example with the linear Fokker--Planck equation ($\phi = u$ and $W = 0$). 
Figure \ref{fig:local} shows an example with a nonlinear local Fokker--Planck equation (with $\phi = u + \beta u^2$ and $W = 0$). 
Figure \ref{fig:nonlocal} shows an example with a nonlinear nonlocal Fokker--Planck equation (with $\phi = u + \beta u^2$ and $W  = -(1-|x|)_+$). In all three figures, the subplot (a) shows the confining potentials $V_k$ in thin colored lines, and the potential $V_0$ in $\Omega$ in a thick black line. The subplot (b) shows the solutions $u_k$ at the final simulation time in thin colored lines and the solution of the limit problem $u$ (only defined in $\Omega$) as a thick black line. The subplot (c) shows the $l^2$-norm between $u_k$ and $u$ in $\Omega$ as a function of time, for various values of $k$. The subplot (d) shows again the $l^2$-norm between $u_k$ and $u$ in $\Omega$ but only at the final time (circles), as well as the norm of $u_k$ in $B\setminus \Omega$ at the final time. As expected, we observe that $u_k$ get closer to $u$ in $\Omega$ and that the norms of the errors decrease as $k$ increases. 
\def \scc {0.7}
\def \scl {0.8}
\begin{figure}
\unitlength=1cm
\begin{center}
\vspace{3mm}
\psfrag{a}[l][][\scl]{(a)} \psfrag{b}[l][][\scl]{(b)}
\psfrag{c}[l][][\scl]{(c)} \psfrag{d}[l][][\scl]{(d)}
\psfrag{x}[][][\scl]{$x$} \psfrag{k}[][][\scl]{$k$} 
\psfrag{Vk}[][][\scl]{$V_k$} \psfrag{uk}[b][][\scl]{$u_k$} 
\psfrag{normt}[b][][\scc]{$\|u-u_k\|$} \psfrag{norm}[b][][\scc]{$\|u-u_k\|, \|u_k\|$} \psfrag{t}[][][\scl]{$t$}
	\includegraphics[height = \size]{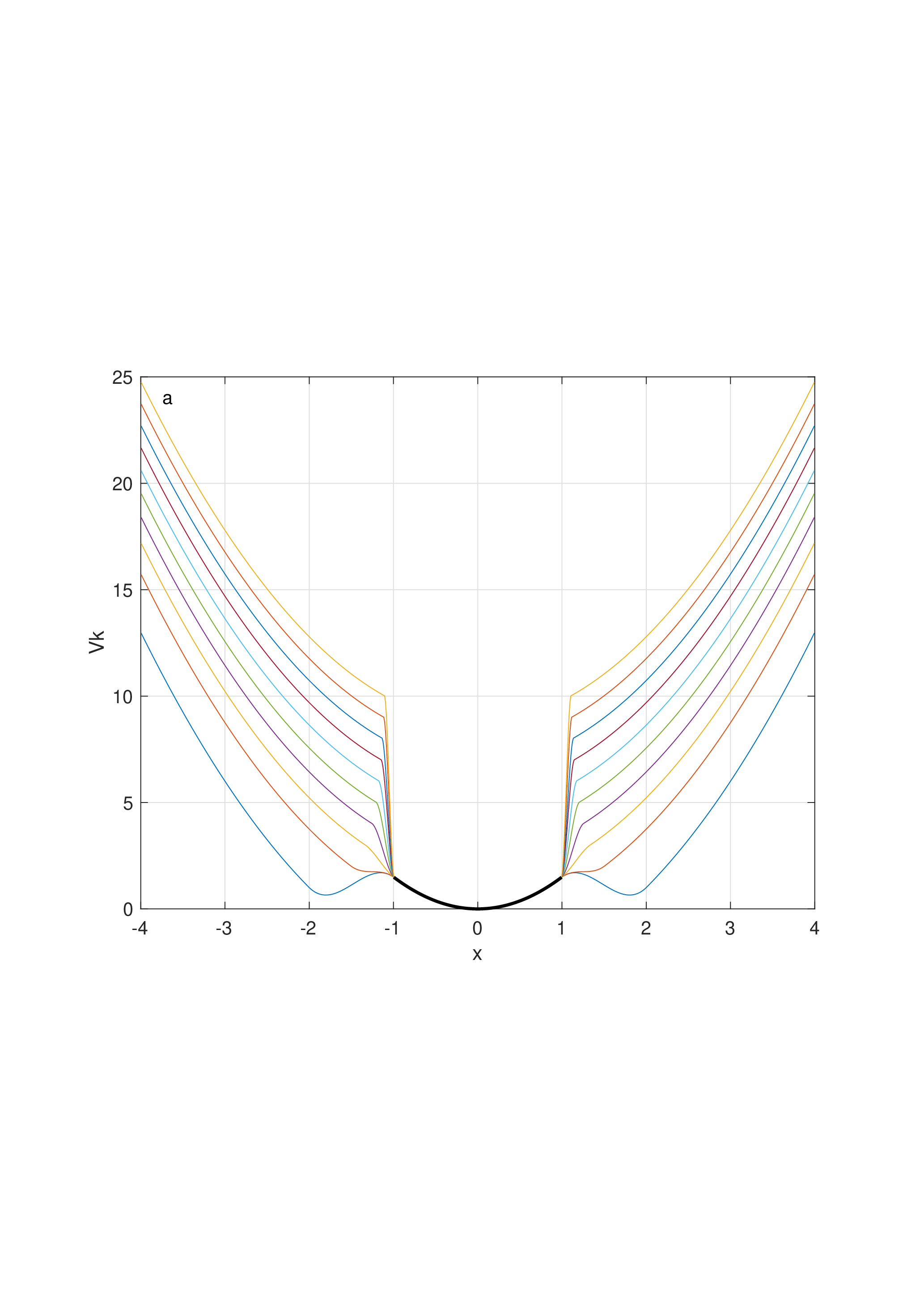} \qquad 
	\includegraphics[height = \size]{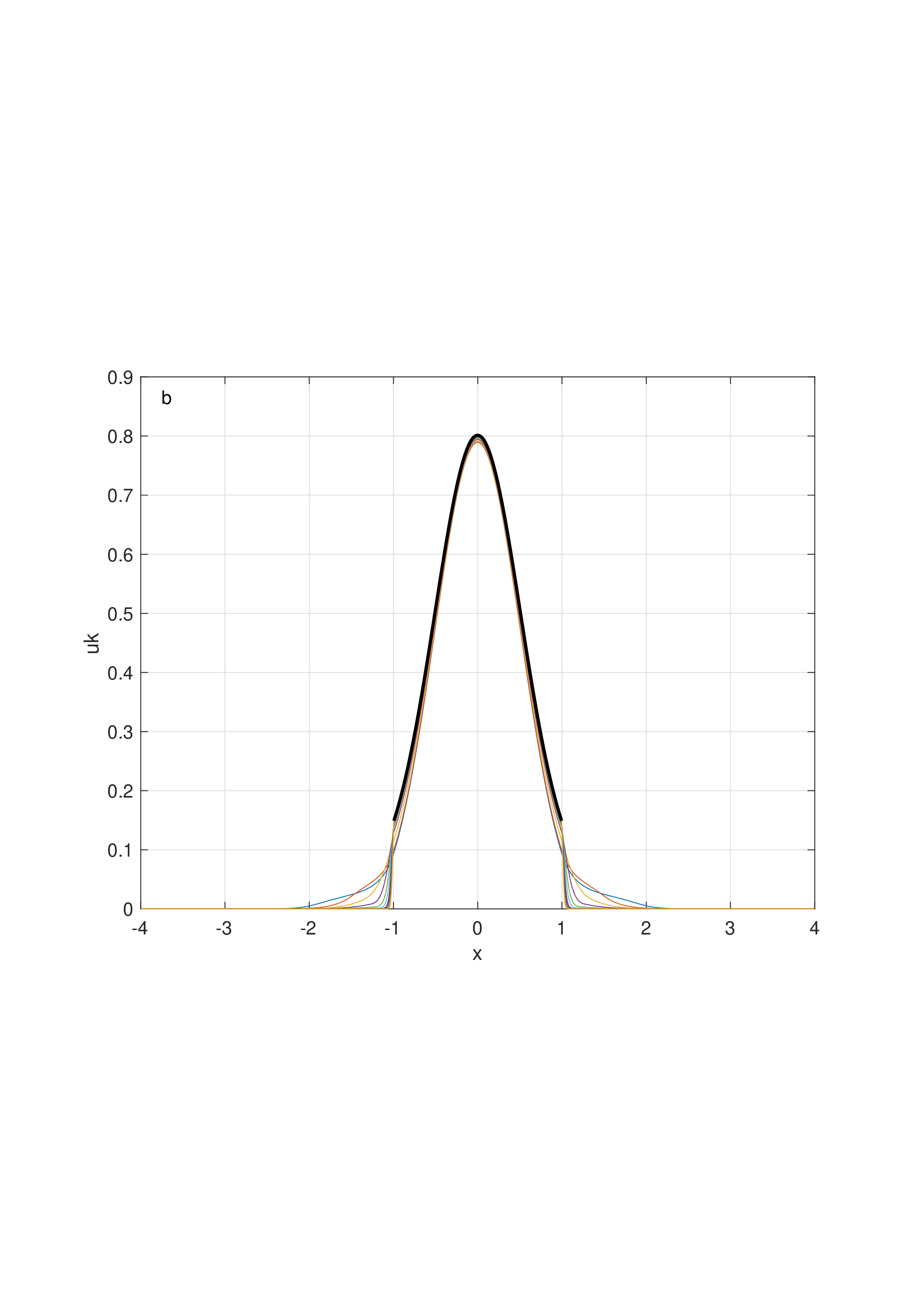}\\ \vspace{5mm}
	\includegraphics[height = \size]{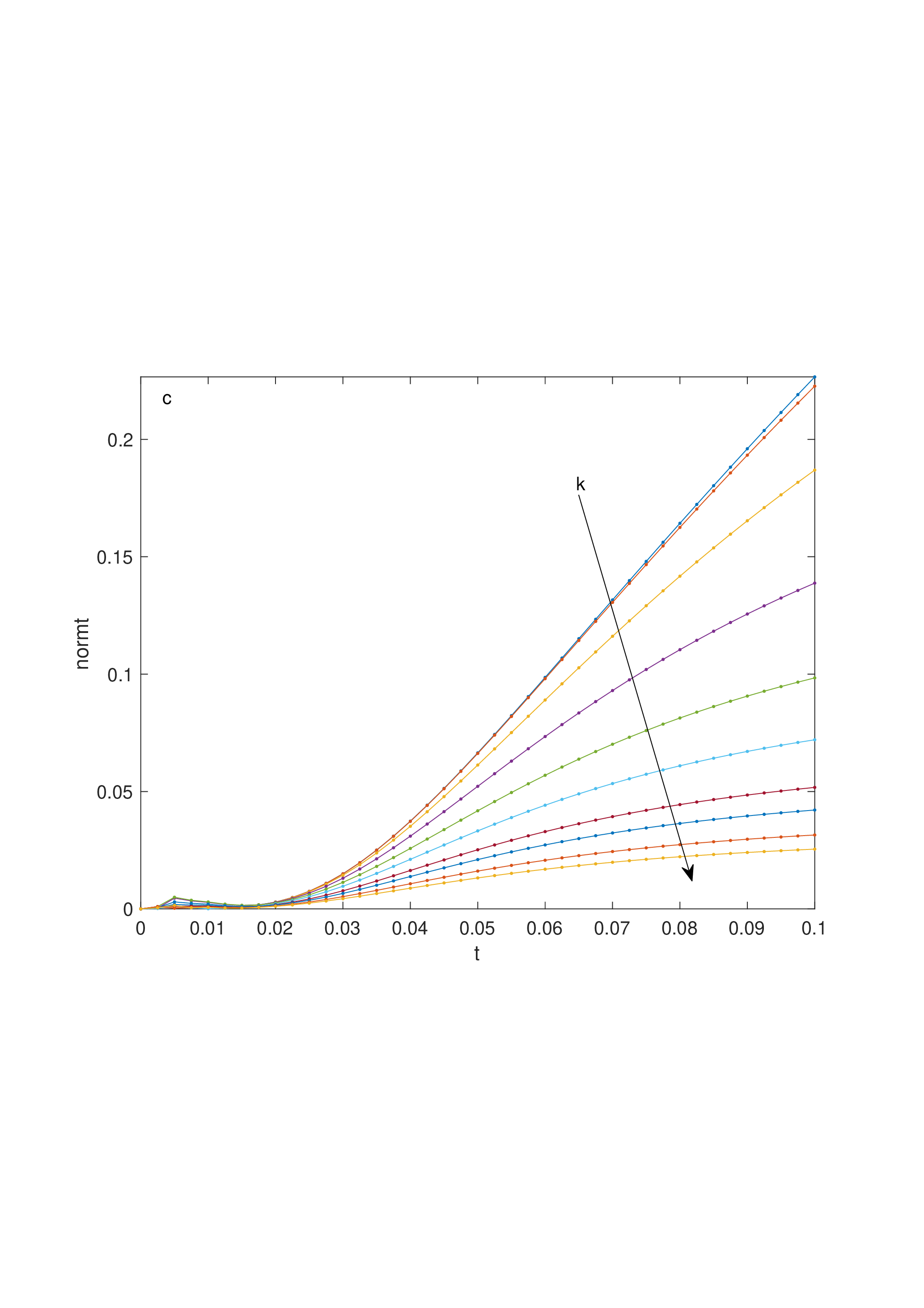} \qquad
	\includegraphics[height = \size]{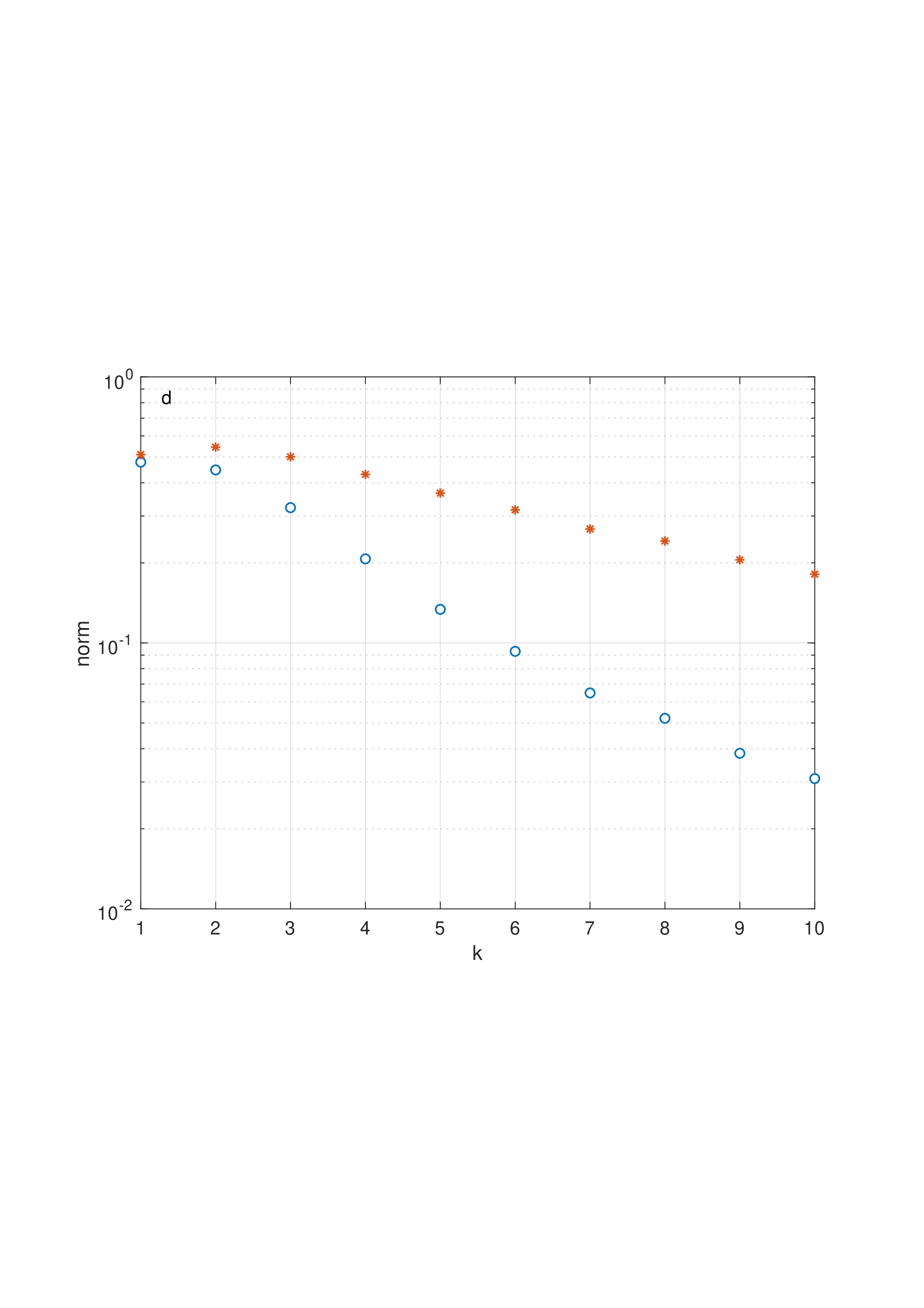} 
\caption{Example with linear diffusion, local. External potential $V(x) = 1.5 x^2$, linear diffusion $\phi (u) = u$, zero nonlocal term $W \equiv 0$. Simulation with $\Omega = [-1, 1]$, computational domain for $u_k$ is $B = [-4, 4]$ ($k \ge 1$), initial condition $u_0 = 1$, final time $T_f = 0.2$. Grid spacing $\Delta x = 0.01$ and initial time-step $\Delta t = 10^{-5}$. (a) Confining potential $V_k$ for $k = 1,2, \dots, 10$ (potential $V$ shown in thick black line). (b) Solutions $u$ (thick black line) and $u_k$ (colored thin lines) at $t = T_f$. (c) Norm between $u$ and $u_k$ in $\Omega$ at times $t \in [0, 0.1]$. (d) Norm between $u$ and $u_k$ in $\Omega$ at $t = T_f$ (circles) and norm of $u_k$ in $B \setminus \Omega$ at $t = T_f$ (asterisks).}
\label{fig:linear}
  \end{center}
\end{figure}

\def \scc {0.7}
\def \scl {0.8}
\begin{figure}
\unitlength=1cm
\begin{center}
\vspace{3mm}
\psfrag{a}[l][][\scl]{(a)} \psfrag{b}[l][][\scl]{(b)}
\psfrag{c}[l][][\scl]{(c)} \psfrag{d}[l][][\scl]{(d)}
\psfrag{x}[][][\scl]{$x$} \psfrag{k}[][][\scl]{$k$} 
\psfrag{Vk}[][][\scl]{$V_k$} \psfrag{uk}[b][][\scl]{$u_k$} 
\psfrag{normt}[b][][\scc]{$\|u-u_k\|$} \psfrag{norm}[b][][\scc]{$\|u-u_k\|, \|u_k\|$} \psfrag{t}[][][\scl]{$t$}
	\includegraphics[height = \size]{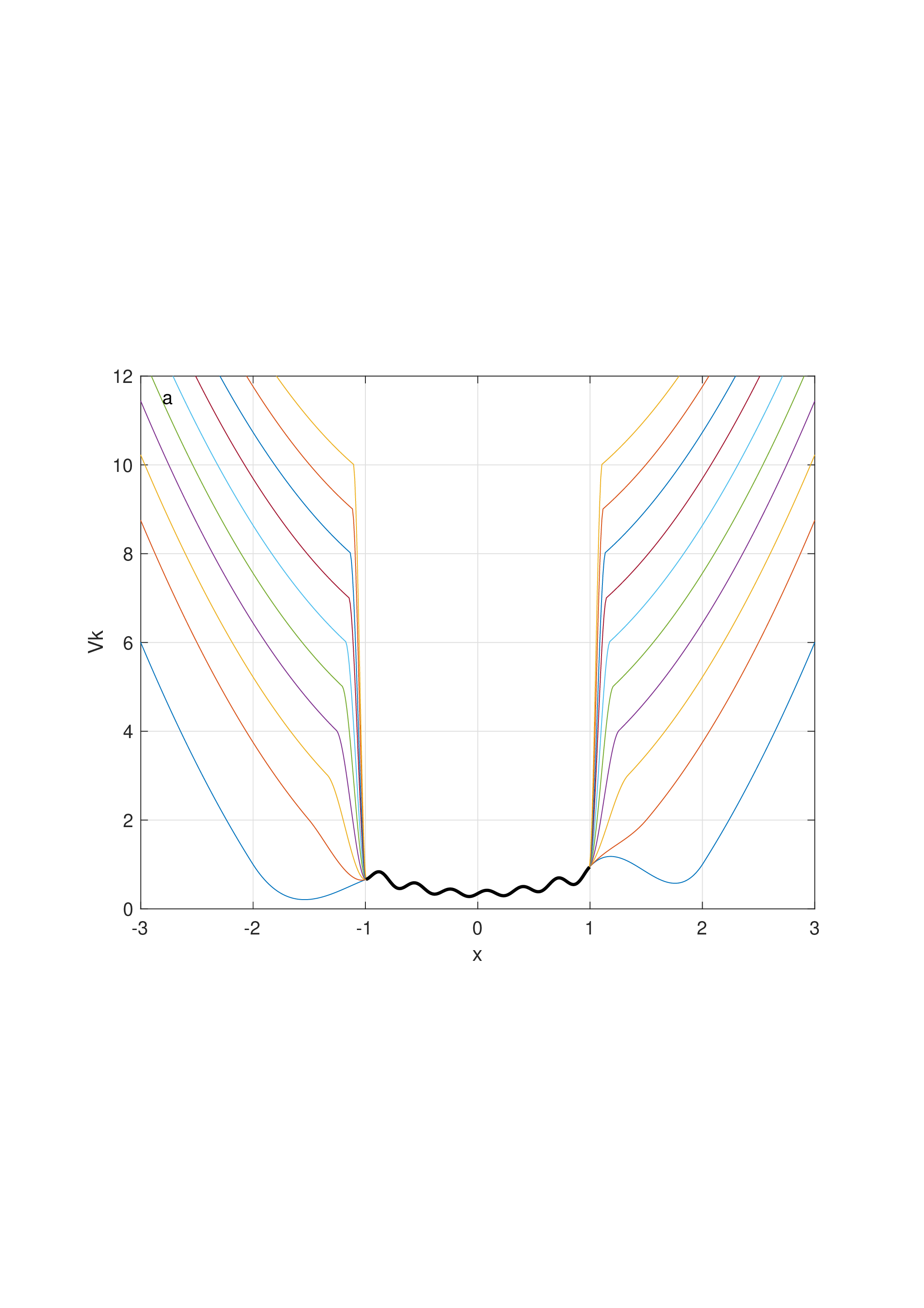} \qquad 
	\includegraphics[height = \size]{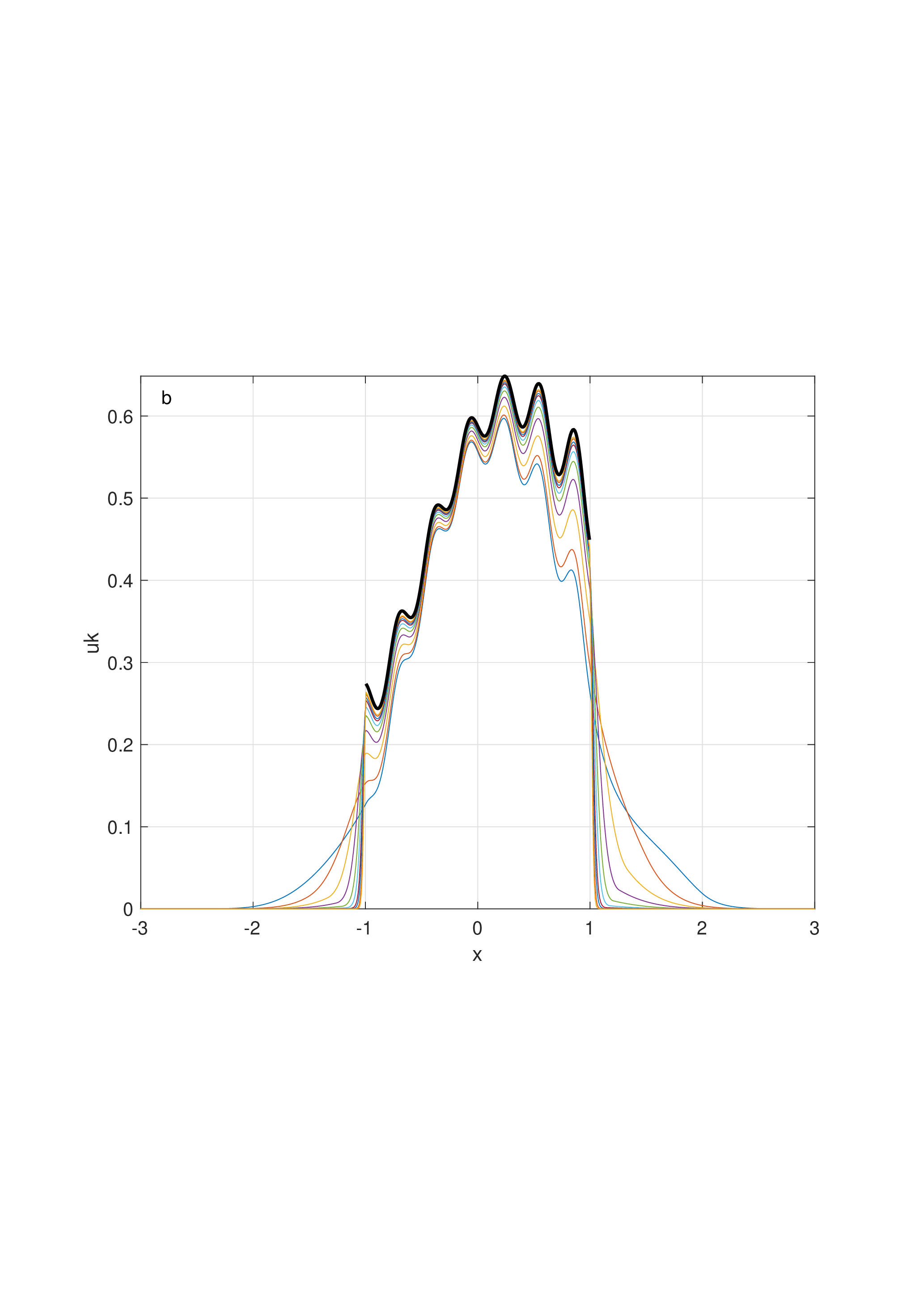}\\ \vspace{5mm}
	\includegraphics[height = \size]{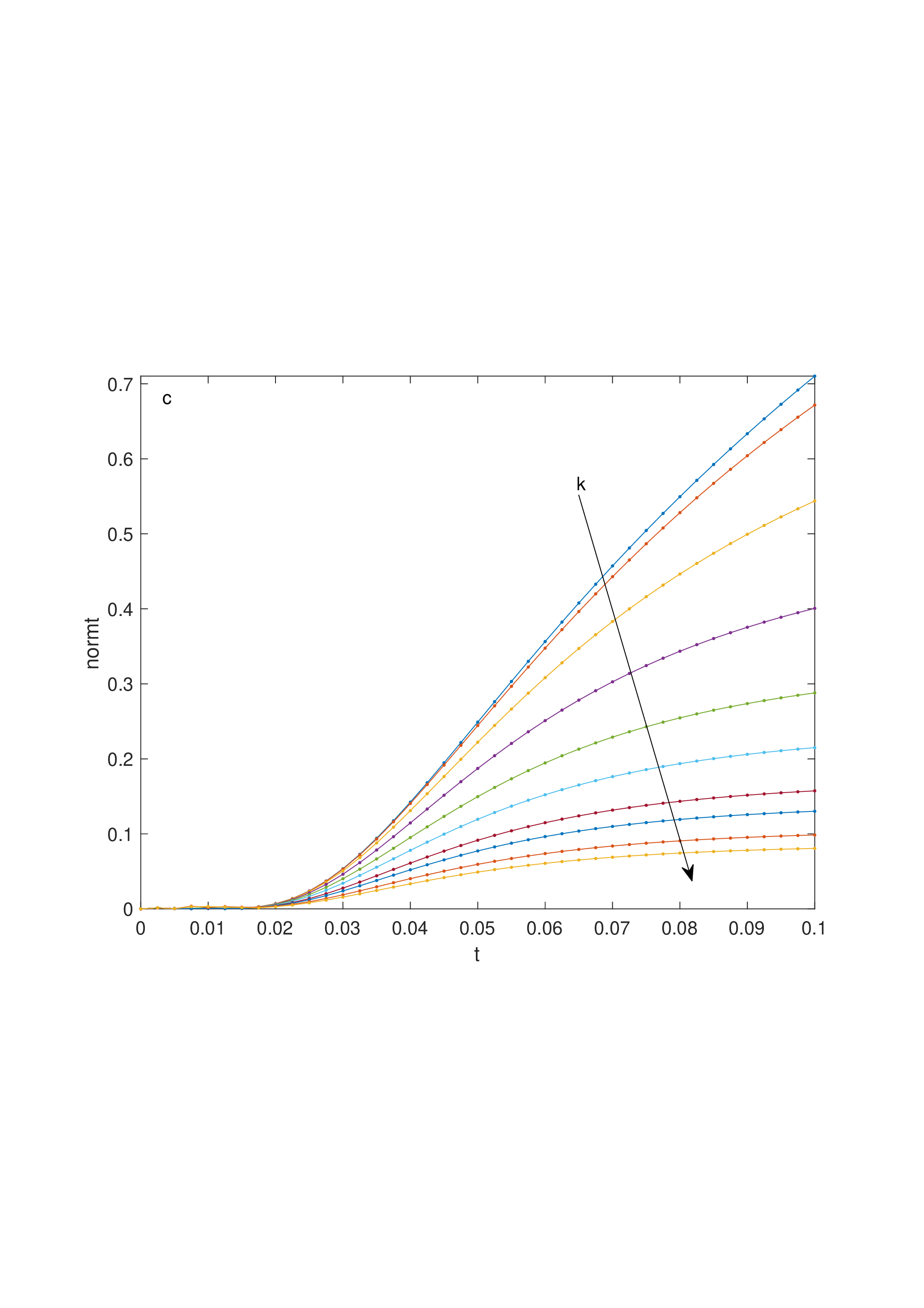} \qquad
	\includegraphics[height = \size]{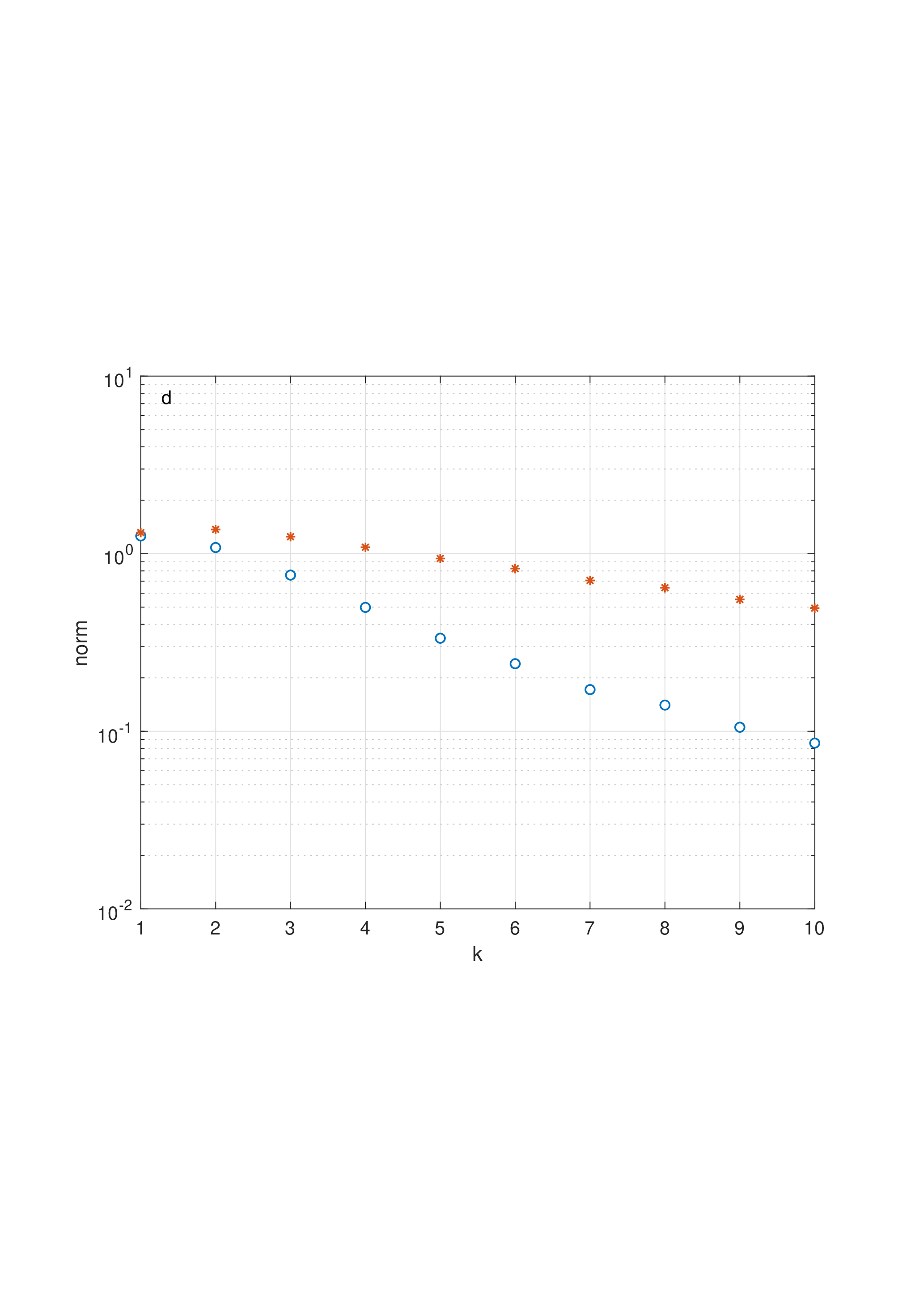} 
\caption{Example with nonlinear diffusion, local. External potential $V(x) = 0.46(1+0.2 \sin(20x))(x^2 + 0.75)$, nonlinear diffusion $\phi (u) = u + \beta u^2$, with $\beta = 0.49$, zero nonlocal term $W \equiv 0$. Simulation with $\Omega = [-1, 1]$, computational domain for $u_k$ is $B = [-4, 4]$ ($k \ge 1$), initial condition $u_0 = \chi_{[0.1, 0.3]}$, final time $T_f = 0.2$. Grid spacing $\Delta x = 0.005$ and initial time-step $\Delta t = 10^{-5}$. (a) Confining potential $V_k$ for $k = 1, 2,\dots, 10$ (potential $V$ shown in thick black line). (b) Solutions $u$ (thick black line) and $u_k$ (colored thin lines) at $t = T_f$. (c) Norm between $u$ and $u_k$ in $\Omega$ at times $t \in [0, 0.1]$. (d) Norm between $u$ and $u_k$ in $\Omega$ at $t = T_f$ (circles) and norm of $u_k$ in $B \setminus \Omega$ at $t = T_f$ (asterisks).}
\label{fig:local}
  \end{center}
\end{figure}

\def \scc {0.7}
\def \scl {0.8}
\begin{figure}
\unitlength=1cm
\begin{center}
\vspace{3mm}
\psfrag{a}[l][][\scl]{(a)} \psfrag{b}[l][][\scl]{(b)}
\psfrag{c}[l][][\scl]{(c)} \psfrag{d}[l][][\scl]{(d)}
\psfrag{x}[][][\scl]{$x$} \psfrag{k}[][][\scl]{$k$} 
\psfrag{Vk}[][][\scl]{$V_k$} \psfrag{uk}[b][][\scl]{$u_k$} 
\psfrag{normt}[b][][\scc]{$\|u-u_k\|$} \psfrag{norm}[b][][\scc]{$\|u-u_k\|, \|u_k\|$} \psfrag{t}[][][\scl]{$t$}
	\includegraphics[height = \size]{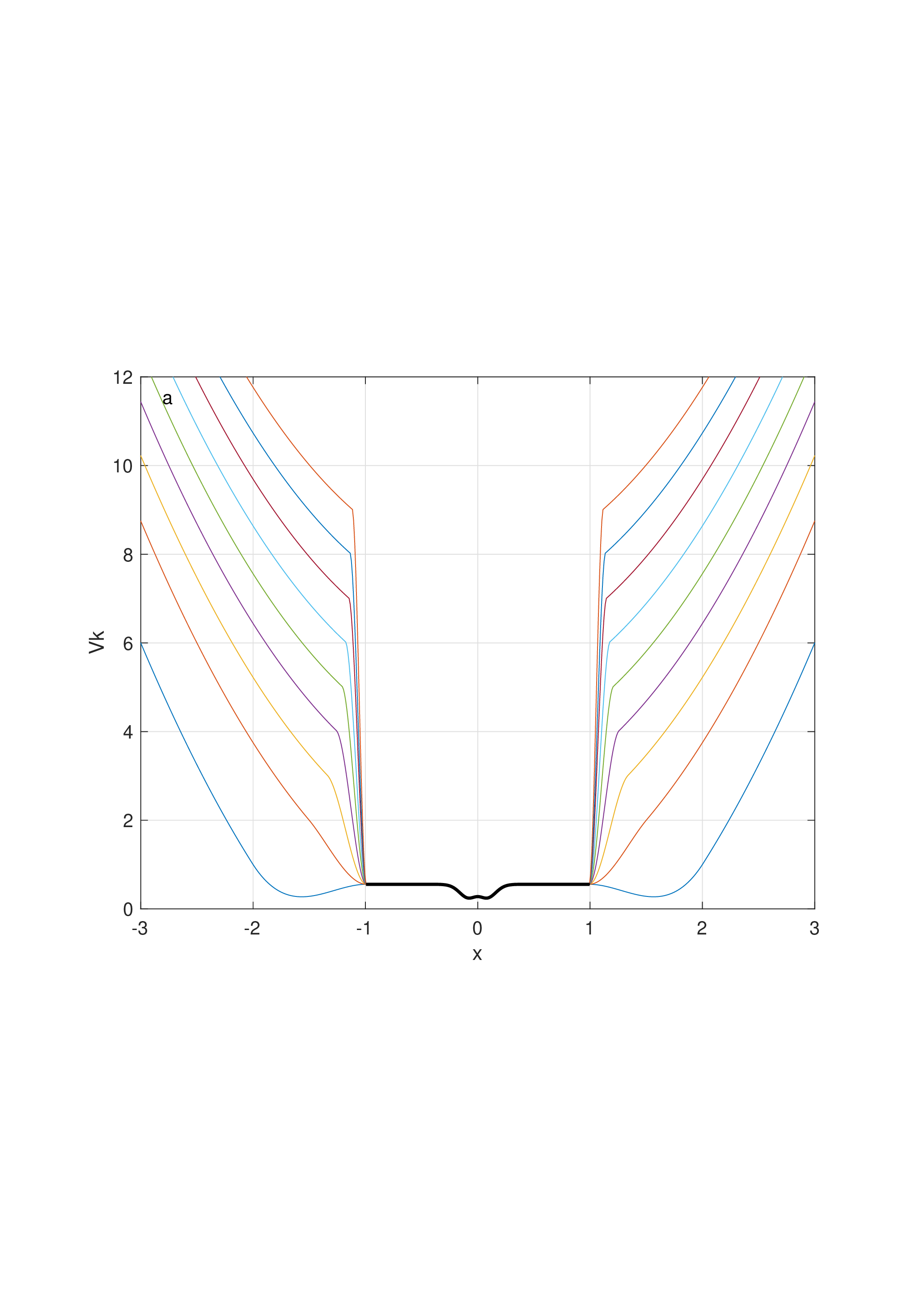} \qquad 
	\includegraphics[height = \size]{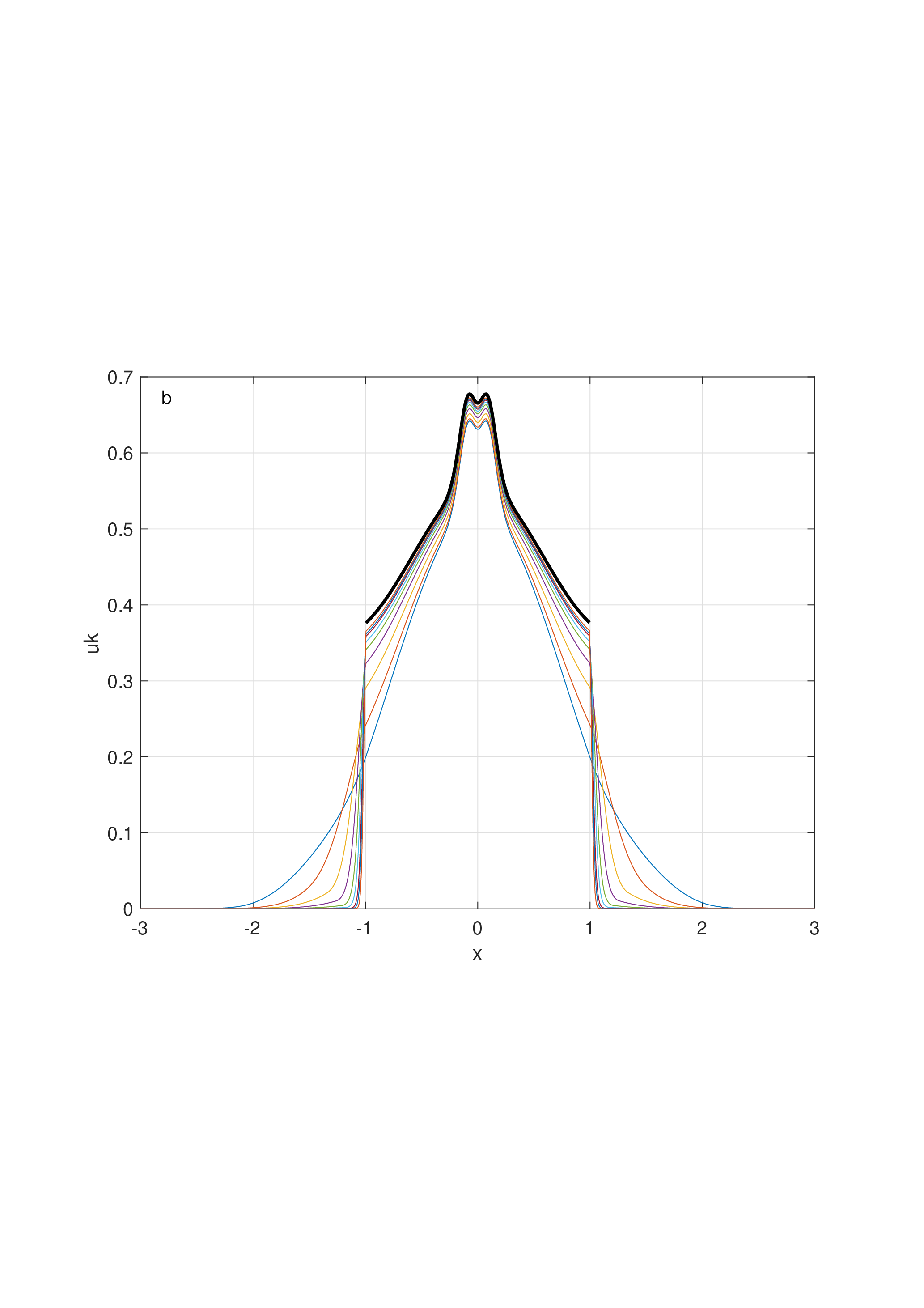}\\ \vspace{5mm}
	\includegraphics[height = \size]{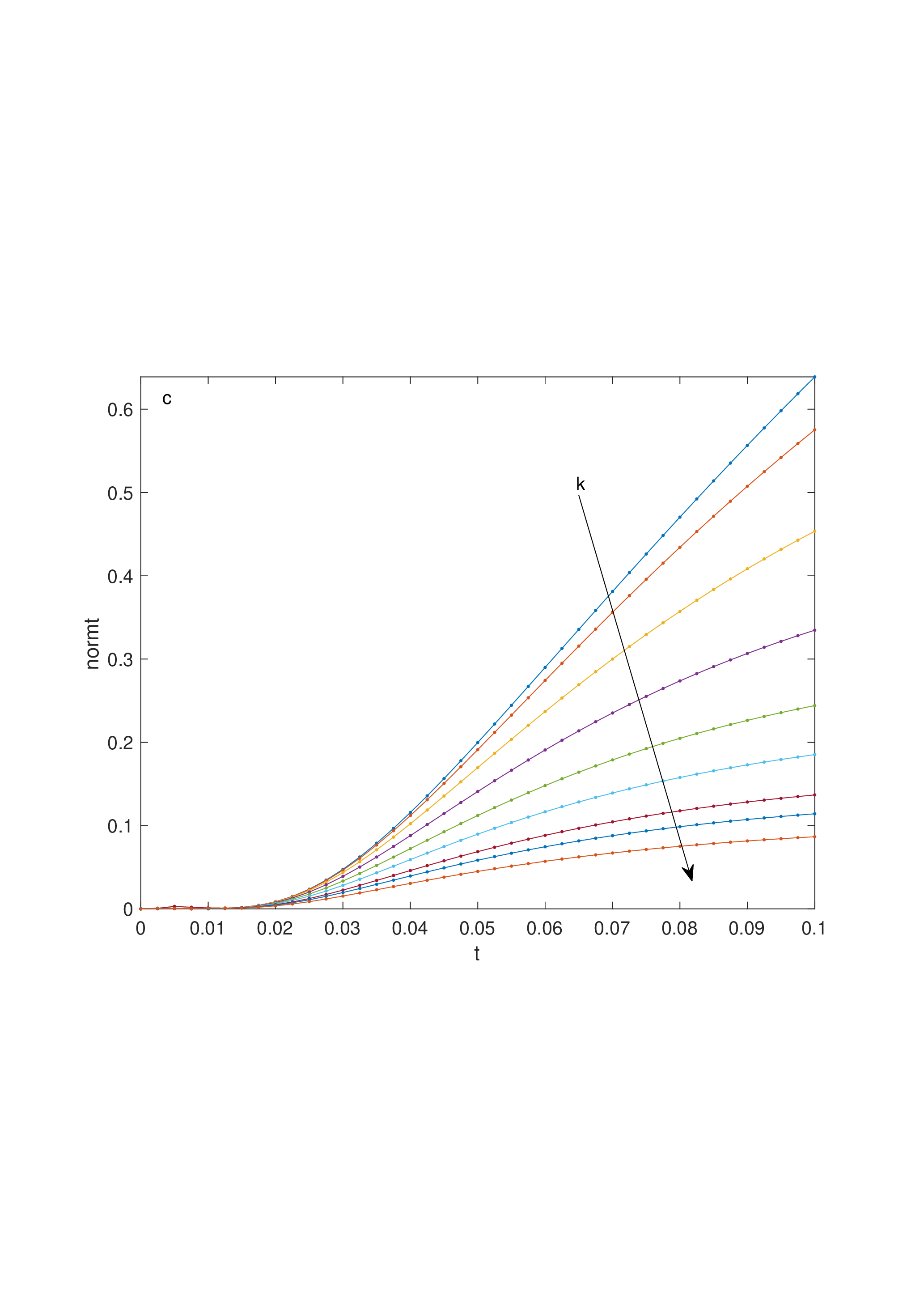} \qquad
	\includegraphics[height = \size]{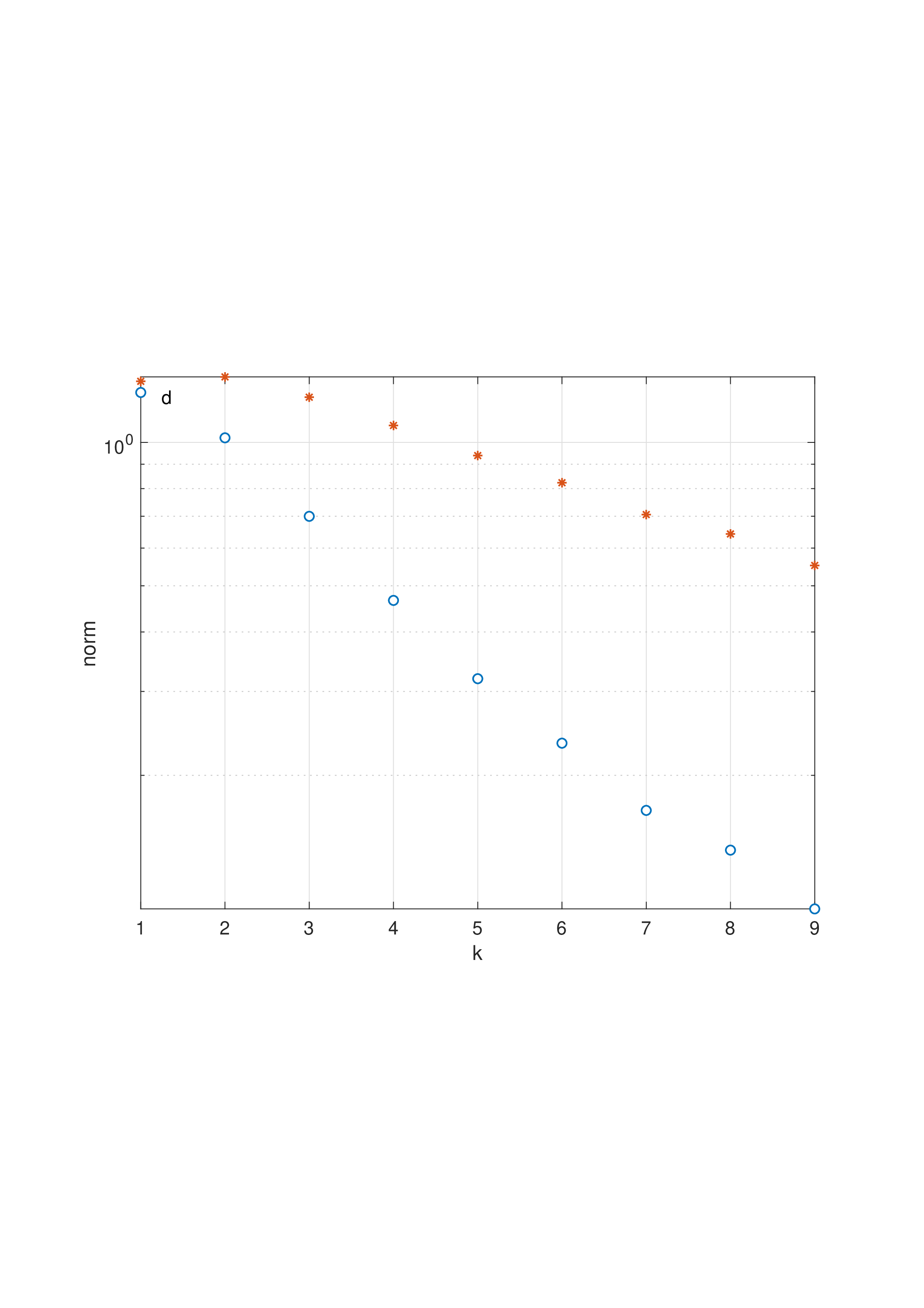} 
\caption{Example with nonlinear diffusion, nonlocal. External potential $V(x) = 0.56[ \exp(-100 x^2) - 1.5 \exp(-50 x^2) +1]$ , nonlinear diffusion $\phi (u) = u + \beta u^2$, with $\beta = 0.49$, nonlocal term $W  = -(1-|x|)_+$. Simulation with $\Omega = [-1, 1]$, computational domain for $u_k$ is $B = [-4, 4]$ ($k \ge 1$), initial condition $u_0 = 1$, final time $T_f = 0.2$. Grid spacing $\Delta x = 0.01$ and initial time-step $\Delta t = 10^{-5}$. (a) Confining potential $V_k$ for $k = 1, 2,\dots, 9$ (potential $V$ shown in thick black line). (b) Solutions $u$ (thick black line) and $u_k$ (colored thin lines) at $t = T_f$. (c) Norm between $u$ and $u_k$ in $\Omega$ at times $t \in [0, 0.1]$. (d) Norm between $u$ and $u_k$ in $\Omega$ at $t = T_f$ (circles) and norm of $u_k$ in $B \setminus \Omega$ at $t = T_f$ (asterisks).}
\label{fig:nonlocal}
  \end{center}
\end{figure}

In Figure \ref{fig:cas4} 
we consider an example with the same initial condition $u_0$ and potential $V_0$ for all three cases (linear, nonlinear local and nonlinear nonlocal Fokker--Planck equation) so that we can compare the effects that the different terms have in the solution. We consider a simple case with no external potential in $\Omega$, $V_0 = 0$ (see Figure \ref{fig:cas4}(a)) and initial data $u_0 = \chi_{[-1,-0.7]\cup [0.7,1]}$. Figures \ref{fig:cas4}(b-d) show the solutions $u_k$ at $T_f = 0.2$ and for $k = 1, \dots, 10$ (colored lines) and the limit problem solution $u$ (thick black line) in the linear, nonlinear, and nonlocal cases, respectively.

\def \scc {0.7}
\def \scl {0.8}
\begin{figure}
\unitlength=1cm
\begin{center}
\vspace{3mm}
\psfrag{a}[l][][\scl]{(a)} \psfrag{b1}[l][][\scl]{(b)}
\psfrag{b2}[l][][\scl]{(c)} \psfrag{b3}[l][][\scl]{(d)}
\psfrag{x}[][][\scl]{$x$} \psfrag{k}[][][\scl]{$k$} 
\psfrag{Vk}[][][\scl]{$V_k$} \psfrag{uk}[b][][\scl]{$u_k$} 
\psfrag{normt}[b][][\scc]{$\|u-u_k\|$} \psfrag{norm}[b][][\scc]{$\|u-u_k\|, \|u_k\|$} \psfrag{t}[][][\scl]{$t$}
	\includegraphics[height = \size]{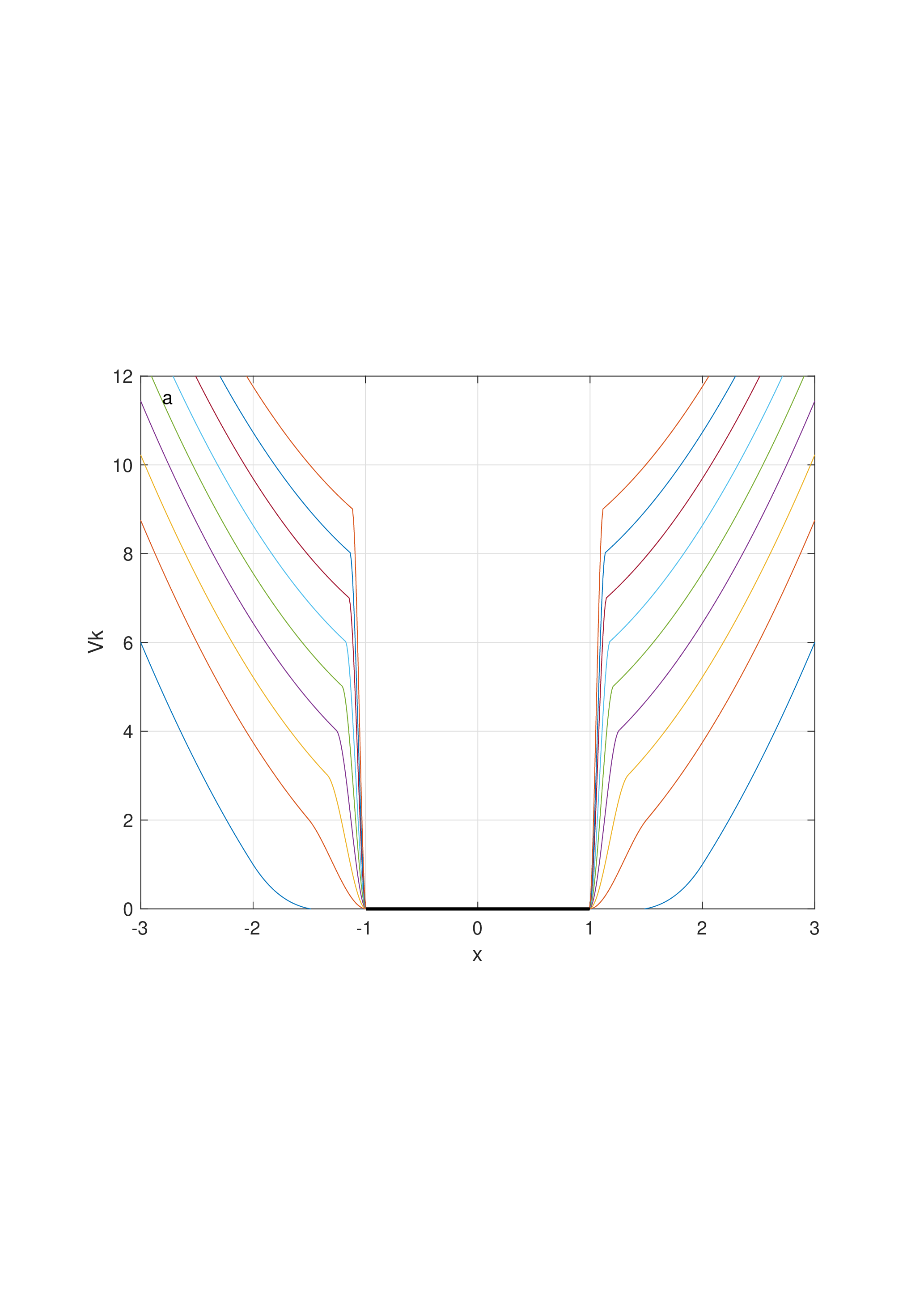} \qquad 
	\includegraphics[height = \size]{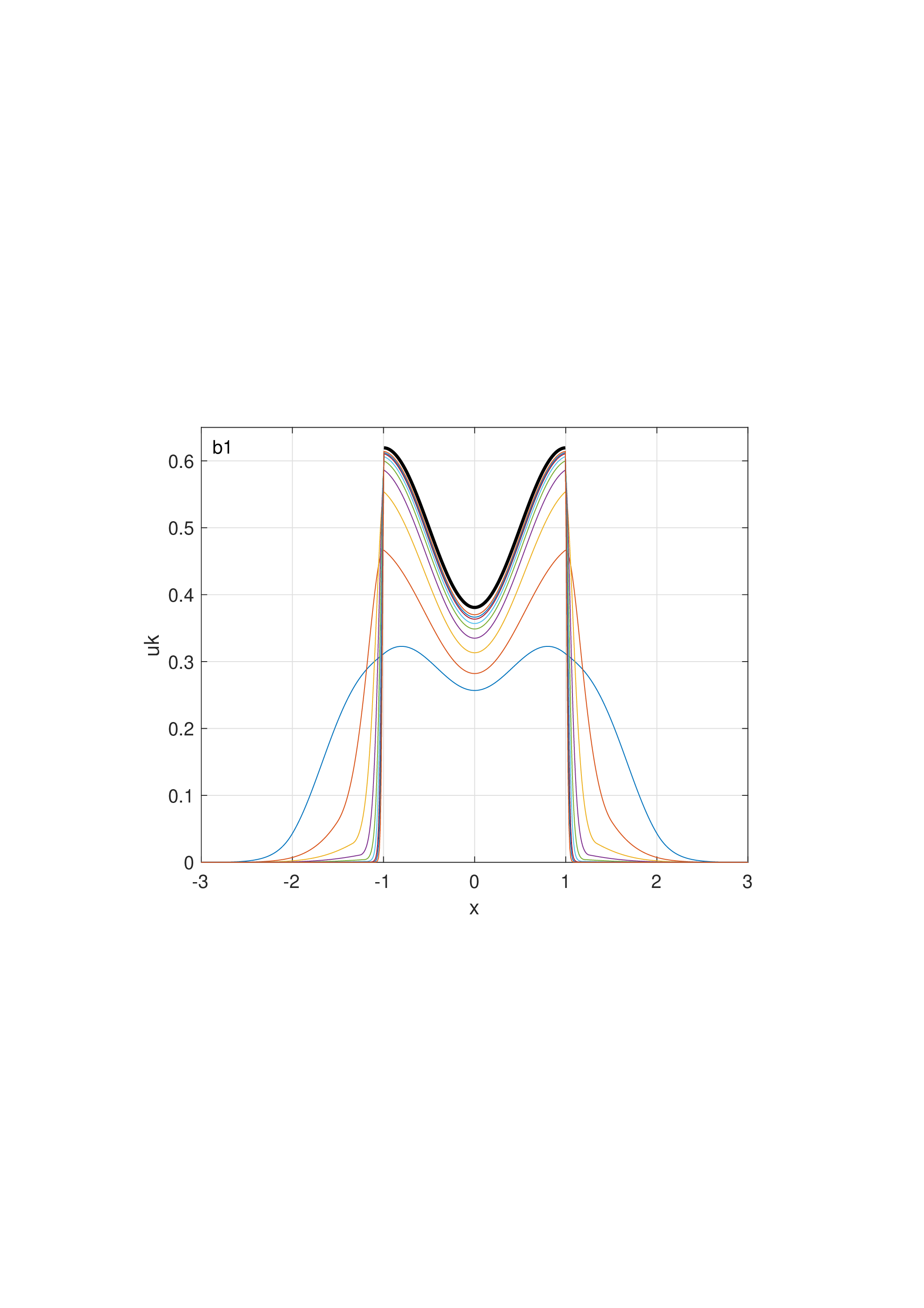}\\ \vspace{5mm}
	\includegraphics[height = \size]{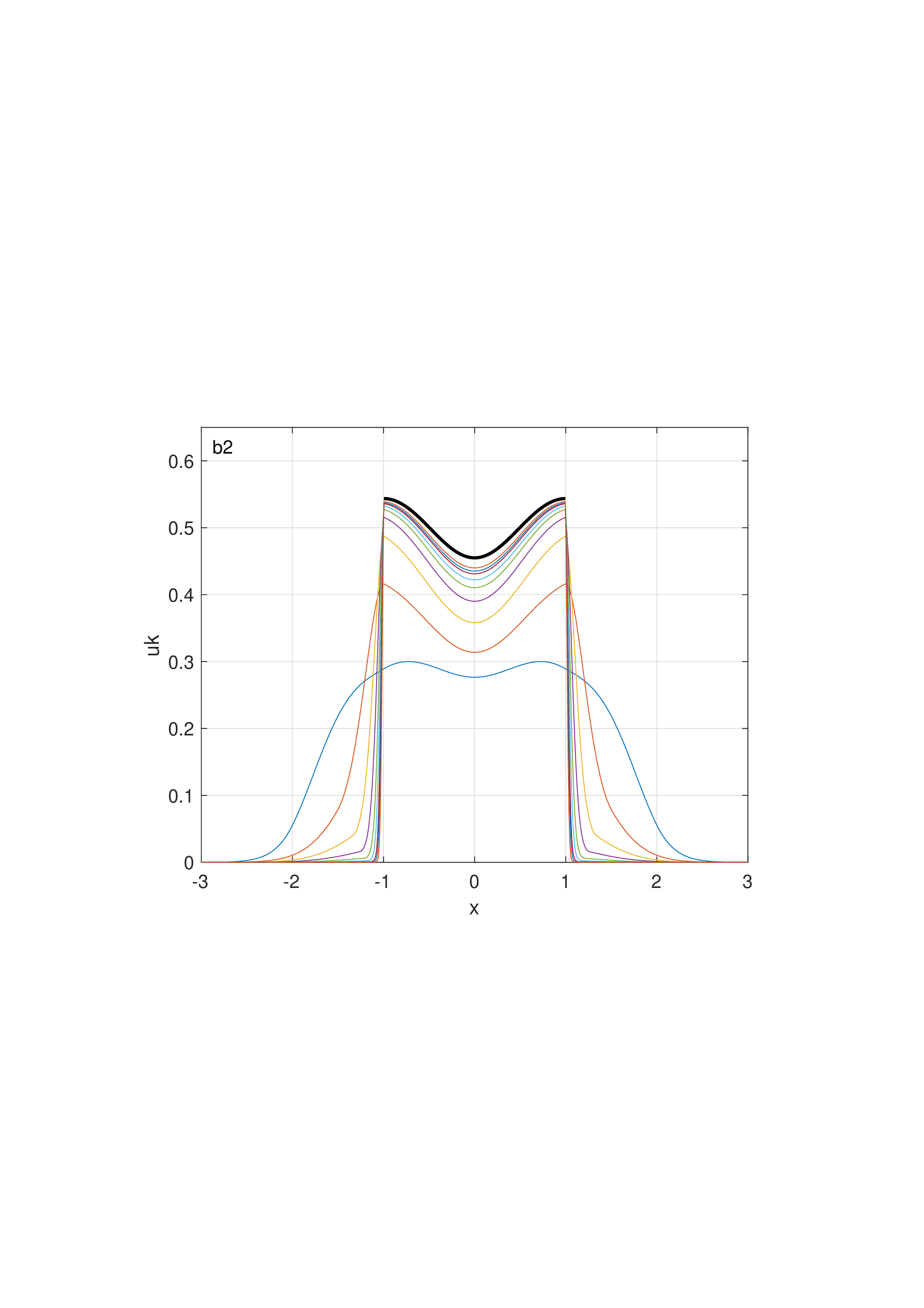} \qquad
	\includegraphics[height = \size]{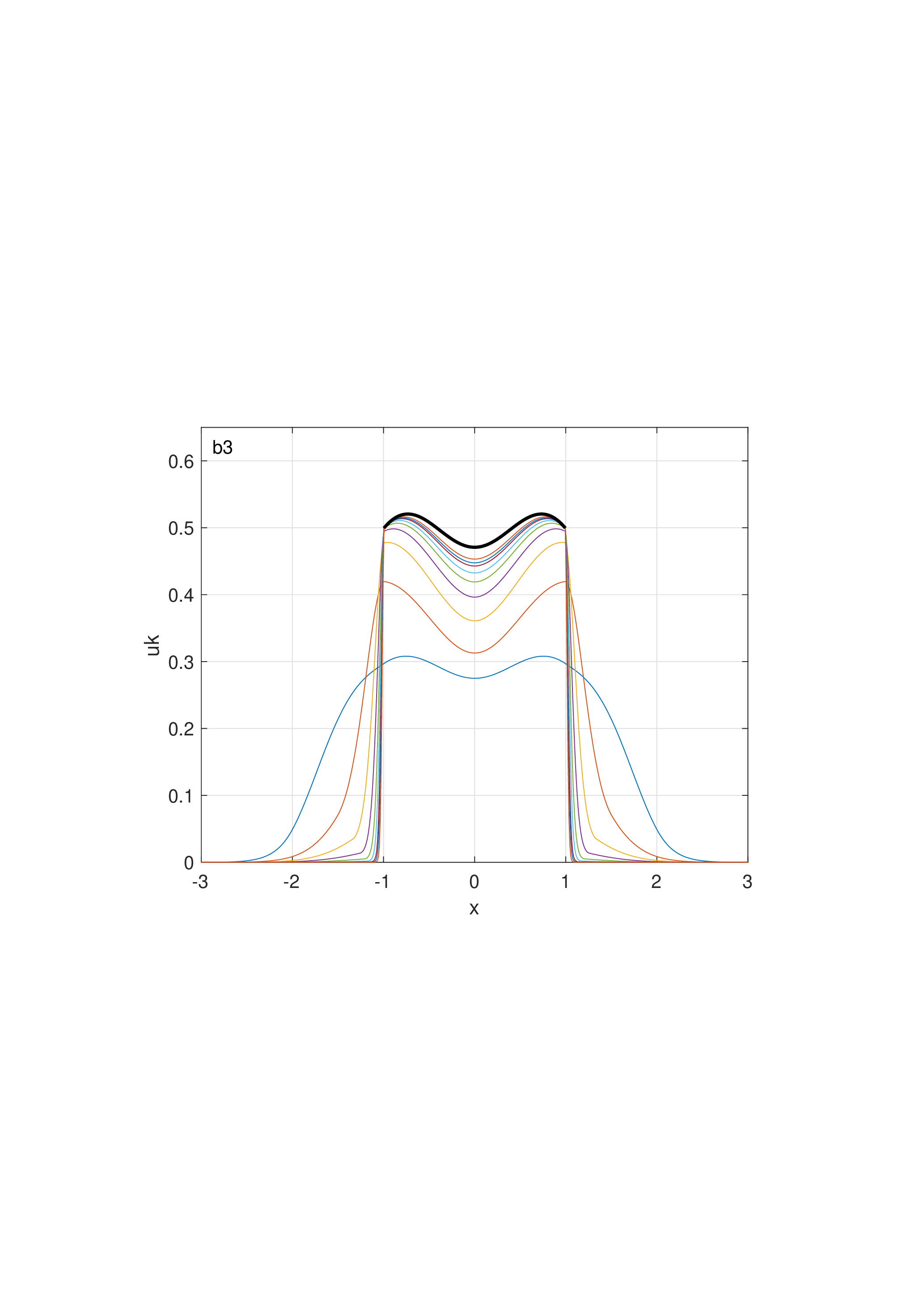} 
\caption{Comparison between models in an example with no external potential in $\Omega$, $V_0 = 0$, and initial data $u_0 = \chi_{[-1,-0.7]\cup [0.7,1]}$. (a) Confining potential $V_k$ for $k = 1, \dots, 10$ (potential $V$ shown in thick black line). (b-d) Solutions $u$ (thick black line) and $u_k$ (colored thin lines) at $t = 0.2$: (b) $\phi = u$ and $W = 0$, (c) $\phi (u) = u + \beta u^2$, with $\beta = 0.49$, and $W= 0$, (d) $\phi (u) = u + \beta u^2$, with $\beta = 0.49$ and $W  = -(1-|x|)_+$. Simulation with $\Omega = [-1, 1]$, computational domain for $u_k$ is $B = [-4, 4]$ ($k \ge 1$). Grid spacing $\Delta x = 0.01$ and initial time-step $\Delta t = 10^{-5}$.}
\label{fig:cas4}
  \end{center}
\end{figure}

For our final one-dimensional simulation, we consider a case (not allowed in our analysis) where part of the support of the initial data $u_0$ lies outside $\Omega = [-1,1]$. In particular, the initial condition for $u_k$ is:
\begin{equation}
	u_0 (x) = \frac{C}{\sqrt{2\pi \sigma^2}} e^{-x^2/(2\sigma^2)}, \qquad x \in B = [-4, 4],
\end{equation}
with $\sigma = 2$ and where $C$ is a constant such that $\int_B u_0 \d x = 1$. The initial condition for the limit problem is $u_0$ constrained in $\Omega$ and the mass that lies outside $\Omega$ placed on the  $x = \pm 1$:
\begin{equation} \label{u0_shifted}
	\bar u_0(x)  = u_0(x) + M_l \delta_{-1}(x) + M_r \delta_{1}(x), \qquad x \in \Omega = [-1, 1],
\end{equation}
where $M_l = \int_{-4}^{-1} u_0 \d x$ and $M_r = \int_1^4 u_0 \d x$. 
We consider again a zero external potential, $V_0 = 0$ (Figure \ref{fig:cas6}(a)), and linear diffusion,  $\phi = u$ and $W = 0$. 
Figure \ref{fig:cas6}(b) shows the solutions $u_k$ at $T_f =2$ for $k$ up to 10, and the solution of the limit problem $u$. We observe nice convergence as $k$ increases, see Figures \ref{fig:cas6}(c) and (d) for the evolution of the error. Figure \ref{fig:cas6time} shows the dynamics up to $t = 1$ of the limit problem, a weak confinement case ($k=2$) and a strong confinement case ($k=10$). 
To implement the initial condition \eqref{u0_shifted}, we placed a Dirac delta on the boundary nodes inside $\Omega=[-1,1]$. Let us denote the grid points in $B=[-4,4]$, regularly spaced by $\Delta x$, as 
$
[l_1, l_2, \dots, l_M, x_1, x_2, \dots, x_N, r_1, r_2, \dots, r_M].
$
Here $l_i$ and $r_i$ are points outside of $\Omega$ (respectively to the left or to the right), and $x_i$ are inside $\Omega$. The points are chosen so that $x_1 = -1 + \Delta x/2$ and $x_N= 1-\Delta x/2$ (as standard in finite-volume schemes).
The initial datum for the limit problem in $\Omega$ is set as follows:
\begin{align*}
    M_l &= \Delta x \sum_{i=1}^M u_0(l_i), \qquad M_r = \Delta x \sum_{i=1}^M u_0(r_i),\\
    \bar u_0^i &= u_0(x_i), \quad i = 2, \dots, N-1,\\
    \bar u_0^1 &= u_0(x_1) + M_l/\Delta x, \qquad \bar u_0^N = u_0(x_N) + M_r/\Delta x.
\end{align*}
Note that the factor $1/\Delta x$ is required as we are spreading a punctual mass over a compartment of length $\Delta x$.

\def \scc {0.7}
\def \scl {0.8}
\begin{figure}
\unitlength=1cm
\begin{center}
\vspace{3mm}
\psfrag{a}[l][][\scl]{(a)} \psfrag{b}[l][][\scl]{(b)}
\psfrag{c}[l][][\scl]{(c)} \psfrag{d}[l][][\scl]{(d)}
\psfrag{x}[][][\scl]{$x$} \psfrag{k}[][][\scl]{$k$} 
\psfrag{Vk}[][][\scl]{$V_k$} \psfrag{u}[b][][\scl]{$u_k(T_f)$} 
\psfrag{normt}[b][][\scc]{$\|u-u_k\|$} \psfrag{norm}[b][][\scc]{$\|u-u_k\|, \|u_k\|$} \psfrag{t}[][][\scl]{$t$}
	\includegraphics[height = \size]{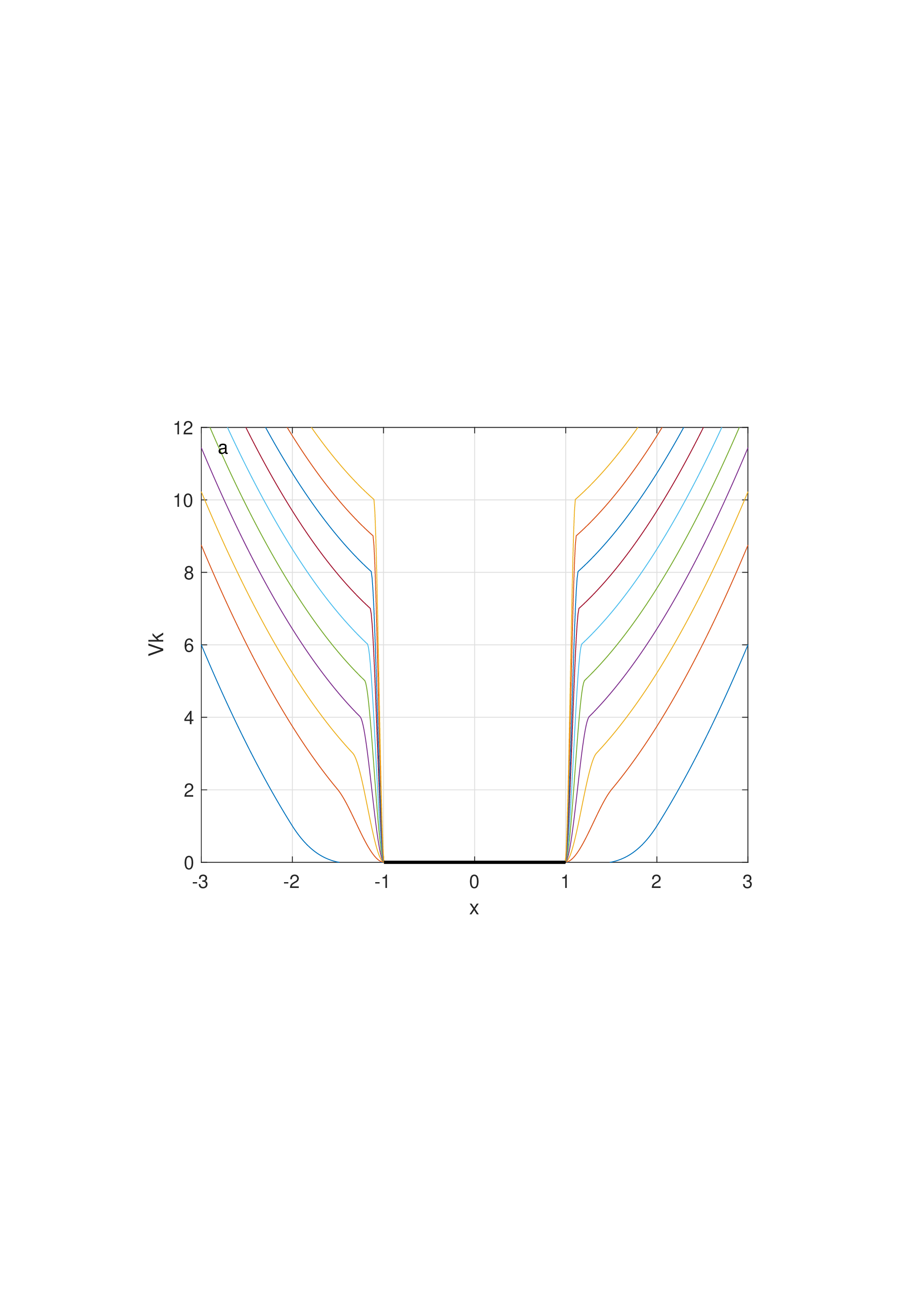} \qquad 
	\includegraphics[height = \size]{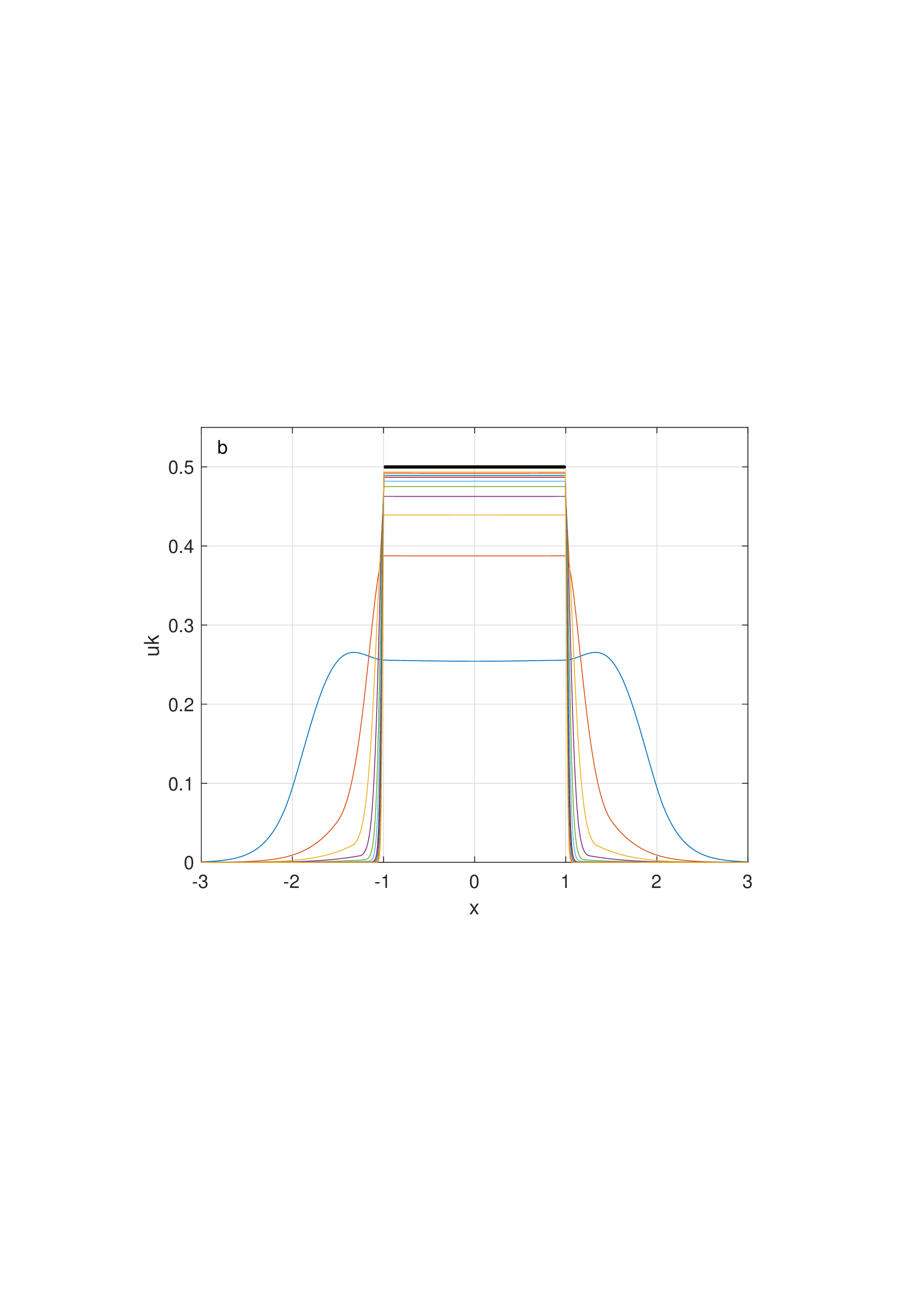}\\ \vspace{5mm}
	\includegraphics[height = \size]{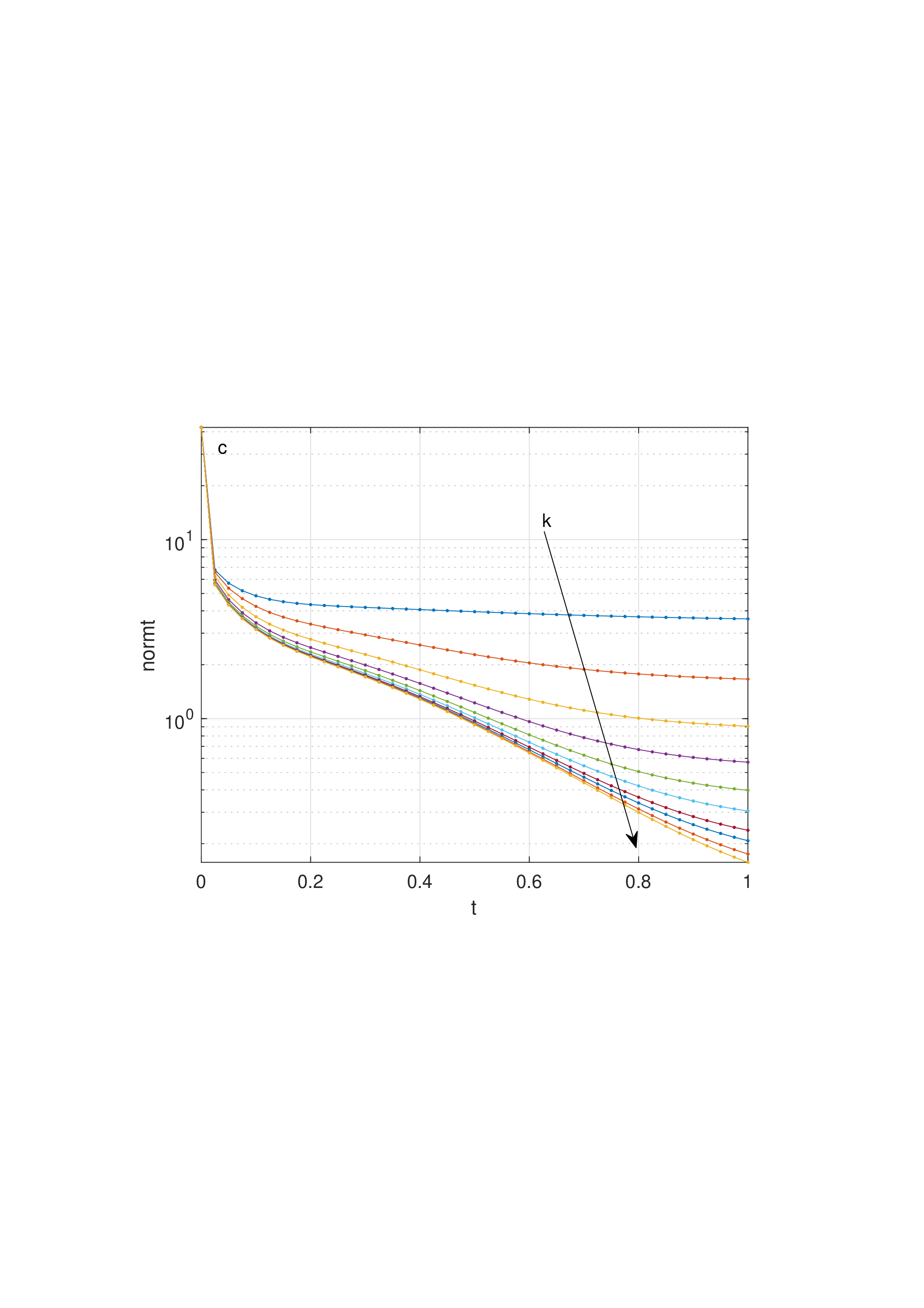} \qquad
	\includegraphics[height = \size]{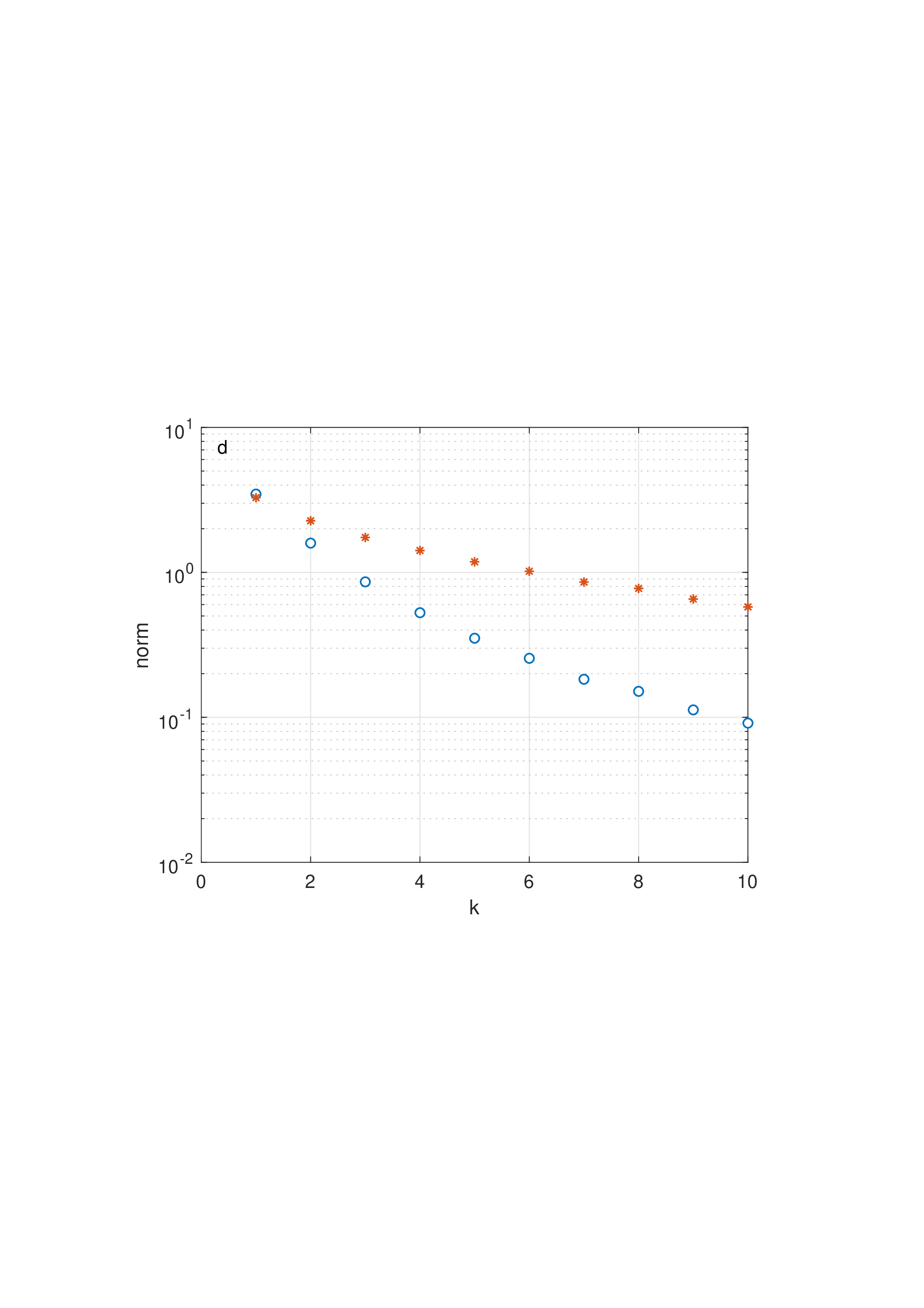} 
\caption{Example with an initial condition with support outside $\Omega$, $u_0$ Gaussian with $\mu = 0$ and $\sigma = 2$. Linear diffusion, $V_0 = 0$ and $T_f = 2$. Simulation with $\Omega = [-1, 1]$, computational domain for $u_k$ is $B = [-4, 4]$ ($k \ge 1$). Grid spacing $\Delta x = 0.01$ and initial time-step $\Delta t = 10^{-5}$. (a) Confining potential $V_k$ for $k = 1, \dots, 10$ (potential $V$ shown in thick black line). (b) Solutions $u$ (thick black line) and $u_k$ (colored thin lines) at $t = T_f$. (c) Norm between $u$ and $u_k$ in $\Omega$ at times $t \in [0, 0.1]$. (d) Norm between $u$ and $u_k$ in $\Omega$ at $t = T_f$ (circles) and norm of $u_k$ in $B \setminus \Omega$ at $t = T_f$ (asterisks).}
\label{fig:cas6}
  \end{center}
\end{figure}

\begin{figure}
\begin{center}
\includegraphics[width = .325\textwidth]{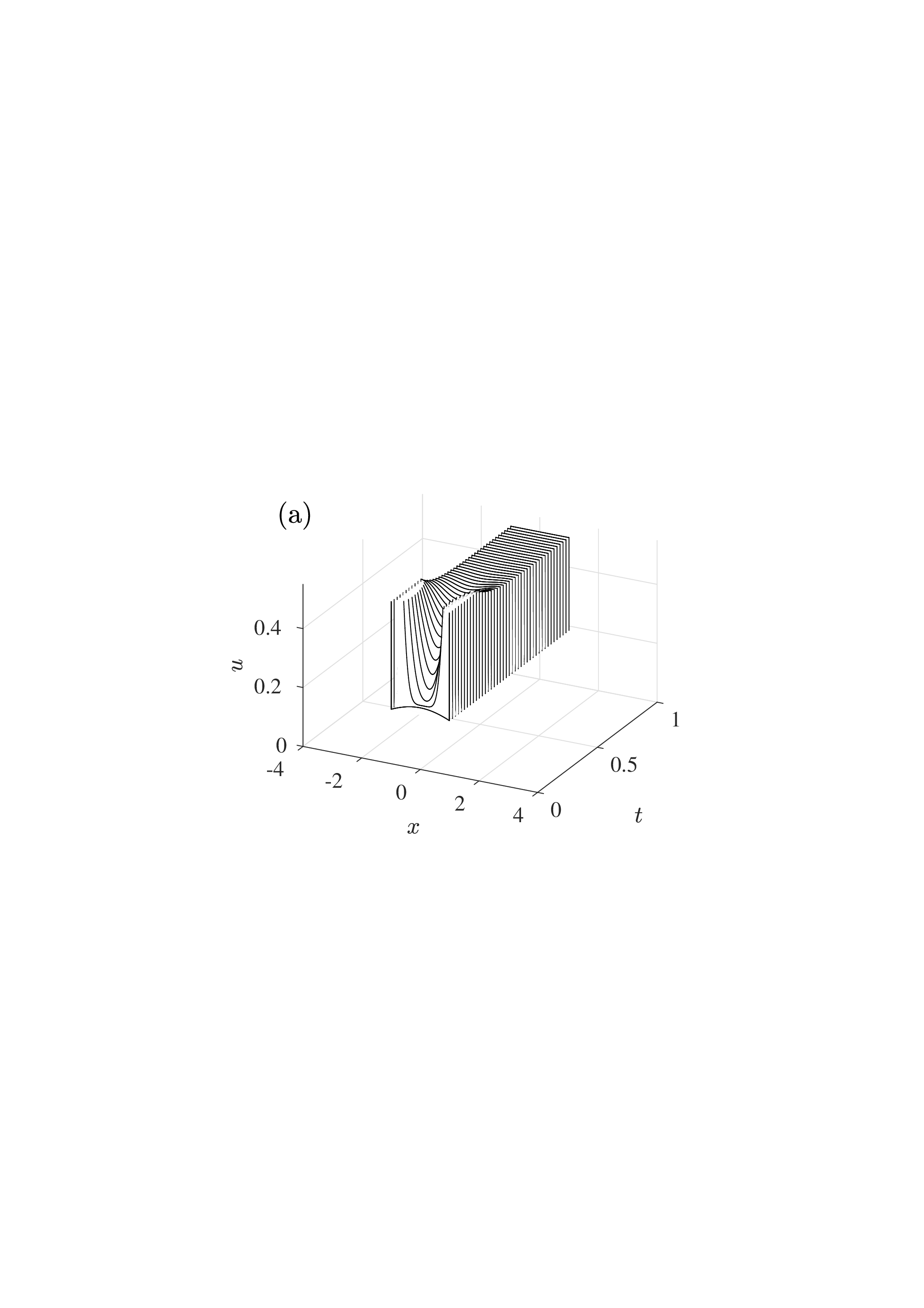} 
\includegraphics[width = .325\textwidth]{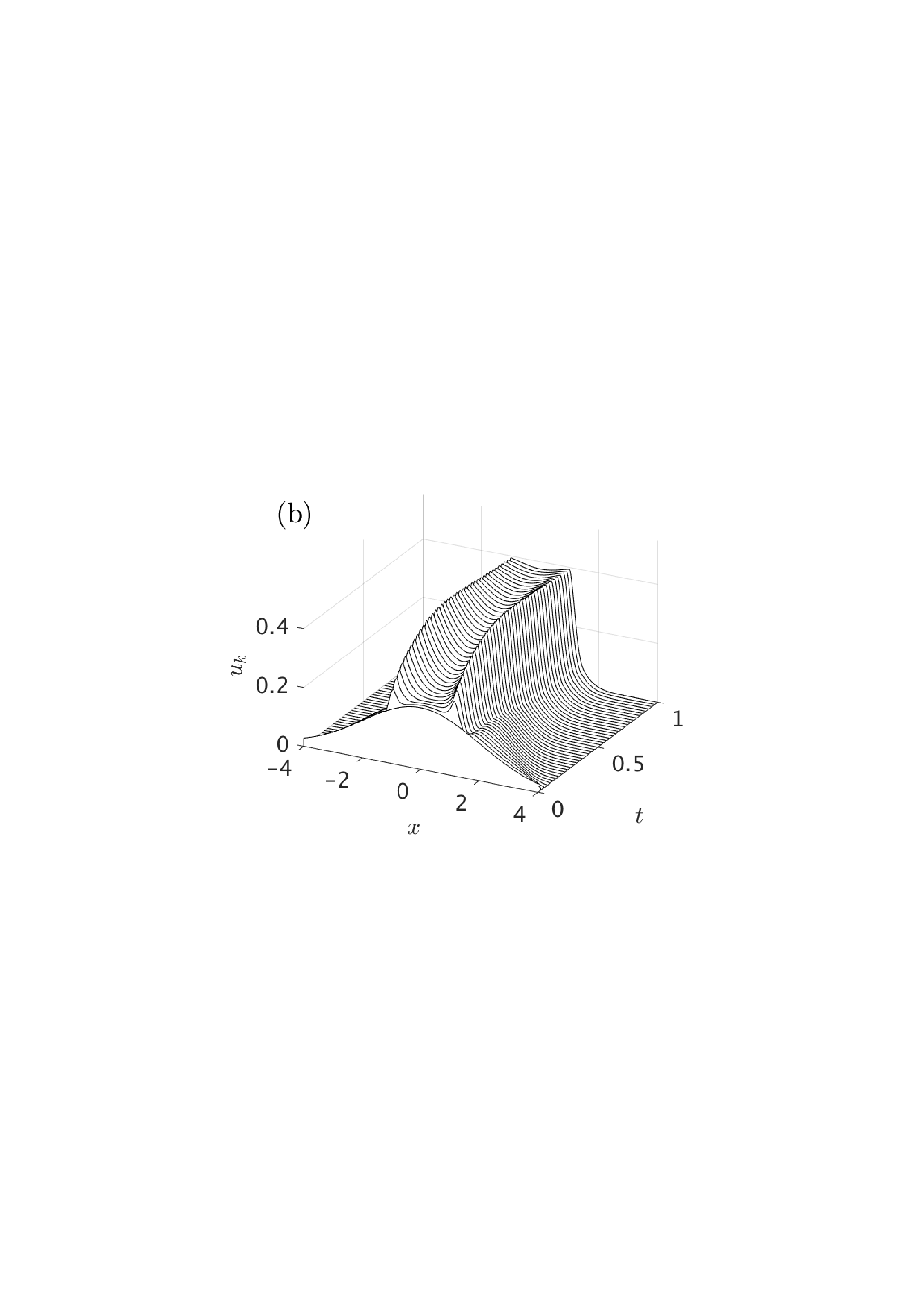} 
\includegraphics[width = .325\textwidth]{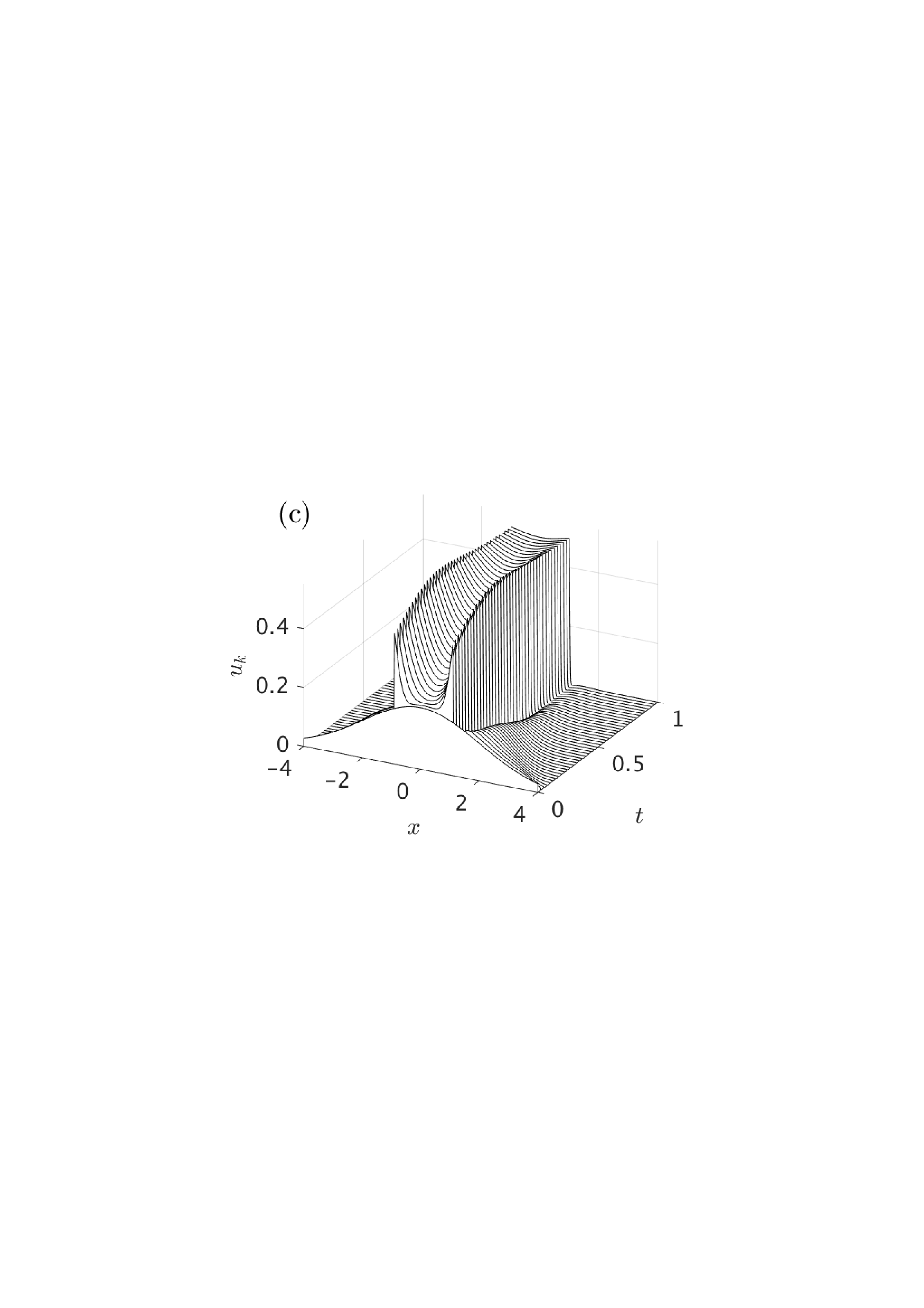} 
\caption{
The dynamics of the solutions shown in Figure \ref{fig:cas6} until time $t = 1$ for (a) in $\Omega$ ($k=0$), (b) a weak confining potential ($k = 2$), and (c) a strong confining potential ($k = 10$). In (a) the vertical axis is truncated (it'd go up to 30, mass of the delta functions at the edges).}
\label{fig:cas6time}
  \end{center}
\end{figure}

\newpage
\FloatBarrier

\subsection{Two-dimensional examples}

In one dimension it seems reasonable to say that \eqref{u0_shifted} is the only way to move the initial mass outside $\Omega$ towards $\partial \Omega$. This would imply the convergence towards a unique limit problem regardless of the confining potential $V_k$. However, it is not clear if the same would hold true in higher dimensions, where there are multiple ways of transporting mass from outside $\Omega$ to $\partial \Omega$. For example, suppose that $\Omega = \{x \in \mathbb R^2:  |x | \le 1 \}$. Then among many options, the mass could be sent to $|x | = 1$ radially or proportionally to the strength of $V_k$.

To explore what happens when the initial data has support outside $\Omega$ in two dimensions, we consider the square domain $\Omega = [-1, 1]^2$ and $B = [-4, 4]^2$. Again for simplicity we work with the linear problem, setting $\phi = u$ and $W = 0$, and also $V_0 =0$. We choose $\Omega_k = [-L_k, L_k]^2$, with $L_k = 1 + 1/k$. We consider four scenarios, combining the cases when the initial datum $u_0$ and/or the confinement potential $V_k$ are radially symmetric or not. As a radially symmetric initial data we use the following volcano-shaped function (see Figure \ref{fig:cas2d_u0}(a))
\begin{equation}
	\label{u0_sym}
	u_0 = C e^{-5(r-1)^2},
\end{equation}
where $C$ is a normalization constant (so that $u_0$ has unit mass in $B$) and $r = \sqrt{x_1^2 + x_2^2}$ with $(x_1,x_2) \in B = [-4, 4]^2$. As a non-radially symmetric initial data we use (see Figure \ref{fig:cas2d_u0}(b))
\begin{equation}
	\label{u0_nonsym}
	u_0 =  C \left[1+ \frac{1}{2}\sin\left(\frac{x_2}{r}\right)\right] e^{-5(r-1)^2},
\end{equation}
where $C$ is again the normalization constant. 

\begin{figure}
	\includegraphics[width = .49\textwidth]{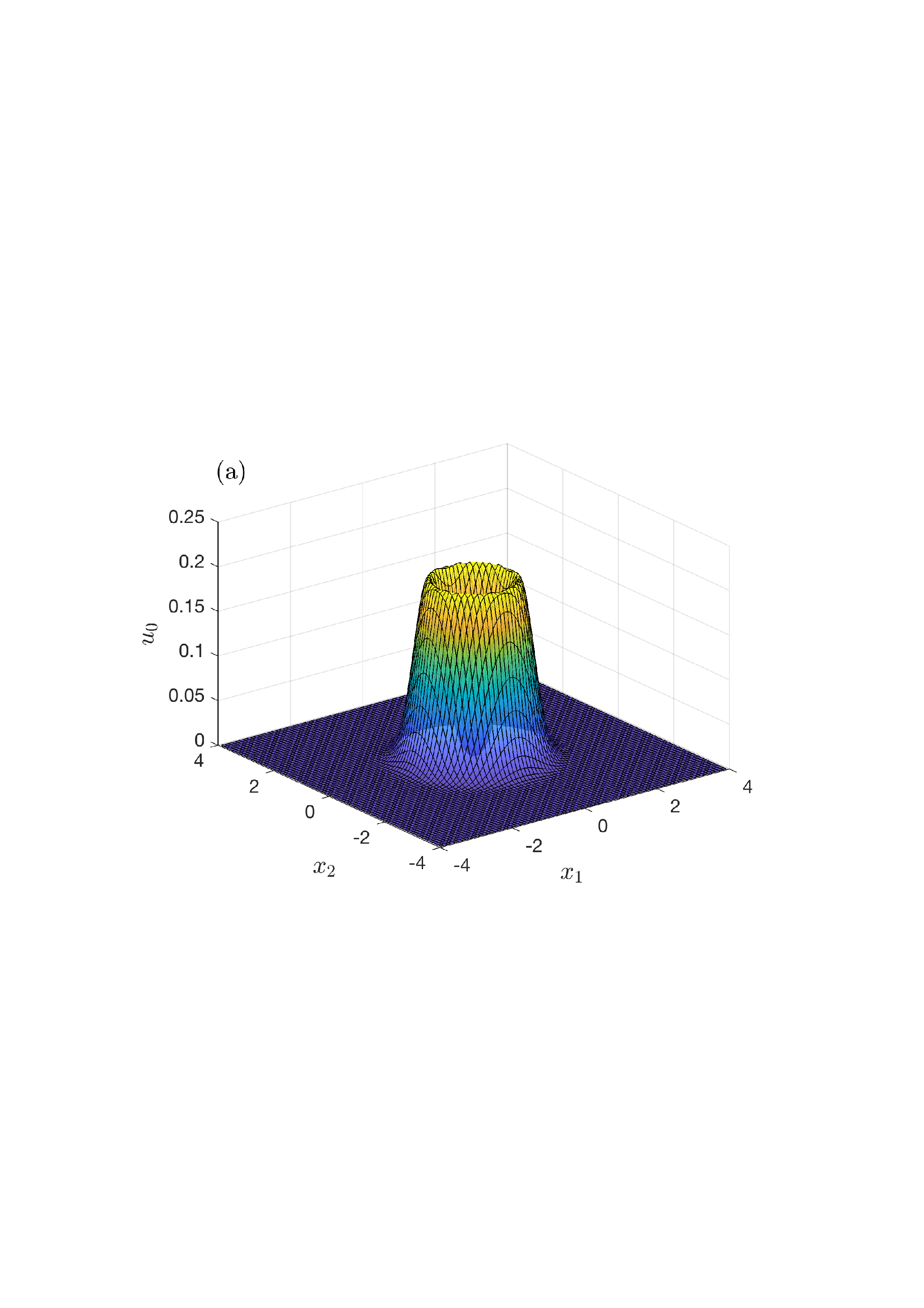}  \quad 	\includegraphics[width = .49\textwidth]{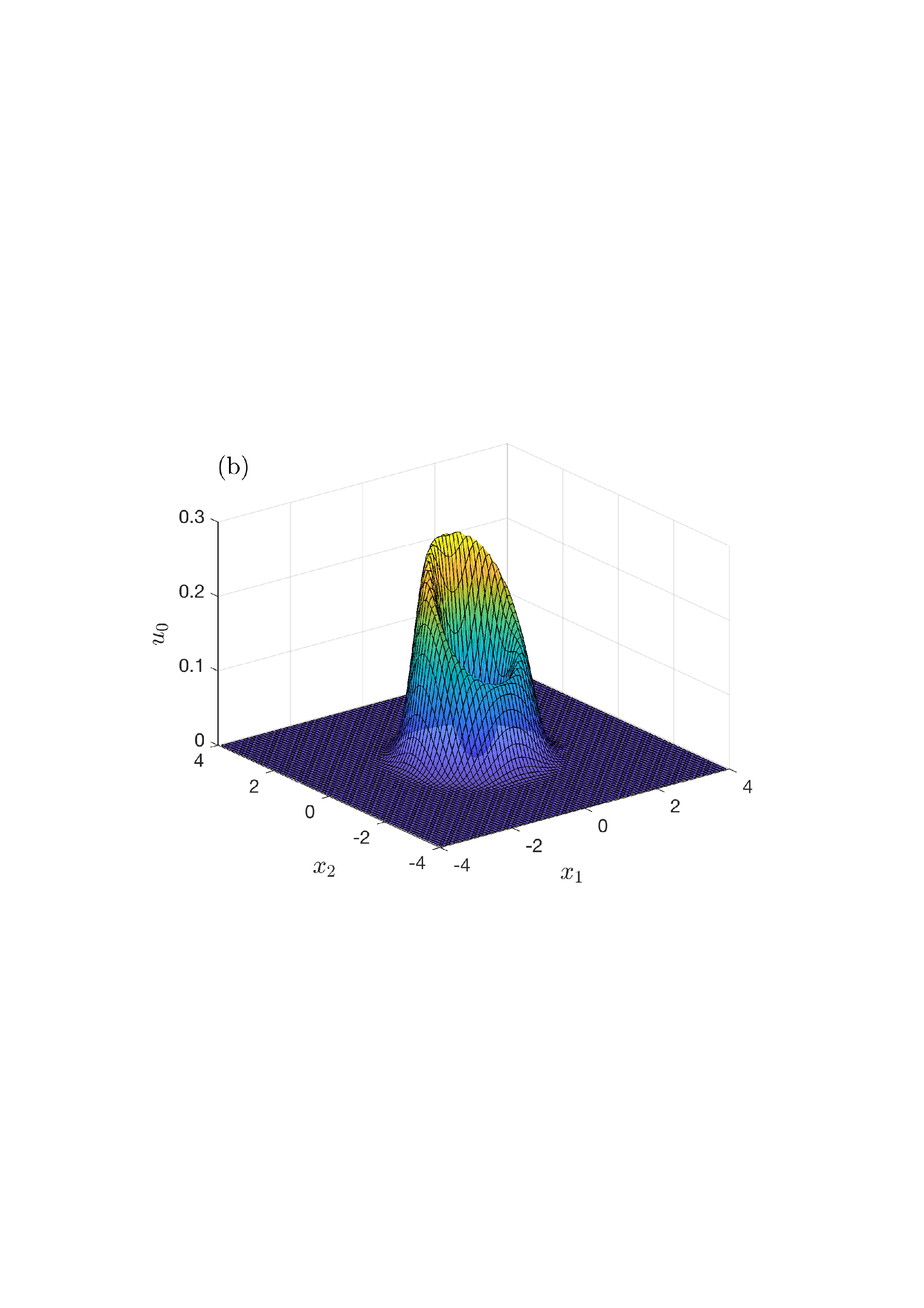} \\ \includegraphics[width = .49\textwidth]{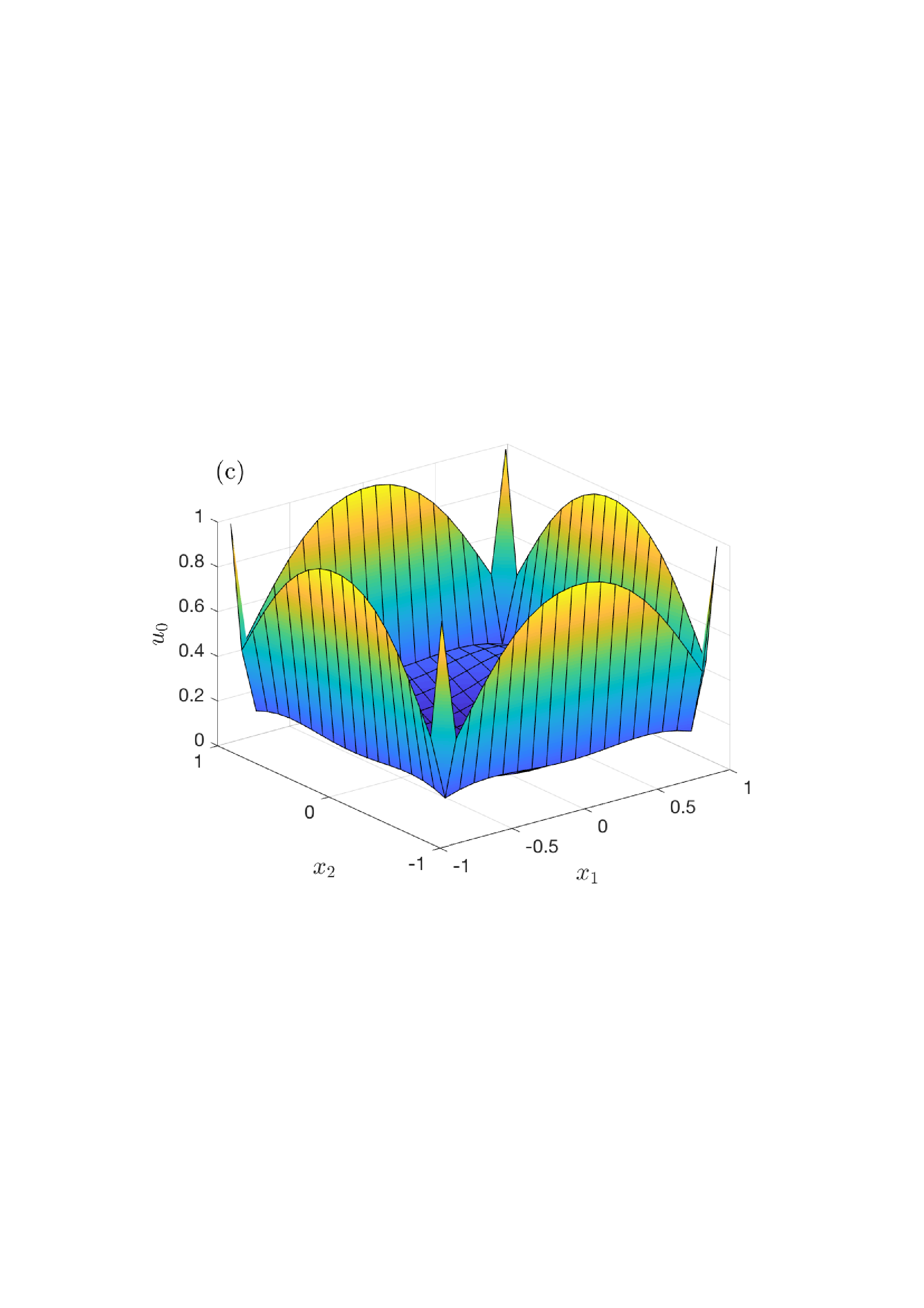} \quad
\includegraphics[width = .49\textwidth]{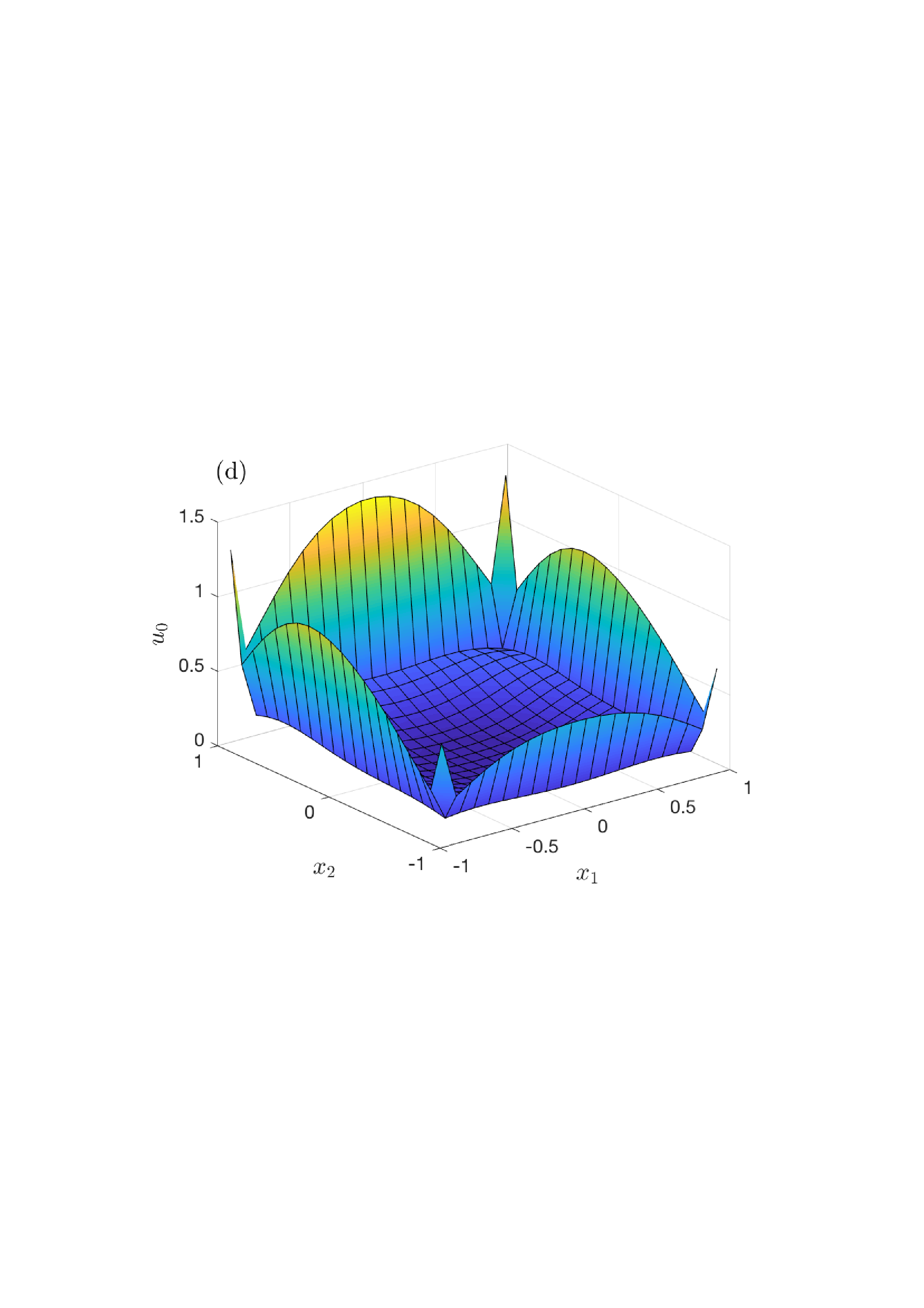} \\ \includegraphics[width = .49\textwidth]{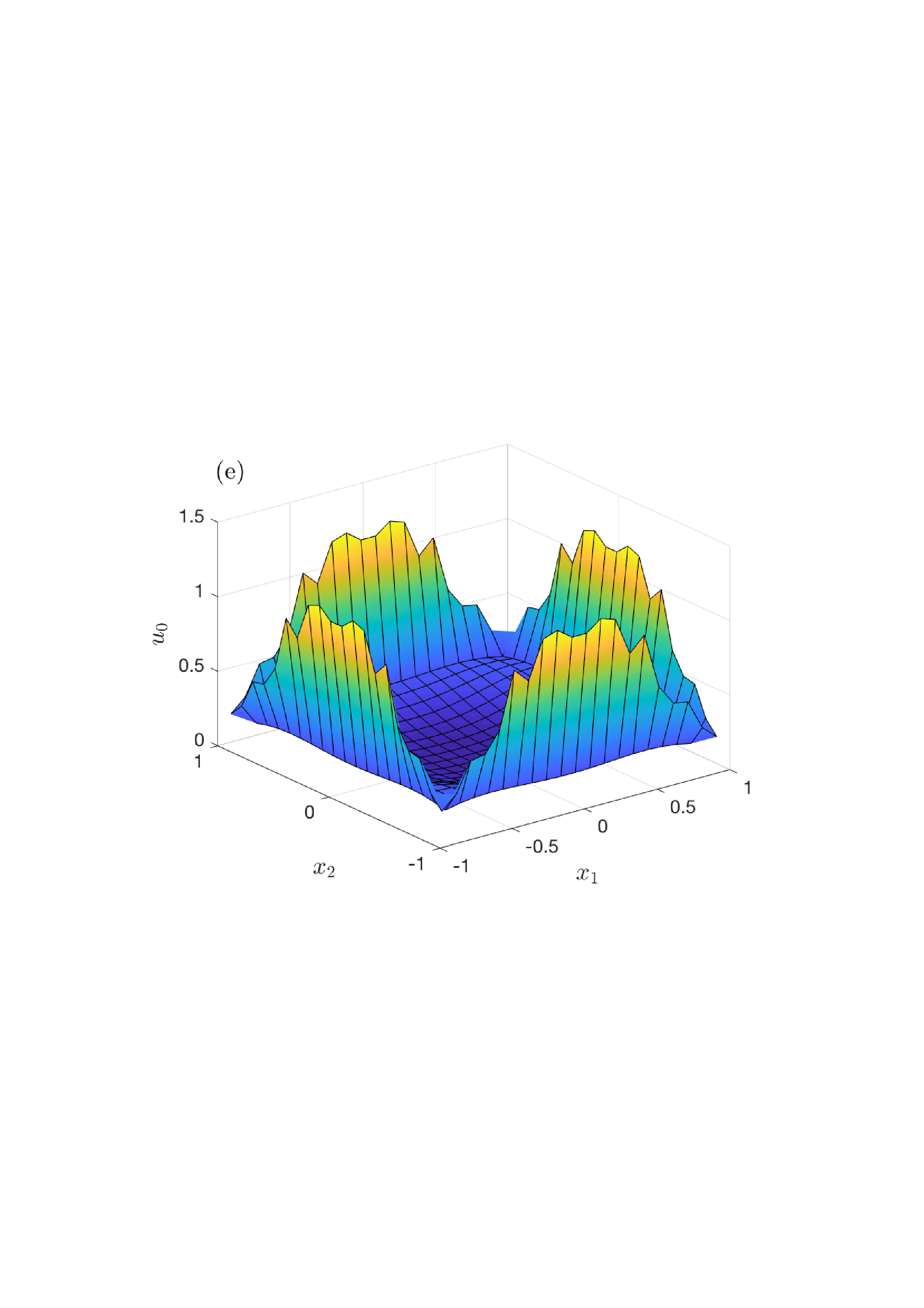} \quad
\includegraphics[width = .49\textwidth]{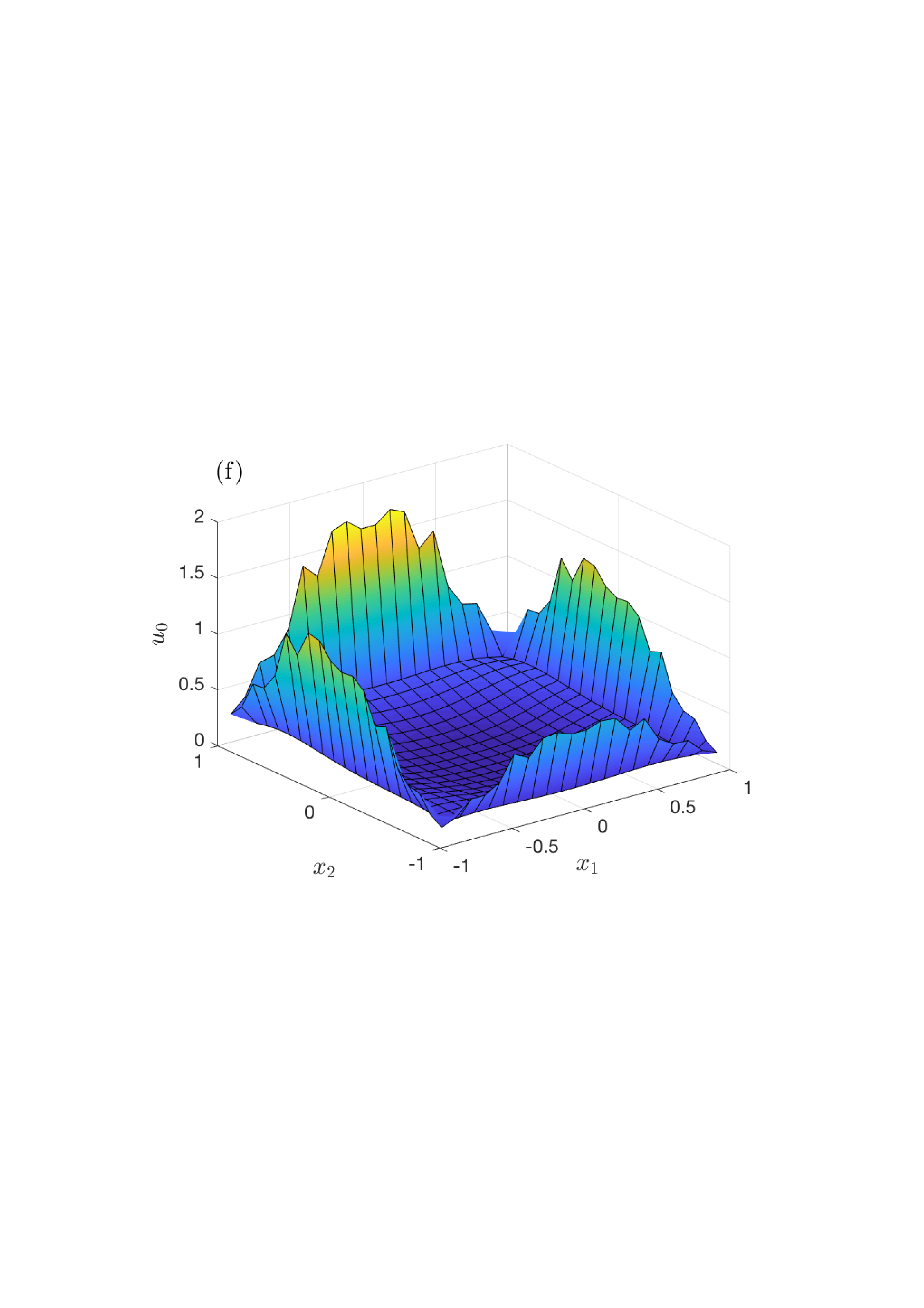} 
	\caption{Initial conditions used for the two-dimensional examples in $B$ (a-b) and in $\Omega$ (c-f). (a) Radially symmetric $u_0$. (b) Non-radially symmetric $u_0$. (c) Radially symmetric $\bar u_0$ obtained by the first mass transport method. (d) Non-radially symmetric $\bar u_0$ obtained by the first mass transport method. (e) Radially symmetric $\bar u_0$ obtained by the second mass transport method. (f) Non-radially symmetric $\bar u_0$ obtained by the second mass transport method. }
\label{fig:cas2d_u0}
\end{figure}

We use two methods to transport the initial mass from outside $\Omega$ to its boundary. The first method consists of sending the mass perpendicular to $\partial \Omega$ and accumulate at the corners of $\partial \Omega$ all the mass in the regions $\{|x_1|>1$ and $|x_2| > 1\}$. 
The resulting initial conditions $\bar u_0$ for the limit problem corresponding to  \eqref{u0_sym} and \eqref{u0_nonsym} are shown
in Figures \ref{fig:cas2d_u0}(c) and \ref{fig:cas2d_u0}(d) respectively. 
This method leads to a concentration of mass at the four corners of $\Omega$. The second method we consider consists of transporting the mass radially from the origin. 
Specifically, if $U_0^{ij} = u_0(x_i, x_j)$ with $(x_i, x_j) \notin \Omega$, then we compute $\theta^{ij} = \tan^{-1}(x_j,x_i)$ (we use the four-quadrant inverse tangent, \texttt{atan2} in Matlab) and send the mass $U_0^{ij}$ to the grid point on $\partial \Omega$ whose angle is closest to $\theta^{ij}$. 
The initial conditions $\bar u_0$ for the limit problem corresponding to \eqref{u0_sym} and \eqref{u0_nonsym} and computed using the second method  are shown in Figures \ref{fig:cas2d_u0}(e) and \ref{fig:cas2d_u0}(f) respectively.

For the confinement potentials, we use $V_k (x) = k + r^2 - L_k^2$ for $x \notin \Omega$ for the radially symmetric case
and $V_k (x) = k + \tilde r^2 - L_k^2$ with $\tilde r = [1 + \sin(x_2/r)/2] r$ for the non-radially symmetric case.
We run simulations for a short time ($T_f = 0.01$) comparing the solution $u_k$ in $B$ for $k = 25$ with either symmetric or asymmetric initial data (see Figures \ref{fig:cas2d_u0}(a-b)) and using either a symmetric or asymmetric confinement potential $V_k$ with the solution $u$ to the limit problem in $\Omega$ with initial data $\bar u_0$ prescribed using one of the two methods described above (see Figures \ref{fig:cas2d_u0}(c-f)). 

In all possible combinations, we find  that the simulation results (not shown) are not particularly sensitive to the shape of the confinement potential, and little sensitive to the method of transporting the mass of $u_0$ from outside $\Omega$ to $\partial \Omega$ (see Figure \ref{fig:cas2d_u0}(c-d) for method 1 and (e-f) for  method 2).
The first transportation method (moving mass perpendicular to $\partial \Omega$) leads to a smaller difference at all times, but this could be determined by the choice of $\Omega$ and its discretization.
The choice of a non-symmetric initial datum affects the shape of the solutions but it doesn't seem to affect the previous considerations concerning mass transportation particularly.
Such insensitivity is somewhat counter-intuitive since, in dimension greater that one, we would expect the limit problem to vary depending on the choice of the confinement potential, and in particular on the way it diverges to infinity outside $\Omega$.

We next try a more extreme  two-dimensional case, still with $\Omega = [-1,1]^2$. 
In particular, we compare a simple confinement potential (without buffer zone, $\Omega_k\equiv \Omega$, and quadratic outside $\Omega$, see Figure \ref{fig:moses_rhoV}(a)) and the following potential ( see Figure \ref{fig:moses_rhoV}(b)):
\begin{equation} \label{moses_potential}
	V_k = \begin{cases}
		0, & \text{if } x \in \Omega \text{ or } |x_1| < 1/\sqrt{k},\\
		k + x_1^2 + x_2^2, & \text{otherwise}. 
	\end{cases}
\end{equation}
We work with the linear Fokker--Planck equation in $B = [-4, 4]^2$, initial condition $u_0 = C \exp(-|x|/2)$ with $C$ normalisation condition and zero potential inside $\Omega$, $V_0 = 0$. We look at the behaviour as $k$ increases (we solve for $k=5,10, 15, 20$) and very short times ($t = 0, 0.001, \dots, 0.01$). 

\begin{figure}
\begin{center}
	\includegraphics[width=.45\textwidth]{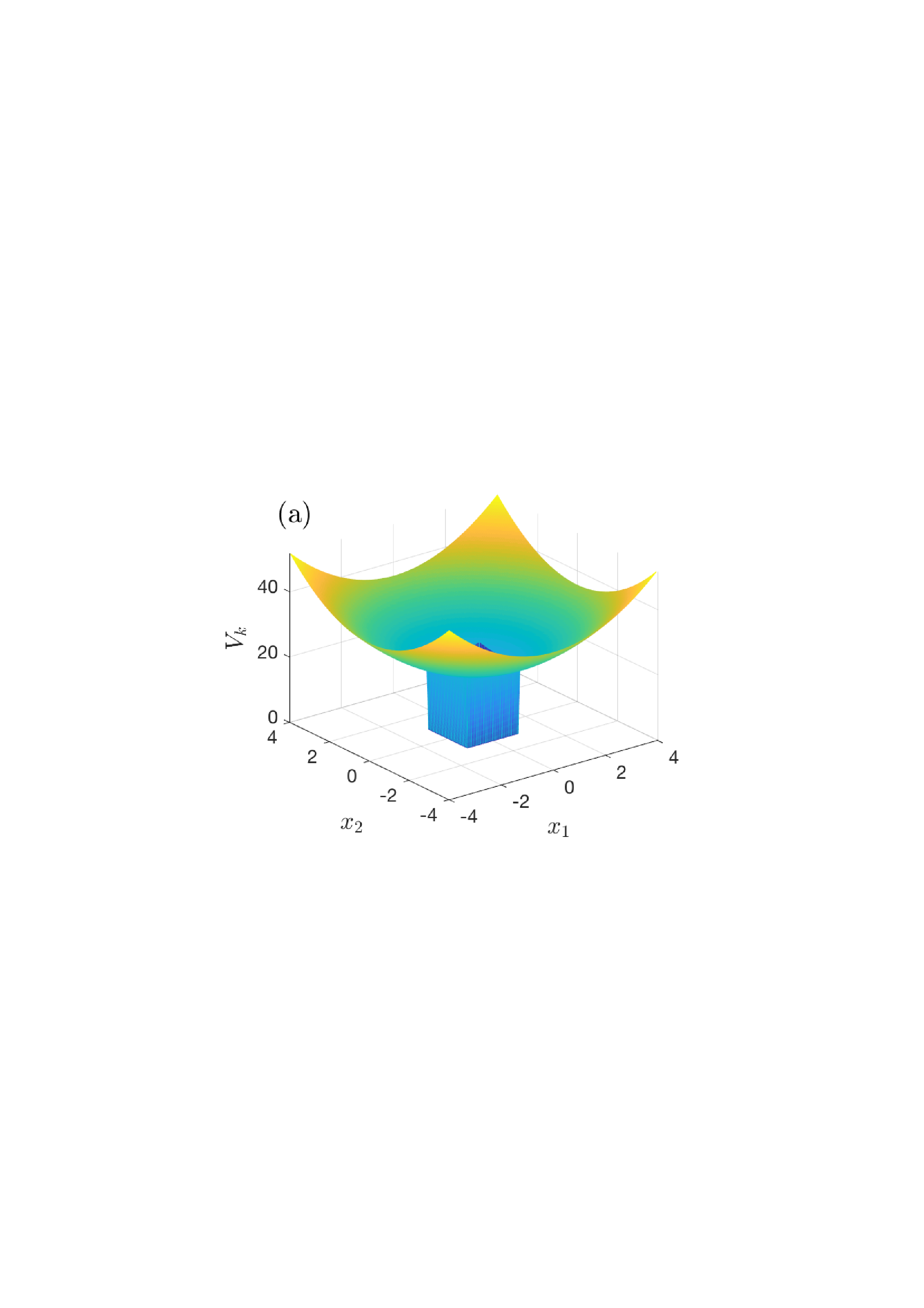} \
	\includegraphics[width=.45\textwidth]{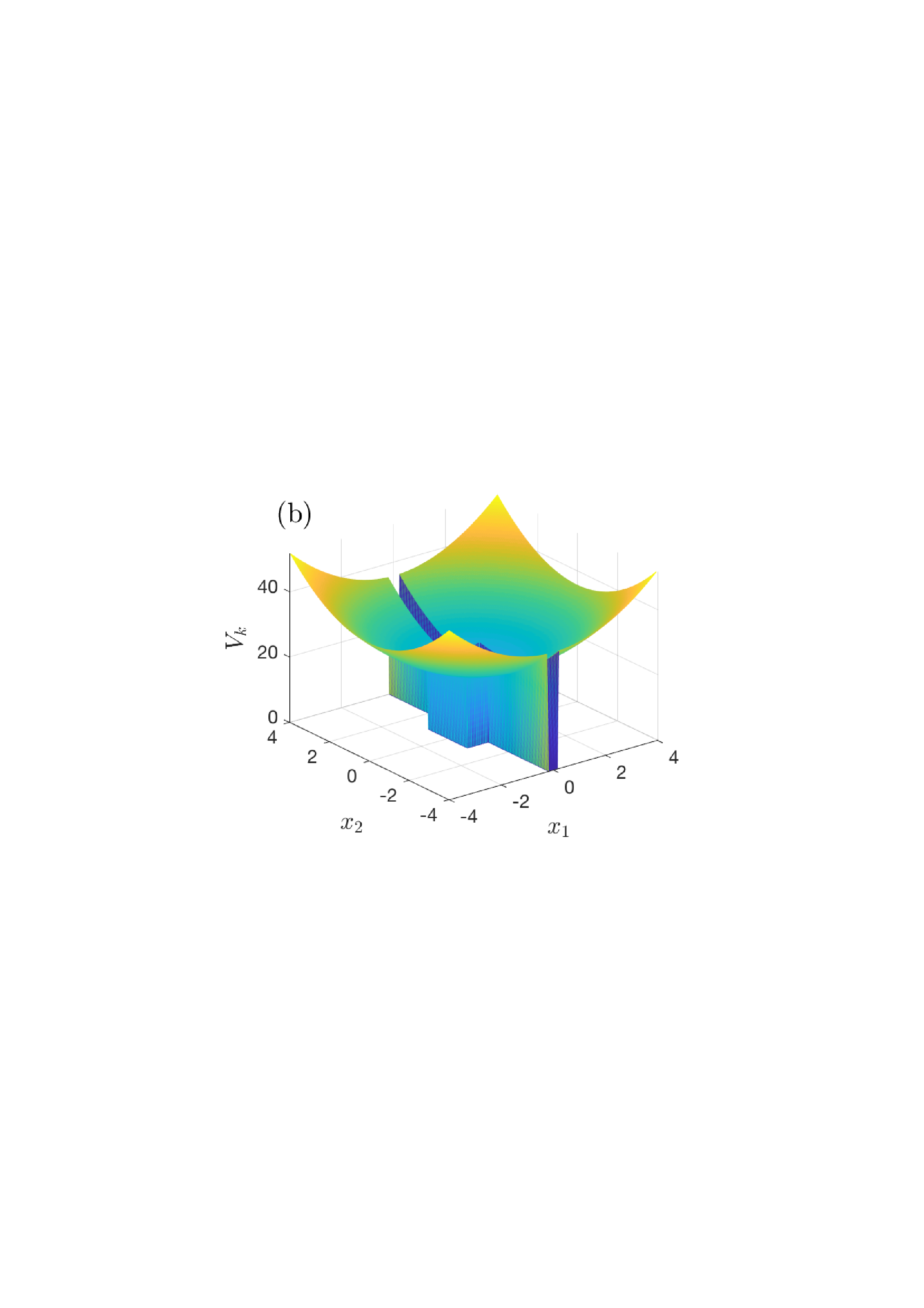} \\
	\includegraphics[width=.45\textwidth]{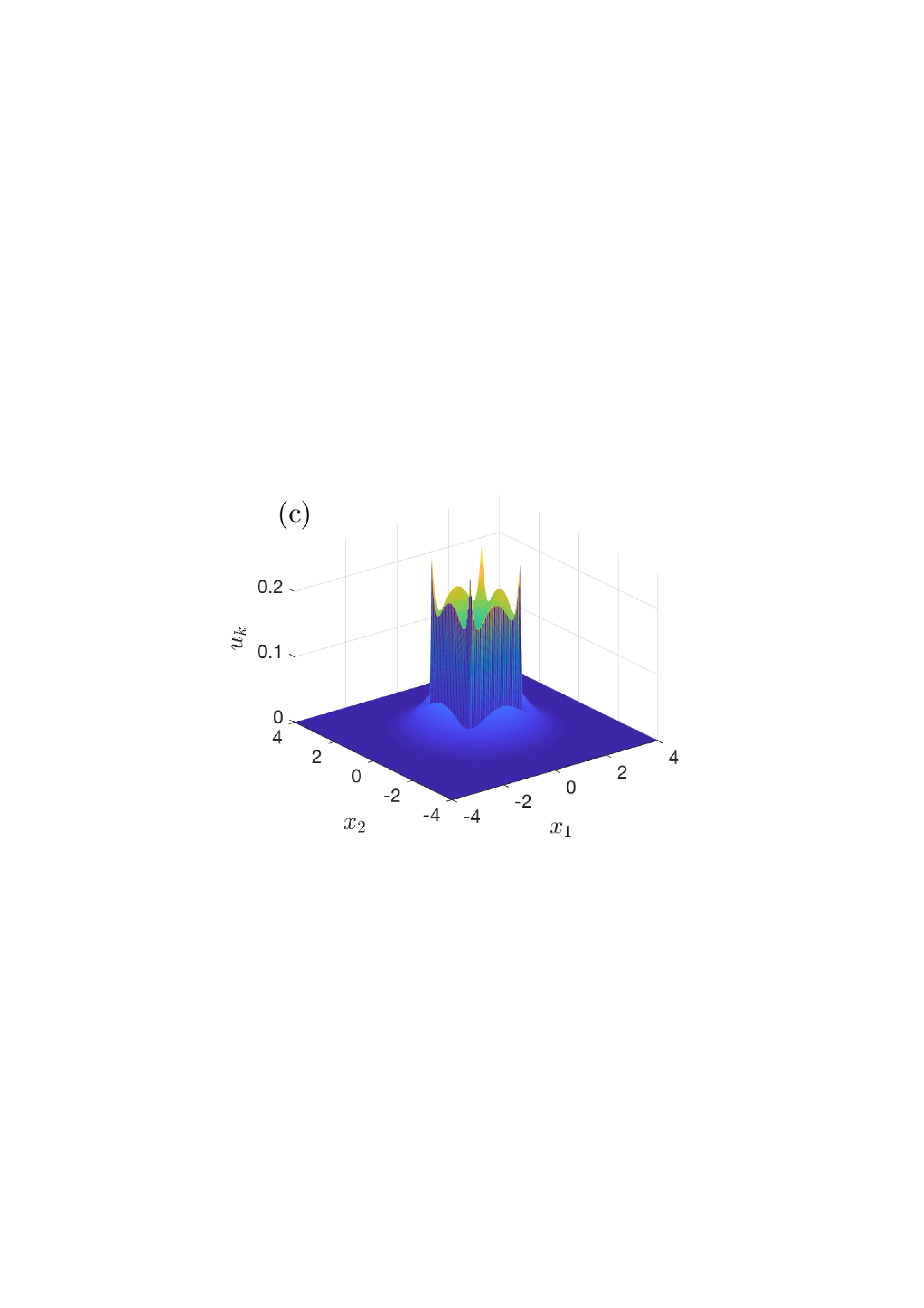}\
	\includegraphics[width=.45\textwidth]{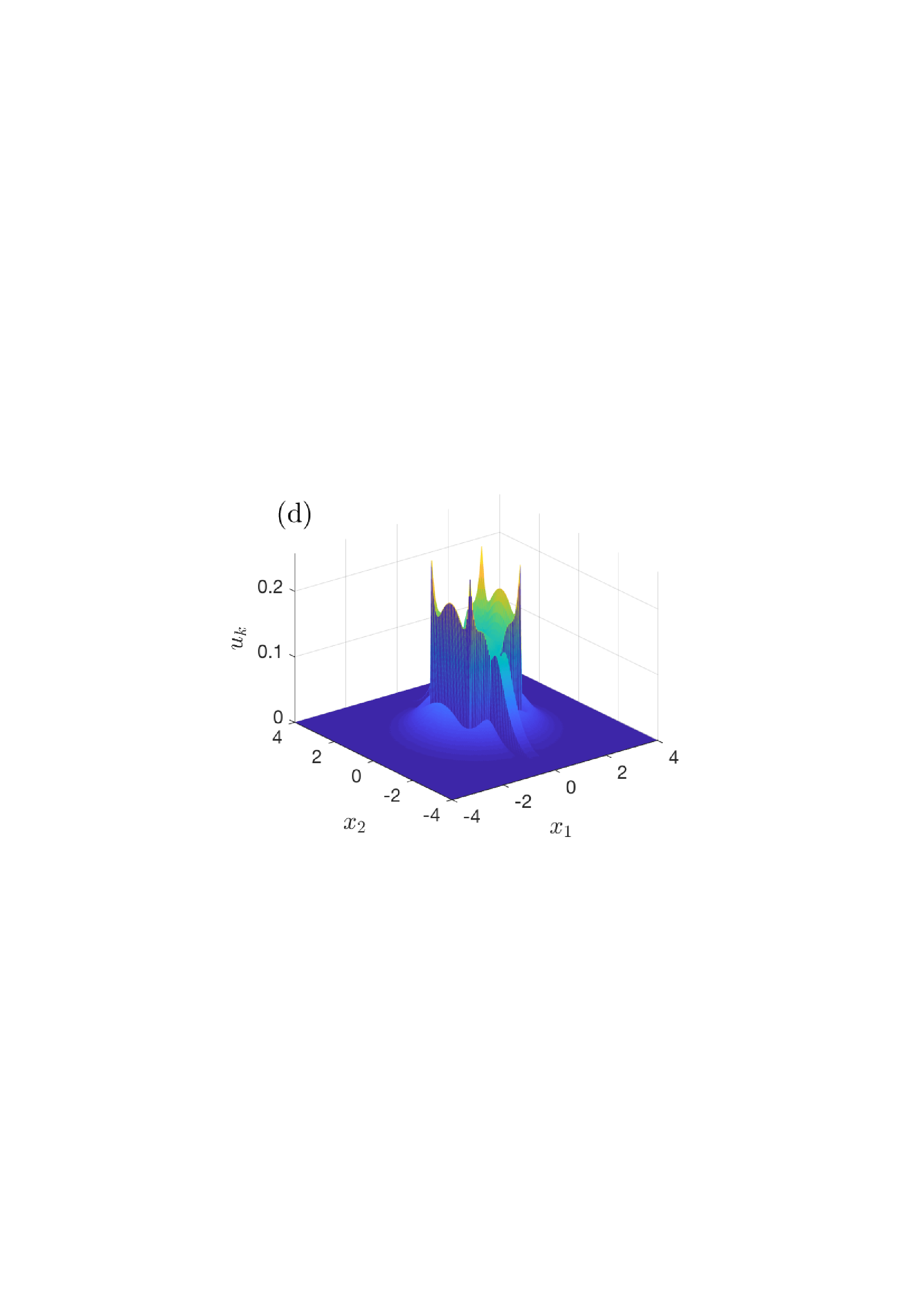}
	\caption{(a) Simple confinement potential $V_k = k + x_1^2 + x_2^2$ outside $\Omega$ and zero otherwise, with $k=20$. (b) Moses confinement potential \eqref{moses_potential} with $k=20$. (c) and (d) Solutions $u_k$ of the linear Fokker--Planck equation at $t = 0.01$ corresponding to the potentials in (a) and (b), respectively. Initial data is $u_0 = C \exp(-|x|/2)$ with $C$ normalisation condition.}
\label{fig:moses_rhoV}
\end{center}
\end{figure}
In Figure \ref{fig:moses_rhoV} we show the two confinement potentials for $k=20$ and their respective solutions at $t = 0.01$. 
In Figure \ref{fig:moses_k} we plot the  value of the solutions at the various times for the different $k$ along $\partial \Omega$, parameterized with arc length $s$ (starting from $x = (1,1)$ and going round clockwise). 
The lines corresponding to the two potentials seem to slowly converge to the same profile as the parameter $k$ increases.

\def \scc {0.7}
\def \scl {.9}
\begin{figure}
\unitlength=1cm
\begin{center}
\vspace{3mm}
\psfrag{a}[][][\scl]{\ (a)} \psfrag{b}[][][\scl]{\ (b)}
\psfrag{c}[][][\scl]{\ (c)} \psfrag{d}[][][\scl]{\ (d)}
\psfrag{rho}[][][\scl][-90]{$u_k$}
\psfrag{s}[][][\scl]{$s$} \psfrag{t}[][][\scl]{$t$}
	\includegraphics[width=.4\textwidth]{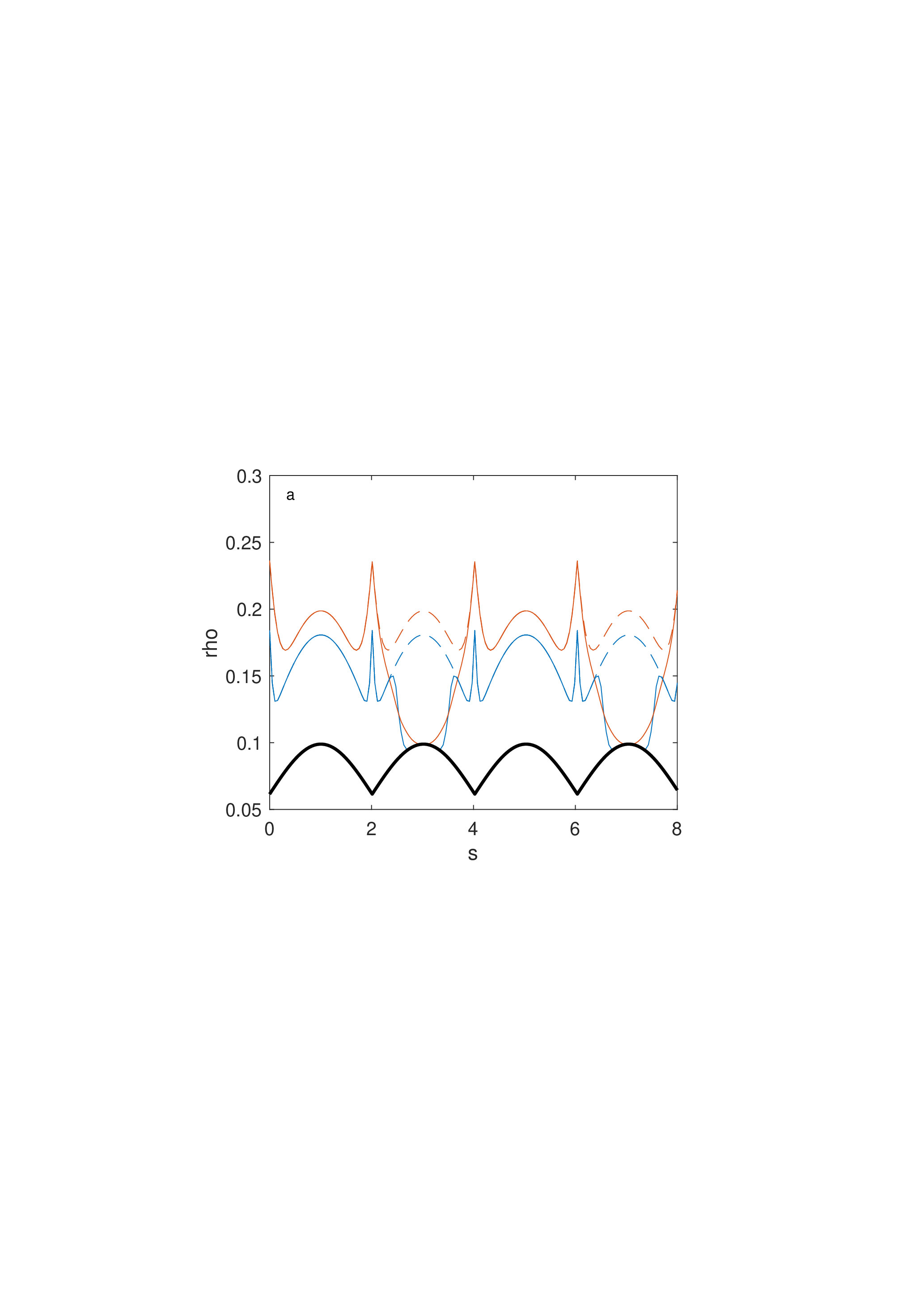} \qquad
	\includegraphics[width=.4\textwidth]{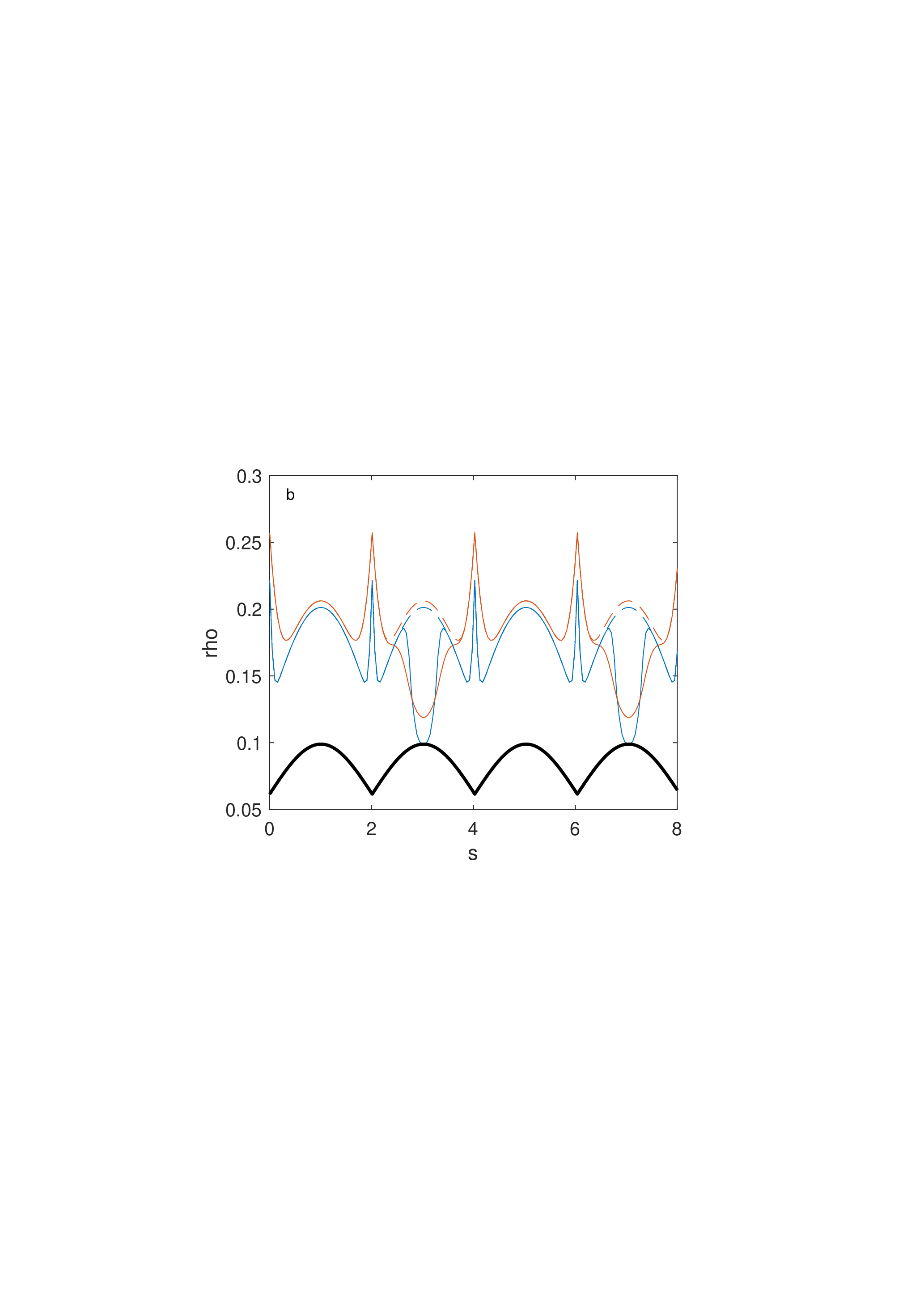} \\ \vspace{.5cm}
		\includegraphics[width=.4\textwidth]{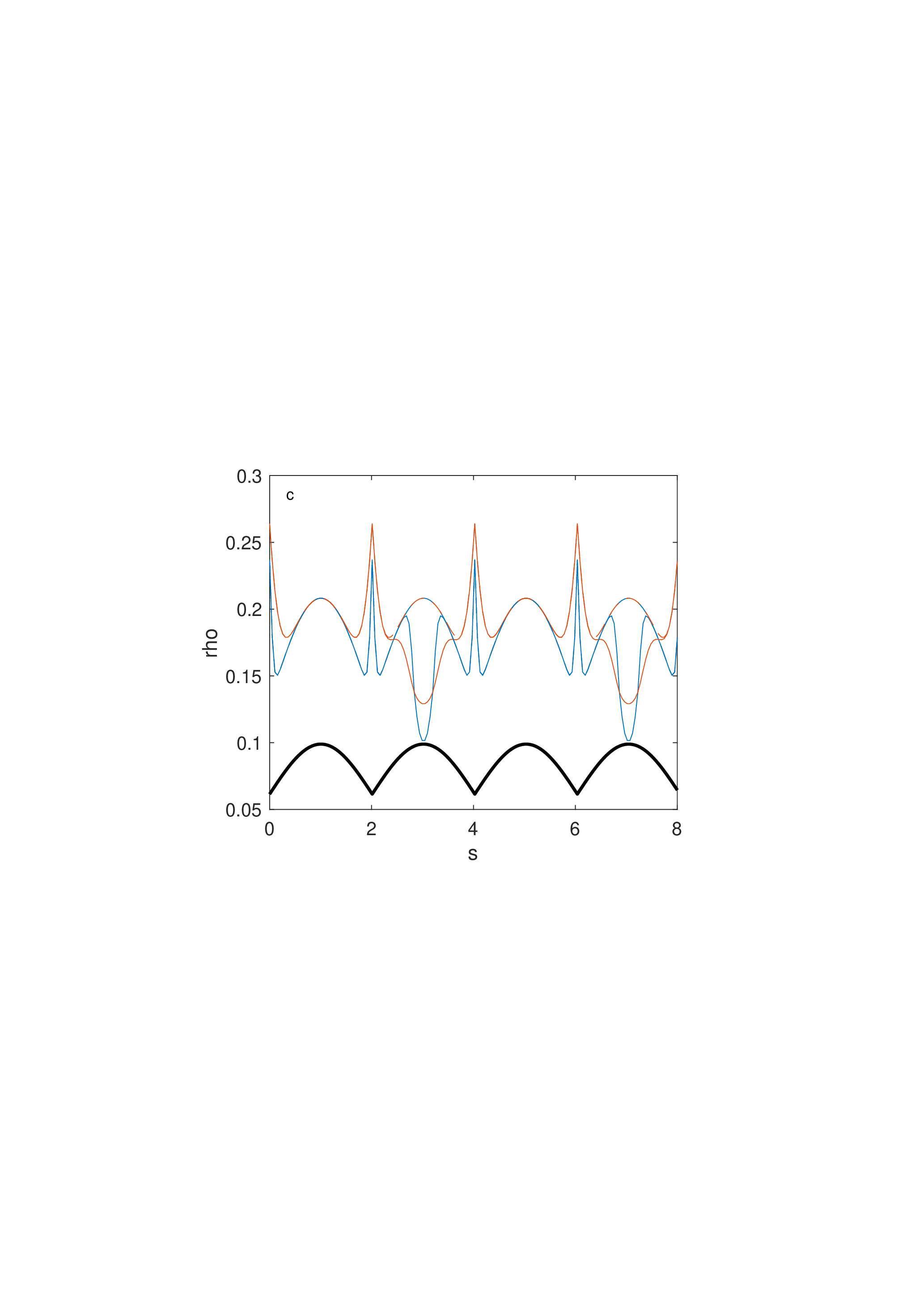} \qquad
	\includegraphics[width=.4\textwidth]{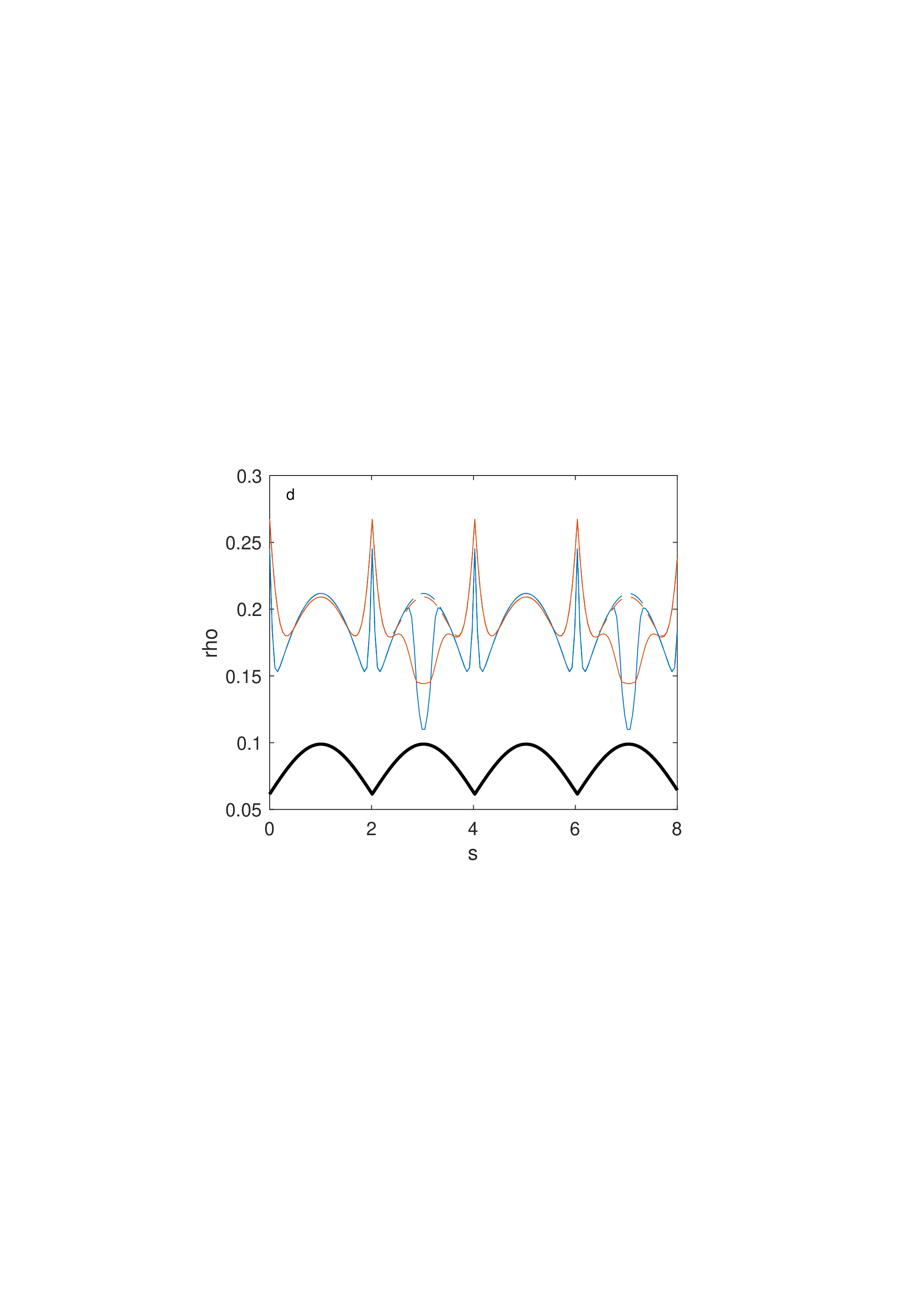}
	\caption{Solution $u_k$ along $\partial \Omega$ at times $t=0.001$ (shown in blue) and $t = 0.01$ (shown in red) using either the simple quadratic potential (dashed lines) or the Moses potential (solid lines) for (a) $k=5$, (b) $k=20$, (c) $k=50$, and (d) $k=100$. The initial condition is the same in all cases and shown as a black thick line. 
	The horizontal axis represents $\partial \Omega$, parameterized with arc length $s$ (starting from $x = (1,1)$ and going round clockwise).}
\label{fig:moses_k}
\end{center}
\end{figure}

\FloatBarrier

In conclusion, our numerical simulations in two dimensions did not give a clear indication in terms of (non-)uniqueness of the limit problem for $k\to\infty$. We formulate the following conjecture:
\begin{conj*}
Let $d\geq 2$ and suppose that the hypotheses of Theorem \ref{thm:main1} 
(resp. Theorem \ref{thm:main2}) 
are satisfied, but assume that $\supp(u_0) \nsubseteq \Omega$. 
Consider a solution $u_k$ of the Cauchy problem \eqref{eq:main} in the sense of Definition \ref{def:weaksol1} 
(resp. Definition \ref{def:weaksol2}) 
with $V=V_k$ satisfying the conditions in Definition \ref{defi:vk}
(resp. Definition \ref{defi:vk-ent}).
Then the sequence $u_k$ does not converge to the solution of a unique limit problem for $k\to\infty$. In fact, as $k\to\infty$, the mass outside $\Omega$ (namely $\int_{\Omega^c} u_0 \d x$) accumulates on the boundary $\de\Omega$, resulting in a singular, measure-valued initial datum of the form $u_0\big|_{\Omega} + \mathcal{M}\big|_{\de\Omega}$, where $\mathcal{M}$ is a non-negative measure concentrated on $\de\Omega$. The measure $\mathcal{M}$ is not uniquely determined and it can vary depending on  the properties of $\Omega$, $u_0$ and the sequence $V_k$.
\end{conj*}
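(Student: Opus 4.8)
\medskip
\noindent\textbf{Proof proposal.}
The plan is to separate the statement into a \emph{soft} part and a \emph{hard} part. The soft part asserts that the surplus mass $m_\star := \int_{\Omega^c} u_0\,\d x$ must concentrate on $\de\Omega$ in the limit $k\to\infty$, so that every subsequential limit $u$ of $u_k$ solves the bounded-domain no-flux problem of Theorem \ref{thm:main1} (resp.\ Theorem \ref{thm:main2}) but with initial datum $u_0\big|_\Omega + \mathcal{M}$, where $\mathcal{M}\ge 0$ is a measure supported on $\de\Omega$ with $\mathcal{M}(\de\Omega) = m_\star$. The hard part is to show that $\mathcal{M}$ is not determined by $(u_0, V_0, \Omega)$ alone; I would do this by constructing two admissible sequences of potentials that produce \emph{different} limit measures $\mathcal{M}$, hence, by well-posedness of the bounded-domain limit problem, two different limit solutions --- so that no single limit problem can describe all admissible confinements.

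For the soft part, observe that none of the uniform-in-$k$ estimates of Sections \ref{sec:L2}--\ref{sec:entropy} uses $\supp(u_0)\subseteq\Omega$ in an essential way; in particular Lemma \ref{lem:uk_to_0} (and, in the non-degenerate settings, the weighted $L^2$ bounds) still give tightness in space and equicontinuity in time away from $t=0$, so one may extract a subsequence $u_k\to u$ and repeat the $\RR^d = \Omega\cup(\Omega_k\setminus\Omega)\cup\Omega_k^c$ splitting verbatim: the interior term converges to the correct bounded-domain weak formulation for test functions vanishing near $t=0$, while both outer terms vanish because $\Omega_k\searrow\Omega$ and $u_k\to 0$ in $L^1(\Omega_k^c)$ uniformly in $t$. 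By the equiintegrability estimate \eqref{equintegrability}, the mass $m_\star$ cannot remain trapped in the shrinking buffer $\Omega_k\setminus\Omega$, so $\int_\Omega u(t)\,\d x = \int_\Omega u_0\,\d x + m_\star$ for every $t>0$; letting $t\to 0^+$ identifies the boundary measure $\mathcal{M} := \lim_{t\to 0^+}\big(u(t)\,\d x - u_0|_\Omega\,\d x\big)$, supported on $\de\Omega$, of total mass $m_\star$. This part should follow routinely from the machinery already in place.

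For the hard part I would take $\Omega = B_1\subset\RR^2$, the linear problem ($\phi(u) = u$, $W = 0$, $V_0\equiv 0$; the nonlinear and degenerate cases being analogous, with a radial $W$), and a radially symmetric $u_0$ whose overhang is the annulus $\{1<|x|<2\}$. With any radially symmetric admissible potential $V_k^{(1)}$ the solution $u_k^{(1)}$ stays radial --- by uniqueness of solutions, or in the degenerate case by a symmetrization argument --- hence its limit delivers mass uniformly in angle and $\mathcal{M}^{(1)}$ is the uniform measure on $\de B_1$ of total mass $m_\star$. For the second sequence I would use a smoothed, $C^1$, admissible version of the channel (``Moses'') potential \eqref{moses_potential}: $V_k^{(2)}$ is of order one (say comparable to $|x|^2$) on a thin channel $\{1<|x|<2,\ |x_2|<\delta_k\}$ with $\delta_k\to 0$, and is $\ge k$ on the rest of the overhang annulus. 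One then needs a comparison/barrier argument --- using exponential sub- and supersolutions in the confinement weight as in Lemma \ref{lem:energy} --- showing that a fixed positive fraction of $m_\star$ is funnelled through the channel and delivered to shrinking neighbourhoods of the two channel mouths $(\pm 1,0)$, with no time to equidistribute along $\de B_1$ before entering $\Omega$. This produces an $\mathcal{M}^{(2)}$ carrying atoms at $(\pm 1,0)$, so $\mathcal{M}^{(2)}\neq\mathcal{M}^{(1)}$, which is the claimed non-uniqueness.

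The main obstacle is precisely this last quantitative step. The transport of $m_\star$ from $\Omega_k\setminus\Omega$ and $\Omega_k^c$ onto $\de\Omega$ takes place on a time scale shrinking to zero as $k\to\infty$, so one has to carry out a boundary-layer (quasi-static) analysis in the thin annulus: rescale time so the inward drift is order one, solve the effective lower-dimensional transport problem across the layer, and show that the induced push-forward onto $\de\Omega$ retains the angular signature of $V_k$ in the layer --- and, moreover, that the width $\delta_k$ of the channel can be tuned so that an order-one mass actually survives the passage. The bounds that persist in the limit $k\to\infty$ are only $L^1$/free-energy estimates, which do not control the angular marginal of $u_k$; supplying the missing compactness (a uniform modulus of continuity for the angular distribution, or a matched-asymptotics description of the layer stable as $k\to\infty$) is the crux, and is the reason the statement is offered as a conjecture rather than a theorem.
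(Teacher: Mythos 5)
There is no proof in the paper for you to match: the statement is presented explicitly \emph{as a conjecture}, motivated by the numerical exploration in Section~\ref{sec:numerics}, and the authors note openly that their 2D simulations ``did not give a clear indication in terms of (non-)uniqueness'' --- indeed, Figure~\ref{fig:moses_k} and the surrounding discussion suggest that the solutions for the simple quadratic and the ``Moses'' channel potential may be converging to the \emph{same} boundary profile as $k\to\infty$. So your proposal is being compared to nothing, and you correctly flag at the end that the missing compactness is precisely why the authors stop short of a theorem.

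That said, your ``soft'' part is not as routine as you claim. You assert that the uniform-in-$k$ estimates of Sections~\ref{sec:L2}--\ref{sec:entropy} do not use $\supp(u_0)\subseteq\Omega$ ``in an essential way,'' but they do. In Lemma~\ref{lem:l2est} the right-hand side is $\int_\Omega u_0^2\,e^{V_0}\,\d x$ precisely because $u_0$ vanishes where $V_k$ is large; with overhanging mass this becomes $\int_{\RR^d}u_0^2\,e^{V_k}\,\d x\gtrsim e^k\!\int_{\Omega_k^c}u_0^2\,\d x\to\infty$. Similarly in Lemma~\ref{lem:uk_to_0} the constant $C_0=2E(u_0)+\cdots$ inherits the term $\int_{\RR^d}u_0\,V_k\,\d x\gtrsim k\,m_\star\to\infty$. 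So the estimates blow up at $t=0$ as $k\to\infty$, and you cannot simply ``repeat the splitting verbatim'' even for test functions supported away from $t=0$, since the entropy--dissipation bound you would like to use is $E[u_k(t)]+\int_0^t D\le E_k[u_0]$, whose right-hand side is uncontrolled. A genuinely new \emph{a priori} bound on $E[u_k(t)]$ for $t\geq\tau>0$, uniform in $k$, is required; establishing it already calls for the kind of quasi-static boundary-layer argument you reserve for the ``hard'' part. Also be aware that your proposed counterexample via a Moses-type channel is exactly the test the authors ran numerically, and their results, if anything, point towards insensitivity rather than towards distinct limiting measures; so the non-uniqueness direction, while the authors' stated conjecture, is not clearly supported even at the experimental level.
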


%
%
%
%
\section*{Acknowledgements}
L.~Alasio was partially supported by the EPSRC grant number EP/L015811/1. M.~Bruna was funded by a  Junior Research Fellowship from St John's College, Oxford. J.~A.~Carrillo was partially supported by the EPSRC grant number EP/P031587/1.

%
%
%
%

\end{document}